\newtheorem{theorem}{Theorem}[section]
\newtheorem{lemma}[theorem]{Lemma}
\newtheorem*{lemma*}{Lemma}
\theoremstyle{definition}
\newtheorem{definition}[theorem]{Definition}
\newtheorem{example}[theorem]{Example}
\newtheorem{proclaim}[theorem]{\the\prt}
\theoremstyle{remark}
\newtoks\prt
\def\eqn#1$$#2$${\begin{equation}\label#1#2\end{equation}}
\newtoks\by
\newtoks\paper
\newtoks\book
\newtoks\jour
\newtoks\yr
\newtoks\pages
\newtoks\vol
\newtoks\publ
\def\ota{{\hbox\vol{???}}}
\def\cLear{\by=\ota\paper=\ota\book=\ota\jour=\ota\yr=\ota
\pages=\ota\vol=\ota\publ=\ota}
\def\endpaper{\the\by, {\the\paper},
\textit{\the\jour} \textbf{\the\vol} (\the\yr), \the\pages.\cLear}
\def\endbook{\the\by, \textit{\the\book}, \the\publ.\cLear}
\def\endprep{\the\by, \textit{\the\paper}, \the\jour.\cLear}
\def\endyearprep{\the\by, \textit{\the\paper}, \the\jour, (\the\yr).\cLear}
\def\name#1#2{#2 #1}
\numberwithin{equation}{section}
\renewcommand{\bar}[1]{\overline{#1}}
\newcommand{\yy}{\mathbb{Y}}
\newcommand{\rr}{\mathbb{R}}
\newcommand{\abs}[1]{\lvert#1\rvert}
\newcommand{\C}{\mathbb{C}}
\newcommand{\Q}{\mathbb{Q}}
\newcommand{\R}{\mathbb{R}}
\newcommand{\X}{\mathbb{X}}
\newcommand{\Y}{\mathbb{Y}}
\newcommand{\N}{\mathbb{N}}
\newcommand{\dtext}{\textnormal d}
\newcommand{\onto}{\xrightarrow[]{{}_{\!\!\textnormal{onto\,\,}\!\!}}}
\def\en{\mathbb N}
\def\er{\mathbb R}
\DeclareMathOperator{\diam}{diam}
\DeclareMathOperator{\loc}{loc}
\def\leq{\leqslant}
\def\geq{\geqslant}
\def\le{\leqslant}
\def\ge{\geqslant}
\def\XXint#1#2#3{{\setbox0=\hbox{$#1{#2#3}{\int}$}\vcenter{\hbox{$#2#3$}}\kern-.5\wd0}}
\def\XXiint#1#2#3{{\setbox0=\hbox{$#1{#2#3}{\iint}$}\vcenter{\hbox{$#2#3$}}\kern-.5\wd0}}
\begin{document}
\title[3D--Sobolev homeomorphic extensions]{Sobolev homeomorphic extensions\\ from two to three dimensions}

\author[S. Hencl]{Stanislav Hencl}
\address{Charles University, Department of Mathematical Analysis Sokolovsk\'a 83, 186 00 Prague 8, Czech Republic}
\email{hencl@karlin.mff.cuni.cz}

\author[A. Koski]{Aleksis Koski}
\address{Department of Mathematics and Statistics, P.O. Box 68 (Pietari Kalmin katu 5), FI-00014 University of Helsinki, Finland}
\email{aleksis.koski@helsinki.fi}

\author[J. Onninen]{Jani Onninen}
\address{Department of Mathematics, Syracuse University, Syracuse,
NY 13244, USA and  Department of Mathematics and Statistics, P.O.Box 35 (MaD) FI-40014 University of Jyv\"askyl\"a, Finland
}
\email{jkonnine@syr.edu}
\thanks{S. Hencl was supported by the grant GA\v CR P201/21-01976S  A. Koski was supported by the Academy of Finland grant number 307023.}

\subjclass[2010]{Primary 46E35, 58E20}


\keywords{Sobolev homeomorphisms, Sobolev extensions, $L^1$-Beurling-Ahlfors extension}

\begin{abstract} We study the basic question of characterizing which boundary homeomorphisms of the unit sphere can be extended to a Sobolev homeomorphism of the interior in 3D space.  While the planar variants of this problem are well-understood, completely new and direct ways of constructing an extension are required in 3D. We prove, among other things,  that a Sobolev homeomorphism $\varphi \colon \R^2 \onto \R^2$ in $W_{\loc}^{1,p} (\R^2 , \R^2)$ for some $p\in [1,\infty )$ admits a homeomorphic extension $h \colon \R^3 \onto \R^3$ in $W_{\loc}^{1,q} (\R^3, \R^3) $ for $1\le q < \frac{3}{2}p$. Such an extension result  is  nearly sharp,
as the bound $q=\frac{3}{2}p$ cannot be improved due to the H\"older embedding. The case $q=3$  gains an additional interest as it also provides  an $L^1$-variant of the celebrated  Beurling-Ahlfors extension result.
\end{abstract}

\maketitle
\section{Introduction}
Throughout this paper $\mathbb B$ denotes the unit ball in $\R^3$ and $\mathbb S = \partial \mathbb B$.
We study the following {\it 3D--Sobolev homeomorphic extension problem}.
\vskip0.2cm
\begin{qu}  {\it Suppose that  a homeomorphism $\varphi \colon  \mathbb S  \onto  \mathbb S$ admits a continuous extension to $\mathbb B$ in the Sobolev space $W^{1,q} (\mathbb B , \R^3)$ for some $q\in [1, \infty)$. Does the map $\varphi$ also admit a homeomorphic extension to $\mathbb B$ of  class  $W^{1,q} (\mathbb B , \R^3)$?
}
\end{qu}
\vskip0.2cm

Every boundary homeomorphism $\varphi \colon  \mathbb S \onto  \mathbb S $ extends as a homeomorphism to the ball $\mathbb B$.
On the other hand, according to a famous result of  Gagliardo~\cite{Ga},  for $1<q<\infty$,   the  mapping $\varphi$ is the Sobolev trace of some (possibly non-homeomorphic) mapping in $W^{1,q} (\mathbb B , \R^3)$  if and only if  it belongs to the fractional Sobolev space $W^{1-\frac{1}{q}, q} (\mathbb S , \R^3)$; that is,
\begin{equation}\label{eq:contextensioncondition}
\int_{ \mathbb S} \int_{ \mathbb S} \frac{\abs{\varphi (x) - \varphi (y)}^q}{\abs{x-y}^{q+1}} \, \dtext x \, \dtext y  < \infty\, . 
\end{equation}
Note that the 2D  result~\cite{Ve} that  every boundary homeomorphism  $\varphi \colon \partial \mathbb D \onto \partial \mathbb D$  extends as a $W^{1,q}$-homeomorphism, $q<2$, to the unit disk $\mathbb D \subset \R^2$ has no counterpart in higher dimensions. Indeed, there are boundary homeomorphisms from $\mathbb S$ onto itself that do not even admit a continuous Sobolev extension in $W^{1,q} (\mathbb B , \R^3)$ for any $q>1$, see Example~\ref{ex:1}.

First we give a discrete variant of~\eqref{eq:contextensioncondition}; that is, we  characterize the  boundary homeomorphisms that admit a Sobolev extension in $W^{1,q} (\mathbb B , \R^3)$ when $q>2$.  
\begin{theorem}\label{thm:discretechara}
Let $\varphi \colon  \mathbb S \onto  \mathbb S$ be a homeomorphism and $q \in (2, \infty)$. Suppose that $\tilde{\mathcal{D}}_k$  is a  dyadic decomposition of $ \mathbb S$ into closed bi-Lipschitz squares of diameter $c2^{-k}$.  Then $\varphi$ satisfies~\eqref{eq:contextensioncondition} if and only if
\begin{equation}\label{eq:discretecontextension}
\sum_{k=1}^\infty   2^{k (q-3)}  \sum_{\tilde{Q}_j \in \tilde{\mathcal{D}}_k}   \big[ \diam \varphi (\tilde{Q}_j) \big]^q < \infty \,  .
\end{equation}
\end{theorem}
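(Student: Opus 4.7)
The plan is to prove the equivalence of \eqref{eq:contextensioncondition} and \eqref{eq:discretecontextension} by matching the Gagliardo integral and the dyadic sum scale-by-scale, viewing \eqref{eq:discretecontextension} as a dyadic characterization of the fractional Sobolev (Besov) seminorm on $\mathbb{S}$.

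For the implication $\eqref{eq:discretecontextension}\Rightarrow\eqref{eq:contextensioncondition}$, I would split the double integral along dyadic annuli $A_k=\{(x,y)\in\mathbb{S}\times\mathbb{S}:2^{-k}\le|x-y|<2^{-k+1}\}$. On $A_k$ the kernel $|x-y|^{-(q+1)}$ is comparable to $2^{k(q+1)}$, and any pair $(x,y)\in A_k$ lies in some enlarged dyadic square $\tilde{Q}_j^{\,*}\in\tilde{\mathcal D}_k$ (the union of $\tilde{Q}_j$ with its $O(1)$ neighbors), whence $|\varphi(x)-\varphi(y)|\le\diam\varphi(\tilde{Q}_j^{\,*})$. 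Since the measure of pairs in $A_k$ with first coordinate in a fixed square is $\sim 2^{-4k}$, this gives
\[
\iint_{A_k}\frac{|\varphi(x)-\varphi(y)|^q}{|x-y|^{q+1}}\,dx\,dy\le C\,2^{k(q-3)}\sum_{\tilde{Q}_j\in\tilde{\mathcal D}_k}[\diam\varphi(\tilde{Q}_j^{\,*})]^q.
\]
Connectedness of $\tilde{Q}_j^{\,*}$ yields $[\diam\varphi(\tilde{Q}_j^{\,*})]^q\le C\sum_{\tilde{Q}_i\subset\tilde{Q}_j^{\,*}}[\diam\varphi(\tilde{Q}_i)]^q$, and bounded overlap followed by summation in $k$ completes this direction.

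For the harder converse $\eqref{eq:contextensioncondition}\Rightarrow\eqref{eq:discretecontextension}$, I would invoke the Gagliardo trace theorem to produce an extension $h\in W^{1,q}(\mathbb{B},\R^3)$ of $\varphi$ with $\|\nabla h\|_{L^q(\mathbb{B})}^q$ controlled by the Gagliardo integral. For each $\tilde{Q}_j\in\tilde{\mathcal D}_k$, attach a 3D ball $B_j\subset\overline{\mathbb{B}}$ of radius $\sim 2^{-k}$ with $\tilde{Q}_j\subset\partial B_j\cap\mathbb{S}$. When $q>3$, Sobolev--Morrey on $B_j$ gives $D_j\le\diam h(B_j)\le C\,2^{-k(1-3/q)}\|\nabla h\|_{L^q(B_j^{\,*})}$; raising to the $q$-th power and multiplying by $2^{k(q-3)}$ absorbs the scale, and summing over $j$ (by finite overlap of $B_j^{\,*}$) and $k$ (by a Whitney-type partition of $\mathbb{B}$) yields $\sum_{k,j}2^{k(q-3)}D_j^q\le C\|\nabla h\|_{L^q(\mathbb{B})}^q$, which by the Gagliardo trace bound is controlled by the continuous integral.

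The main obstacle is the range $q\in(2,3]$, where 3D Sobolev--Morrey fails, $h$ need not be continuous on $\mathbb{B}$, and the Hölder-type diameter bound on $D_j$ above is unavailable. Here I expect the argument to exploit the homeomorphism nature of $\varphi$---for instance that the regions $\{\varphi(\tilde{Q}_j)\}_j$ form an almost-disjoint covering of $\mathbb{S}$ with $\sum_j|\varphi(\tilde{Q}_j)|=|\mathbb{S}|$---in combination with an isoperimetric or modulus-of-continuity estimate converting local fractional Poincaré-type bounds on the $L^q$-oscillation of $\varphi$ into the genuine diameter $D_j$. Navigating this low-regularity range is where I see the technical heart of the proof.
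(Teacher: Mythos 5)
Your ``sum $\Rightarrow$ integral'' half is correct and is essentially the paper's argument: the paper organizes the pairs $(x,y)$ by same-generation dyadic squares at dyadic distance at most $2$ rather than by annuli $A_k$, but the kernel bound $\approx 2^{k(q+1)}$, the measure count $\approx 2^{-4k}$, the chain-of-diameters estimate and the bounded-overlap summation are the same computation, so this direction is fine.

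The converse, however, has a genuine gap exactly where you flag it, and that gap is the substance of the theorem: the hypothesis is only $q>2$, so the range $2<q\le 3$ cannot be set aside, and your proposed fallback for it (almost-disjointness of the images $\varphi(\tilde Q_j)$ plus an isoperimetric/Poincar\'e device) is a hope rather than an argument --- and in fact is not how injectivity enters. The paper's route is: take the Gagliardo extension $f\in W^{1,q}$ of $\varphi$ (after a bi-Lipschitz flattening one may work in a half-space over a square), fix an \emph{intermediate exponent} $\eta\in(2,q)$, and use the Sobolev embedding on $2$-dimensional spheres: for a.e.\ $s$, $\diam f(\partial \mathbb{B}^3_s)\le C s^{1-2/\eta}\bigl(\int_{\partial \mathbb{B}^3_s}|Df|^\eta\bigr)^{1/\eta}$. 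Because the spheres are two-dimensional this needs only $\eta>2$, so no 3D Morrey embedding is required; this is precisely how the paper crosses $q\le 3$. Injectivity is used only through the planar identity $\diam\varphi(\mathbb{B}^2_s)=\diam\varphi(\partial\mathbb{B}^2_s)\le\diam f(\partial\mathbb{B}^3_s)$ (the image of a disk under a planar homeomorphism is the Jordan domain bounded by the image of its boundary circle). Averaging over $s\in(r,2r)$ converts the sphere integral into a solid one and yields $\diam\varphi(\tilde Q_{k,j})\le C2^{-k(1-3/\eta)}\bigl(\int_{\mathbb{B}^3_{c2^{-k}}}|Df|^\eta\bigr)^{1/\eta}$. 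The second missing ingredient is the summation over scales: even in your $q>3$ Morrey sketch, finite overlap only handles a fixed generation $k$, while the enlarged balls attached to boundary squares recur at every scale, so the naive sum over $k$ produces a logarithmic factor $\log\frac{1}{\dist(x,\mathbb S)}$ rather than $\|Df\|_{L^q}^q$. The paper avoids this by rewriting the local average as $\diam\varphi(\tilde Q_{k,j})\le C2^{-k}\bigl[{\bf M}_c|Df|^\eta(x)\bigr]^{1/\eta}$ for every $x$ in the Whitney box $\tilde Q_{k,j}\times[2^{-k+1},2^{-k+2}]$, integrating over these pairwise disjoint boxes, and invoking the Hardy--Littlewood maximal theorem in $L^{q/\eta}$ --- which is the second reason $\eta$ must be taken strictly below $q$ (with $\eta=q$ the maximal function is only bounded on $L^1$, where it fails). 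So the heart of the proof is the pair ``spherical oscillation estimate with $2<\eta<q$ plus maximal-function/Whitney summation,'' neither of which appears in your proposal.
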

For the precise definition of $\tilde{\mathcal{D}}_k$ we refer to Definition~\ref{def:dyadicsphere}.

The corresponding 2D--Sobolev homeomorphic extension problem~\cite{KOext} has an easy answer thanks to the available analytic methods of  constructing 2D-Sobolev homeomorphisms. Indeed, let $\mathbb D$ be the unit disk in $\mathbb R^2$ and $q\in [1, \infty)$ then a boundary homeomorphism $\varphi \colon \partial \mathbb D \onto \partial \mathbb D$ admits a homeomorphic extension to $\overline{\mathbb D}$ in $W^{1,q} (\mathbb D , \R^2)$ if and only if it admits a continuous extension to  $\overline{\mathbb D}$ in $W^{1,q} (\mathbb D , \R^2)$. This follows from the Rad\'o-Kneser-Choquet (RKC) theorem~\cite{Dub} for  $q\le 2$.  The RKC theorem asserts that a
homeomorphic boundary value $\varphi \colon \partial \mathbb D \onto \partial \mathbb D$  admits a homeomorphic harmonic extension of $\mathbb D$. The harmonic extension  belongs to $W^{1,q} (\mathbb D, \R^2)$ for all $q<2$ and to  $W^{1,2} (\mathbb D, \R^2)$ exactly when is in the trace space of   $W^{1,2} (\mathbb D, \R^2)$.  Similarly the $q$-harmonic variants of the RKC theorem~\cite{AS} solve the 2D extension problem for $q > 2$. An analogous approach  fails in higher dimensions. Indeed, Laugesen~\cite{La} constructed a self-homeomorphism of the sphere  $\mathbb S$ in $\R^3 $ whose harmonic extension
to the ball $\mathbb B$ is not injective. Thus, the 3D extension problem requires new methods of constructing Sobolev homeomorphisms.

Our main result tells us that the searched homeomorphic extension exists if the boundary homeomorphism satisfies a strengthened version of the condition~\eqref{eq:discretecontextension}.
\begin{theorem}\label{thm:mainsum}
Let $q \in (1, \infty)$.  Suppose that $\tilde{\mathcal{D}}_k$  is a dyadic decomposition of $ \mathbb S$ into closed bi-Lipschitz squares of diameter $c2^{-k}$.   If  a homeomorphism $\varphi \colon  \mathbb S \onto  \mathbb S$  satisfies 
\begin{equation}\label{eq:discretehomeoextension}
\sum_{k=1}^\infty   2^{k (q-3)}  \sum_{\tilde{Q}_j \in  \tilde{\mathcal{D}}_k}   \big[ \mathcal H^1 \big( \varphi (\partial \tilde{Q}_j) \big) \big]^q < \infty \,  ,
\end{equation}
then it admits a homeomorphic extension $h \colon \overline{\mathbb B} \onto \overline{\mathbb B}$ in $W^{1,q} (\mathbb B , \R^3)$.
\end{theorem}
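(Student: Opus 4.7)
The plan is to build $h$ by piecewise affine interpolation over a decomposition of $\mathbb{B}$ subordinate to $\{\tilde{\mathcal{D}}_k\}_k$. First partition $\mathbb{B}$ into nested spherical shells $A_k = \{1-c2^{-k+1} \le \abs{x} \le 1-c2^{-k}\}$ for $k\ge 1$, together with a small central ball. Subdivide each shell $A_k$ into radial prismatic cells $C_{j,k}$ sitting above the squares $\tilde Q_j \in \tilde{\mathcal{D}}_k$, each of diameter $\approx 2^{-k}$. For every $\tilde Q_j$ attach an interior ``lift vertex'' $p_{j,k}$ at radius $1-c2^{-k}$ roughly above the centre of $\tilde Q_j$, and triangulate the slab between consecutive levels into tetrahedra whose vertices come from the lift points at levels $k$ and $k+1$ and from the corresponding corners on $\mathbb{S}$ (inserting dyadic midpoint vertices to bridge the four-fold refinement).

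Next, define $h$ inductively on these vertices. Set $h = \varphi$ on $\mathbb{S}$. At each level $k$ choose the target $h(p_{j,k}) \in \mathbb{B}$ to lie at interior depth $\approx 2^{-k}$ below a carefully chosen point of $\varphi(\tilde Q_j)$ (essentially the inward normal push of an interior point of that spherical region), and extend $h$ affinely on each tetrahedron. Since $\varphi(\partial \tilde Q_j)$ is a Jordan curve on $\mathbb{S}$ bounding the topological disk $\varphi(\tilde Q_j)$, the isoperimetric inequality on the sphere gives $\diam \varphi(\tilde Q_j) \le \tfrac12 \mathcal{H}^1(\varphi(\partial \tilde Q_j))$; hence the image of every cell $C_{j,k}$ has diameter of order $\mathcal{H}^1(\varphi(\partial \tilde Q_j)) + 2^{-k}$, which is precisely the quantity controlled by \eqref{eq:discretehomeoextension}.

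The principal obstacle -- and the reason 3D demands a direct construction rather than an RKC-type appeal -- is to verify that this piecewise affine map is globally \emph{injective}. The lift targets must be chosen so that, at every scale $k$: (i) the image vertices $\{h(p_{j,k})\}_j$ form a polyhedral topological sphere in $\mathbb{B}$ that separates the previously constructed interior from the boundary; (ii) the level-$(k{+}1)$ polyhedral surface lies strictly inside the level-$k$ one with the correct combinatorial refinement; and (iii) adjacent tetrahedra share only their common face in the image. This must be arranged inductively. The crucial input is the perimeter control: the length bound $\mathcal{H}^1(\varphi(\partial \tilde Q_j))$ forces $\varphi(\tilde Q_j)$ to be a genuinely \emph{small} Jordan region and not, for instance, a long thin strip wrapping around $\mathbb{S}$, so that an inward offset of depth $2^{-k}$ has enough room to produce a new polyhedral layer disjoint from the one above and combinatorially consistent with it. Carrying out this geometric bookkeeping uniformly across all scales is the main technical work; diameter control alone, as in~\eqref{eq:discretecontextension}, would be too weak to ensure injectivity.

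Granted injectivity, the Sobolev bound is a routine cell-by-cell calculation. On each tetrahedron $T\subset C_{j,k}$ the affine map $h$ has
$$
\abs{Dh} \;\le\; C\,\frac{\diam h(T)}{2^{-k}} \;\le\; C\,\frac{\mathcal{H}^1(\varphi(\partial \tilde Q_j)) + 2^{-k}}{2^{-k}},
$$
so integrating over $C_{j,k}$, whose volume is $\approx (2^{-k})^3$, yields
$$
\int_{C_{j,k}} \abs{Dh}^q\,\dtext x \;\le\; C\, 2^{k(q-3)}\big[\mathcal{H}^1(\varphi(\partial \tilde Q_j))\big]^q + C\,(2^{-k})^3.
$$
Summing over the $\approx 4^k$ cells at level $k$ and over $k\ge 1$, the first term is finite by hypothesis~\eqref{eq:discretehomeoextension}, while the second contributes $\sum_k 4^k\cdot (2^{-k})^3 = \sum_k 2^{-k} < \infty$. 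Hence $h \in W^{1,q}(\mathbb{B},\R^3)$. Continuity of $h$ up to $\mathbb{S}$ with trace $\varphi$ is immediate, as $\diam h(C_{j,k}) \to 0$ uniformly in $j$ (again by the isoperimetric bound), completing the construction of the homeomorphic extension.
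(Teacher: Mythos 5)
Your overall strategy---decompose $\mathbb{B}$ into cells subordinate to the dyadic scales, map each cell to an image set whose diameter is controlled by $\mathcal H^1(\varphi(\partial \tilde Q_j))$, and sum the resulting per-cell Sobolev norms---captures the correct energy bookkeeping and is in that respect parallel to the paper. The reduction of \eqref{eq:discretehomeoextension} to a per-cell Lipschitz constant of order $2^k\,\mathcal H^1(\varphi(\partial \tilde Q_j))$, and the elementary bound $\diam\varphi(\tilde Q_j)\le\tfrac12\mathcal H^1(\varphi(\partial\tilde Q_j))$, are both used in the paper in essentially the form you state.

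However, the extension you propose is a piecewise affine triangulation with lift vertices pushed inward from each $\varphi(\tilde Q_j)$, and the step you rightly identify as the crux---global injectivity---is left unresolved. You argue that injectivity can be arranged because the perimeter bound ``forces $\varphi(\tilde Q_j)$ to be a genuinely small Jordan region and not, for instance, a long thin strip.'' This is not correct: a Jordan curve of length $L$ can bound a region of diameter comparable to $L$ which is nonetheless an arbitrarily thin strip, and \eqref{eq:discretehomeoextension} gives no lower bound on the width of $\varphi(\tilde Q_j)$ nor any control on the mutual positions, nesting, or interlocking of the image cells at a given level. Consequently, pushing each lift vertex inward by depth $\approx2^{-k}$ need not produce an embedded polyhedral sphere at level $k$, let alone one that is strictly nested inside the level-$(k{-}1)$ surface with the correct combinatorial refinement; neighbouring tetrahedra can and generically will overlap. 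This is exactly the obstruction that defeats a direct triangulation and is the reason the paper takes an entirely different route: it first replaces the image grid by a piecewise linear approximation with controlled lengths and compatible intersections (Lemma~\ref{lem:piecewiselinear}), then slices the extension domain into horizontal planes and maps each slice by a two-dimensional \emph{shortest curve extension} whose Lipschitz constant is controlled by the boundary data (Lemma~\ref{lem:shortestcurve}), connects consecutive dyadic levels by carefully built homotopies of piecewise linear curves (Lemma~\ref{lem:changeboundary} and Section~\ref{sec:3d}), and only at the end perturbs this monotone map into a homeomorphism by an explicit small modification that preserves all the Lipschitz estimates (Section~\ref{sec:injective}). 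That machinery is precisely what stands in for the inductive ``geometric bookkeeping'' you describe but do not carry out, and without it your argument has a genuine gap at the step the theorem actually requires.
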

Here $\mathcal H^1$  stands for  $1$-dimensional Hausdorff measure and so $\mathcal H^1 \big( \varphi (\partial \tilde{Q}_j) \big) $ measures the length of the curve $\varphi (\partial \tilde{Q}_j)$.

For a   Sobolev homeomorphism  $\varphi \colon \mathbb S \onto \mathbb S$  the trivial radial extension $h(x)= \abs{x} \varphi (x)$ produces a self homeomorphism of $\overline{\mathbb B}$ which has the same Sobolev regularity as the given boundary map $\varphi$. Clearly, such an extension is far from being optimal. Our next result, however, nearly characterizers the first order Sobolev spaces that admit a Sobolev homeomorphic extension to $\mathbb B$.
\begin{theorem}\label{thm:mainsobo}
Let $\varphi \colon \mathbb S \onto \mathbb S$ be a homeomorphism in $W^{1, p} (\mathbb S , \R^3)$ for some $p \in [1, \infty)$. Then $\varphi$ admits a homeomorphic extension $h \colon \overline{\mathbb B} \onto \overline{\mathbb B}$ in $W^{1,q} (\mathbb B , \R^3)$ for $1 \le q < \frac{3}{2} p$. 
\end{theorem}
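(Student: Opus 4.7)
The strategy is to verify the summability condition \eqref{eq:discretehomeoextension} of Theorem~\ref{thm:mainsum} for a suitable dyadic decomposition of $\mathbb S$, after which the desired extension $h$ is produced by that theorem. I would fix a bi-Lipschitz model for the spherical dyadic grids and introduce a continuous family of shifted decompositions $\tilde{\mathcal{D}}_k^{(t)}$, parametrized, say, by rotations of $\mathbb S$. For a.e.\ shift $t$ the map $\varphi$ is absolutely continuous along every boundary curve of the resulting grid (Fubini on $W^{1,p}$), and hence
\[
\mathcal H^1\bigl(\varphi(\partial \tilde{Q}_j^{(t)})\bigr) \le \int_{\partial \tilde{Q}_j^{(t)}} |\nabla \varphi| \, d\mathcal H^1.
\]

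Because each boundary curve has length $\asymp 2^{-k}$, H\"older's inequality upgrades this to
\[
\bigl[\mathcal H^1\bigl(\varphi(\partial \tilde{Q}_j^{(t)})\bigr)\bigr]^q \le C\, 2^{-kq(1-1/p)}\Bigl( \int_{\partial \tilde{Q}_j^{(t)}} |\nabla\varphi|^p \, d\mathcal H^1 \Bigr)^{q/p}.
\]
Summing over the $\asymp 2^{2k}$ squares at level $k$ collapses the right-hand side to a single integral over the 1-skeleton $\mathrm{grid}_k^{(t)}$ of the grid. In the main range $q\ge p$ one uses the super-additivity $\sum_j a_j^{q/p}\le \bigl(\sum_j a_j\bigr)^{q/p}$; in the sub-range $q<p$ discrete H\"older against the cardinality of squares plays the analogous role. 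Either way, with $I_k(t) := \int_{\mathrm{grid}_k^{(t)}} |\nabla \varphi|^p \, d\mathcal H^1$, one arrives at
\[
\sum_j \bigl[\mathcal H^1\bigl(\varphi(\partial \tilde{Q}_j^{(t)})\bigr)\bigr]^q \le C\, 2^{-kq(1-1/p)}\, I_k(t)^{q/p}.
\]
Fubini yields the average identity $\int I_k(t)\,dt \asymp 2^k \|\nabla \varphi\|_p^p$; a Markov/Borel--Cantelli argument (absorbing a polynomial factor $k^2$) then produces a single shift $t_\ast$ with $I_k(t_\ast) \le C k^2 \, 2^k \|\nabla\varphi\|_p^p$ for \emph{every} $k$. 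Assembling these bounds,
\[
\sum_k 2^{k(q-3)} \sum_j \bigl[\mathcal H^1\bigl(\varphi(\partial \tilde{Q}_j^{(t_\ast)})\bigr)\bigr]^q \le C \|\nabla \varphi\|_p^q \sum_k k^{2q/p}\, 2^{k(2q/p-3)},
\]
and the right-hand side is finite precisely when $2q/p<3$, i.e.\ $q<\tfrac{3}{2}p$. Theorem~\ref{thm:mainsum} then supplies the sought homeomorphic extension.

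The technical crux is the coherent selection of a single shift $t_\ast$ whose grid satisfies the $W^{1,p}$ line-integrability bound at \emph{every} dyadic level simultaneously. For a fixed $k$, this bound is a trivial Markov consequence of the Fubini identity, but without the Borel--Cantelli step the nested dyadic structure required by Theorem~\ref{thm:mainsum} would be lost and one would only have per-level, non-compatible shifts. The sharpness of the exponent $\tfrac{3}{2}p$ is transparent from the numerology: the averaged grid integral contributes $2^k$ (raised to the $q/p$-th power), the boundary-H\"older step contributes $2^{-kq(1-1/p)}$, and combined with the weight $2^{k(q-3)}$ these collapse to $\sum_k 2^{k(2q/p-3)}$, which diverges exactly at the threshold $q = \tfrac{3}{2}p$ coming from the 2D-Sobolev/H\"older embedding on $\mathbb S$.
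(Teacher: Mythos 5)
Your strategy — verify the summability condition \eqref{eq:discretehomeoextension} for a well-chosen dyadic grid and then invoke Theorem~\ref{thm:mainsum} — is exactly the paper's strategy, and the Hölder arithmetic that converts $\mathcal H^1(\varphi(\partial\tilde Q_j))\le\int_{\partial\tilde Q_j}|D\varphi|$ into the threshold $q<\tfrac32 p$ is identical. Where you genuinely diverge is the selection mechanism: you average a \emph{rigid global shift} of the whole grid (Fubini for $\int I_k(t)\,dt\asymp 2^k\|D\varphi\|_p^p$, then Markov plus a Borel--Cantelli union bound to pick one $t_*$ simultaneously good at every level, paying a benign $k^2$ that the strict inequality $2q/p<3$ absorbs), while the paper invokes the per-vertex grid-refinement machinery of \cite{HP} via Lemma~\ref{lem:refinesquares}, obtaining the sharper per-square bound \eqref{key} with no polynomial loss. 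Your version is more elementary and self-contained; the cost is the $k^2$ and one substantive omission. The paper's Lemma~\ref{lem:refinesquares} does double duty: besides \eqref{key}, the per-vertex shifts are staggered between consecutive levels so that conditions (1)--(3) hold, in particular that $\partial Q$ and $\partial\hat Q=\partial(\cup_i Q_i)$ meet at exactly two points (condition (3)). That staggering is exactly what the 3D construction in Section~\ref{sec:3d} uses: the homotopy between $\Gamma_{k,j}$ and $\hat\Gamma_{k,j}$, and conditions (7) of Lemma~\ref{lem:piecewiselinear}, rely on the parent curve $\varphi(\partial Q_{k,j})$ and the children-union curve being transversal Jordan curves. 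A rigidly translated \emph{standard} dyadic grid does not have this: for every $k$, $\partial\tilde Q^{(t_*)}_{k,j}$ literally coincides with the outer boundary of the union of its four children, so the image curves coincide rather than crossing twice. In other words, the grid you produce is a legitimate dyadic decomposition per Definition~\ref{def:dyadicsphere} and therefore formally matches the hypothesis of Theorem~\ref{thm:mainsum}, but the paper's own proof of Theorem~\ref{thm:mainsum} runs through a good modification, and what it actually controls is $\sum_k 2^{k(q-3)}\sum_j[\mathcal H^1(\varphi(\partial Q_{k,j}))]^q$ over that modified grid rather than over $\tilde{\mathcal D}_k$. To make your argument line up with what the extension construction really needs, replace the global translation by per-vertex shifts inside the allowed range \eqref{choiceofvertices}: the Fubini/averaging step and the Hölder computation go through verbatim, you recover the good-modification structure, and as a bonus the Borel--Cantelli step and the $k^2$ factor disappear because the selection can be done square by square, reproducing \eqref{key}.
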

For the sharpness of this result  we refer to the general embedding result by Sickel and Triebel~\cite[Theorem 3.2.1]{ST}. Namely for $p \in (1, \infty)$ we have $W^{1, p} (\mathbb S , \R^3) \subset W^{1-\frac{1}{q}, q} (\mathbb S , \R^3)$ if and only if $q \le  \frac{3}{2} p$. Even assuming that the mappings are homeomorphisms does not improve the inclusion at least when $p \ge 2$, see Example~\ref{ex:sobohomeo}. We do not know if one can take $q= \frac{3}{2} p$  in Theorem~\ref{thm:mainsobo}.  

Theorem~\ref{thm:mainsobo} follows from Theorem~\ref{thm:mainsum}.  On the contrary there are self homeomorphisms of $\mathbb S$ which satisfy~\eqref{eq:discretehomeoextension} and do not belong to any Sobolev class $W^{1,p} (\mathbb S, \R^3)$, $p\ge 1$, see Example~\ref{ex3}.

In topology and analysis, a number of extension problems have been studied.
A demand for  Sobolev homeomorphic extension problems comes from the variational approach to  Geometric Function Theory (GFT)~\cite{AIMb,  HKb, IMb,  Reb} and  mathematical models of Nonlinear Elasticity (NE)~\cite{Anb, Bac, Cib}. Both theories  enquire into homeomorphisms $h  \colon  \mathbb X \onto \mathbb Y $ of smallest \textit{stored energy}
\[
\mathsf E_\mathbb X [h] =  \int_\mathbb X \mathbf {\bf E}(x,h, Dh )\,  \dtext x\, ,   \qquad  \mathbf E \colon  \mathbb X \times \mathbb Y \times \mathbb R^{n\times n} 
\]
where   the so-called \textit{stored energy function} $\mathbf E$ characterizes the mechanical and elastic properties of the material occupying the domains. In a pure displacement setting, typically an orientation-preserving  boundary homeomorphism $\varphi \colon \partial \X \onto \partial \Y$ is given. The class of admissible deformations consists of  Sobolev homeomorphisms or just  Sobolev mappings $h \colon \overline{\X} \onto \overline{\Y}$ with non-negative Jacobian determinant $J_h (x) = \det Dh(x) \ge0$ (an axiomatic assumption in NE) which coincides with $\varphi$ on the boundary and having a finite stored energy.  In such  variational problems,  a first issue to address is  the non-emptiness  of the class of admissible deformations; that is, to solve the corresponding Sobolev homeomorphic extension problem.

 Note that an arbitrary  orientation-preserving  Sobolev homeomorphism $h$ need not be {\it strictly orientation-preserving} in the sense that $J_h (x) = \det Dh(x) > 0$ almost everywhere. For every $q<3$, there even exists a homeomorphism $h \colon \mathbb B \onto \mathbb B$ in $W^{1,q} (\mathbb B , \R^3)$  with $J_h(x) =0$ for almost every $x\in \mathbb B$, see~\cite{He0}. However, the homeomorphic extensions $h \colon \mathbb B \onto \mathbb B$ construtced in  Theorem~\ref{thm:mainsobo} and Theorem~\ref{thm:mainsum} are piecewise linear.  Thus, they are strictly orientation-preserving  provided that  the given  boundary homeomorphism itself  preserves  the orientation.  In particular, these homeomorphisms have   finite distortion.  The theory of mappings of finite distortion arose out of a need to extend the ideas and applications of the classical theory of quasiconformal mappings to the degenerate elliptic setting~\cite{HKb, IMb}.   We recall  that  a homeomorphism $h \colon \mathbb X \onto \mathbb Y$ of Sobolev class $W^{1,1}_{\loc}(\mathbb X, \mathbb R^n)$ defined on a domain $\X\subset \R^n$ has \textit{finite distortion} if
\begin{equation}\label{eq:dist}
 \abs{Dh(x)}^n \le K(x) J_h(x) 
\end{equation}
for some measurable function $1 \le K(x) < \infty$. Here, $\abs{Dh(x)}$ is the operator norm of the weak differential $Dh(x) \colon \X \to \R^n$ of $h$ at a point $x \in \X$.
We obtain  {\it quasiconformal} mappings if $K \in  L^\infty (\X)$.  There are several other distortion functions of great
interest in GFT. 
 Each of them is designed to measure the deviation from conformality of a given
mapping $h\colon  \mathbb X \to \R^n $ in terms of the tangent  
linear  map $Dh(x) \colon \R^n \to \R^n$. The most
interesting, from the applied point of view, is the inner distortion
function. In NE  one is typically  provided information not
only on   the differential matrix, but also on its  $(n-1)\times (n-1) $--minors; that is, the {\it cofactor matrix} $D^\sharp h$  called {\it co-differential} of $h$.  Now, for a homemorphism
 $h\in W^{1,1}_{\loc}(\mathbb X, \R^n)$
  of finite distortion we introduce its
  \textit{inner distortion} function, to be the smallest
  $K_{_I} (x) = K_{_I} (x,f ) \ge 1$  satisfying  
  \[  \abs{D^\sharp  f(x)} ^n =   K_{_I}(x) \cdot  J_f(x)^{n-1} \]

  The most pronounced extension result  in GFT is  the   Beurling-Ahlfors quasiconformal extension theorem~\cite{BA}.  It  states that a self-homeomorphism of the unit disk $\mathbb D$   is quasiconformal if and only if the boundary correspondence  homeomorphism $\varphi \colon \partial \mathbb D \onto  \partial \mathbb D$ is quasisymmetric. 
The   Beurling-Ahlfors result has found a number of  applications in Teichm\"uller theory, Kleinian groups, conformal welding
and dynamics, see e.g.~\cite{AIMb, Hu}.  It has  generalized to  the $n$-dimensional quasiconformal maps as well, first for $n= 3$ by Ahlfors~\cite{Ah}  and then for $n= 4$ by   Carleson~\cite{Ca}.  A full $n$-dimensional version of the Beurling-Ahlfors extension is due to Tukia and V\"ais\"al\"a~\cite{TV}. 
Their extension  uses,  among other things, Sullivan's theory~\cite{Su} of deformations of Lipschitz embeddings.
Moreover, Astala, Iwaniec, Martin and Onninen~\cite{AIMO}, as a part of their studies
of deformations with smallest mean distortion, characterizes  self homeomorphisms of the unit circle that admit a homeomorphic extension to the unit disk $\mathbb D$ with integrable distortion.  This $L^1$--Beurling-Ahlfors extension theorem enjoys the following 3D-variant.
\begin{theorem}\label{thm:corollary}
Let $\psi \colon \mathbb S \onto \mathbb S$ be an orientation-preserving homeomorphism. Suppose that the inverse  $\psi^{-1}=\varphi$ satisfies~\eqref{eq:discretehomeoextension} with $q=3$. Then  $\psi$ admits a homeomorphic extension $f \colon \overline{\mathbb B} \onto \overline{\mathbb B}$ with integrable inner distortion.
\end{theorem}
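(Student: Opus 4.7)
The plan is to apply Theorem~\ref{thm:mainsum} with $q=3$ to the inverse boundary map $\varphi = \psi^{-1}$ and to then take $f \deff h^{-1}$, where $h$ is the Sobolev homeomorphic extension of $\varphi$ supplied by that theorem. Since $\psi$ preserves orientation on $\mathbb S$, so does $\varphi$, and the hypothesis on $\varphi$ is precisely \eqref{eq:discretehomeoextension} with $q=3$. Theorem~\ref{thm:mainsum} therefore yields a homeomorphism $h \colon \overline{\mathbb B} \onto \overline{\mathbb B}$ extending $\varphi$ with $h \in W^{1,3}(\mathbb B, \R^3)$. As remarked in the text following Theorem~\ref{thm:mainsum}, $h$ can be taken piecewise linear, and since $\varphi$ preserves orientation the construction is strictly orientation-preserving: $J_h(x) > 0$ for a.e. $x \in \mathbb B$. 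In particular $h$ has finite distortion, and its pointwise inverse $f = h^{-1}$ is a self-homeomorphism of $\overline{\mathbb B}$ whose boundary values equal $\psi$.

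The second step is to express the inner distortion of $f$ in terms of $h$ via the usual inverse-function identities. Writing $x = f(y)$, for a.e. $y \in \mathbb B$ we have $Df(y) = Dh(x)^{-1}$, and the identity $A^\sharp = (\det A)(A^{-1})^T$ applied to $A = Df(y)$ together with the preservation of the operator norm under transposition gives
\[
|D^\sharp f(y)| = \frac{|Dh(x)|}{J_h(x)}, \qquad J_f(y) = \frac{1}{J_h(x)},
\]
so by the definition of the inner distortion in dimension $3$,
\[
K_{_I}(y, f) = \frac{|D^\sharp f(y)|^3}{J_f(y)^2} = \frac{|Dh(x)|^3}{J_h(x)}.
\]
The change of variables $y = h(x)$, $\dtext y = J_h(x)\,\dtext x$, then yields
\[
\int_{\mathbb B} K_{_I}(y, f)\, \dtext y = \int_{\mathbb B} |Dh(x)|^3\,\dtext x < \infty,
\]
which is exactly the required integrability.

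The only subtlety I foresee is justifying the pointwise calculus for $D^\sharp f$ and the area formula used in the change of variables. This is where the piecewise-linear nature of $h$ becomes essential: $h$ is affine on each simplex of a triangulation of $\overline{\mathbb B}$ with $J_h$ a positive constant on that simplex, so $f$ is piecewise affine on the image triangulation and the matrix identities for $Df$ and $D^\sharp f$ are elementary cell-by-cell. Summing over simplices gives the global bound $\int_{\mathbb B} K_{_I}(y,f)\,\dtext y \le \norm{Dh}_{L^3(\mathbb B)}^3 < \infty$, completing the argument. No delicate bi-Sobolev machinery is required precisely because the extension provided by Theorem~\ref{thm:mainsum} lives in the piecewise-linear category.
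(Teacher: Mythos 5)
Your proposal is correct and follows the paper's strategy: apply Theorem~\ref{thm:mainsum} with $q=3$ to $\varphi=\psi^{-1}$, note the resulting extension $h\in W^{1,3}(\mathbb B,\R^3)$ is piecewise linear and strictly orientation-preserving, and pass to $f=h^{-1}$ using the identity \eqref{eq:identity}. The matrix computation you carry out is exactly the ``formal'' pullback observation that the paper records before \eqref{eq:identity}, and your final estimate is the same.

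The one place you genuinely diverge is in how \eqref{eq:identity} is made rigorous. The paper appeals to the general bi-Sobolev literature (the cited works of Cs\"ornyei--Hencl--Mal\'y, Hencl--Koskela--Mal\'y, Hencl--Koskela--Onninen, and Onninen) which shows that for a $W^{1,3}(\mathbb B,\R^3)$ homeomorphism of finite distortion the inverse has integrable inner distortion and the stated energy identity holds. You instead observe that since $h$ is piecewise affine with $J_h>0$ on each cell (being an orientation-preserving PL homeomorphism), the cofactor/inverse-differential identities and the change of variables are elementary cell by cell, and one sums over the countable triangulation. Both are valid; your route is more self-contained and avoids invoking the general regularity-of-the-inverse theory, while the paper's route is shorter given the black boxes it is willing to cite and also covers the case of extensions that are not piecewise linear. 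Your version does lean on the assertion — made but not proved in detail in the introduction — that the constructed extension is piecewise linear; that claim is supported by Section~\ref{sec:injective} where the shortest-curve extension is shown to be piecewise linear when the boundary data and the image domain are, so your argument is complete modulo that structural fact.
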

Theorem~\ref{thm:corollary} is  actually  a relatively straightforward  consequence of  Theorem~\ref{thm:mainsum}, thanks to an important connection between the conformal energy of a homeomorphism and the inner distortion function of the inverse mapping.  Indeed   it is easy to see, at least formally, that the pullback of the $3$-form $K_{_I}(y, f) \,   \dtext y \in \wedge ^3\mathbb B$ by the inverse mapping
  $ f^{-1} \colon  \mathbb B \onto \mathbb B$ is equal to  $\abs{Df^{-1}(x)}^3 \, \dtext x \in \wedge ^3\mathbb B$. This observation is the key to
the  identity,
 \begin{equation}\label{eq:identity}
 \int_{\mathbb B} \abs{Dh (x)}^3 \, \dtext x = \int_{\mathbb B} K_{_I}(y, f) \, \dtext y \, , \quad \textnormal{ where } h=f^{-1} \colon \mathbb B \onto \mathbb B \, . 
 \end{equation} 
 The optimal Sobolev regularity of deformations to guarantee  the identity is well-understood today, \cite{CHM, HKM, HKO, Onreg}. In particular, if a homeomorphism $h \colon \mathbb B \onto \mathbb B$ of finite distortion belongs to the Sobolev class $W^{1,3} (\mathbb B , \R^3)$, then  the inverse $f=h^{-1}$ has integrable inner distortion. Thus, Theorem~\ref{thm:corollary}  simply follows from  Theorem~\ref{thm:mainsum}.
It is worth noting that  the borderline case in Theorem~\ref{thm:mainsobo} 
(p = 3 and q = 2), if true, would have an interesting corollary. Namely, a homeomorphism
$\psi \colon \R^2 \onto \R^2$ of locally integrable distortion would then admit a homeomorphic extension
$f \colon \R^3 \onto \R^3$ with locally integrable inner distortion.

\section{A discrete characterization, proof of Theorem~\ref{thm:discretechara}}
Let $\mathbb I=[a,b]^2$ be an initial square  in $\R^2$. The standard {\it dyadic decomposition} of $\mathbb I$ consists of closed squares $\tilde{Q} \subset \mathbb I$
with sides  parallel to the sides of $\mathbb I$ and of side length $l (\tilde{Q})= 2^{-k} (b-a)$, $k=1, 2, 3, \dots$; refers to  the {\it $k$-th generation} in the construction.  That is, the squares in the $k$-th  generation have the form
\[\tilde{Q}_j=2^{-k} (\mathbb I +v_j ) \subset \mathbb I \, , \qquad \textnormal{for some } v_j \in \mathbb R^2 \, . \]
They cover $\mathbb I$ and  have  side length  $2^{-k} (b-a)$. The collection of  the $k$-th generation squares are denoted by $\tilde{\mathcal{D}}_k$.  There are $2^{2k}$  squares in $\tilde{\mathcal{D}}_k$. The interiors  of the squares in the same generation $\tilde{\mathcal{D}}_k$ are  pairwise disjoint.

Let $\mathbb Q^3 =[0,1]^3 $ be the unit cube in $\R^3$. We define the $k$-th generation  dyadic decomposition of  $\partial \mathbb Q^3$ as follows:  first we divide each of the six faces of $\partial \mathbb Q$ into the $k$-th generation   squares and then  the $k$-th generation  dyadic decomposition of  $\partial \mathbb Q^3$ simply consists  of  the union of these closed squares. 

Now, since $\overline{\mathbb B}$ is a bi-Lipschitz equivalent with $\mathbb Q^3$, defining a $k$-th generation  dyadic decomposition of $\partial \mathbb B = \mathbb S$ can be easily induced from the above case.
\begin{definition}\label{def:dyadicsphere} Let $\Phi \colon \R^3 \to \R^3$ be a bi-Lipschitz map which takes  $\mathbb Q^3$ onto $\overline{\mathbb B}$. Then the  $k$-th generation dyadic decomposition of $\mathbb S$, denoted by $\tilde{\mathcal{D}}_k$, consists of $\Phi (\tilde{Q}_j)$, where $\tilde{Q}_j$ is a $k$-th generation dyadic square of $\partial \mathbb Q^3$.
\end{definition}

\begin{theorem}\label{thm:discretecharaauxi}
Let $\varphi \colon \R^2 \to \R^2$ be a homeomorphism, $\mathbb I_R= [-R,R]^2 \subset \R^2$ for $R>0$ and $N\in \mathbb N$. Denote the collection of  $k$-th generation dyadic squares of   $ \mathbb I_N$ by  $\tilde{\mathcal{D}}^N_k$. Then, for $2<q< \infty$ we have
\begin{equation}\label{eq:intcond}
\int_{ \mathbb I_R} \int_{ \mathbb I_R} \frac{\abs{\varphi (x) - \varphi (y)}^q}{\abs{x-y}^{q+1}} \, \dtext x \, \dtext y  < \infty   \quad \textnormal{for every } R>0 
\end{equation}
if and only if
\begin{equation}\label{eq:sumcond}
\sum_{k=1}^\infty   2^{k (q-3)}  \sum_{\tilde{Q}_{j} \in \tilde{\mathcal{D}}^N_k}   \big[ \diam \varphi (\tilde{Q}_j) \big]^q < \infty   \quad \textnormal{for every } N\in \mathbb N \,  .
\end{equation}
\end{theorem}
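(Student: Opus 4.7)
I would establish the equivalence by proving each direction separately with constants depending only on $R$, $N$, $q$. The slight quantifier mismatch (``for every $R>0$'' vs.\ ``for every $N \in \N$'') is handled by monotonicity, since $\mathbb I_R \subset \mathbb I_{\lceil R \rceil}$.

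\emph{Direction 1 (sum controls integral).} Decompose the Gagliardo-type integral over dyadic annuli $A_k = \{(x,y) \in \mathbb I_R^2 : 2^{-k-1} \le |x-y| \le 2^{-k}\}$. For $(x,y) \in A_k$, both $x$ and $y$ lie in a connected union $U$ of at most two adjacent dyadic squares at level $k$, so continuity of $\varphi$ yields $|\varphi(x)-\varphi(y)| \le \diam \varphi(U) \le \sum_{\tilde Q_i \subset U} \diam \varphi(\tilde Q_i)$. Using $|x-y|^{-(q+1)} \le C\, 2^{k(q+1)}$ on $A_k$, $|\tilde Q_j| \sim 2^{-2k}$, and that each dyadic square has only $O(1)$ neighbours, one obtains
\[
\iint_{A_k} \frac{|\varphi(x)-\varphi(y)|^q}{|x-y|^{q+1}}\, dx\, dy \le C\, 2^{k(q-3)} \sum_{\tilde Q_j \in \tilde{\mathcal D}^N_k}[\diam \varphi(\tilde Q_j)]^q,
\]
and summing over $k$ completes this direction.

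\emph{Direction 2 (integral controls sum).} For each dyadic square $\tilde Q_j$ at level $k$, pick $p_j, q_j \in \tilde Q_j$ realizing $|\varphi(p_j)-\varphi(q_j)| = d_j := \diam \varphi(\tilde Q_j)$. By the triangle inequality every $x \in \tilde Q_j$ satisfies $\max\{|\varphi(x)-\varphi(p_j)|,\,|\varphi(x)-\varphi(q_j)|\} \ge d_j/2$, so within distance $\sqrt{2}\cdot 2^{-k}$ of $x$ there is a point $y$ with $|\varphi(x) - \varphi(y)| \ge d_j/2$. Since $\varphi$ is a homeomorphism, the preimages $A_j = \varphi^{-1}(B(\varphi(p_j),d_j/4)) \cap \tilde Q_j$ and $B_j = \varphi^{-1}(B(\varphi(q_j),d_j/4)) \cap \tilde Q_j$ are disjoint open sets, and any $(x,y) \in A_j \times B_j$ satisfies $|\varphi(x)-\varphi(y)| \ge d_j/2$. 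If both $|A_j|, |B_j| \gtrsim |\tilde Q_j|$, integrating over $A_j \times B_j$ already gives the required $\sim 2^{k(q-3)} d_j^q$ contribution. When a preimage is thin, the proof iterates to a finer sub-square of $\tilde Q_j$ on which a comparable diameter is still realized together with a density bound, and the exponent $q > 2$ is used to keep the resulting series across refinement scales absolutely convergent.

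\emph{Main obstacle.} The hard direction is the second. The crucial difficulty is that the pointwise lower bound $\max\{|\varphi(x)-\varphi(p_j)|,|\varphi(x)-\varphi(q_j)|\} \ge d_j/2$ does not directly upgrade to an integral lower bound, because a merely continuous map can concentrate its diameter on a set of arbitrarily small measure. The homeomorphism property (openness of preimages of open balls together with injectivity) is essential to force a positive-measure region carrying large differences, and $q > 2$ is precisely the exponent that makes the scale-iteration argument close up.
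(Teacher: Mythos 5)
Your Direction~1 (sum controls integral) is essentially the paper's corresponding direction. The paper splits the double integral over pairs of dyadic squares at bounded dyadic distance, while you split into dyadic annuli $2^{-k-1}\le|x-y|\le 2^{-k}$; both are equivalent organizing devices leading to the same counting estimate, and this half is fine.

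Direction~2 is where there is a genuine gap, and it happens to be the half where $q>2$ is actually needed. Your proposed argument is entirely local and purely two-dimensional: pick points $p_j,q_j$ realizing $\diam\varphi(\tilde Q_j)$, use the preimages of two small balls, and if they are fat you get the lower bound, otherwise ``iterate to a finer sub-square.'' The fat case is fine, but the iteration is never described and it is far from clear that it can be made to work. Two concrete problems. First, when the preimage of one of the small balls is thin it can have any shape (e.g.\ a long snake of width $\ll 2^{-k}$), and there is no distinguished finer dyadic sub-square in which both a comparable diameter and a density bound are simultaneously recovered; the shape of a homeomorphic preimage gives you no such structure. Second, even if for each $(k,j)$ you could find a sub-square of level $m_{k,j}\ge k$ giving the contribution $\gtrsim 2^{k(q-3)}d_{k,j}^q$, the sets you integrate over for different $(k,j)$ would have to be essentially disjoint across \emph{all} scales and indices to sum them against a single Gagliardo integral, and your construction does not control this overlap (nested dyadic squares can all delegate their lower bound to the same fine sub-square). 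Finally, the assertion that ``$q>2$ is precisely the exponent that makes the scale-iteration close up'' is speculative; in this direction the paper never performs an iteration at all.

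The paper's route for this direction is completely different and far more robust: it invokes Gagliardo's trace theorem to upgrade the assumption $\varphi\in W^{1-1/q,q}$ to the existence of a genuine $W^{1,q}$ extension $f$ of $\varphi$ to a 3D slab, then uses the oscillation estimate on spheres (Sobolev embedding on $2$-spheres, which is exactly what requires the exponent $\eta>2$, hence $q>2$) to bound $\diam\varphi(\tilde Q_{k,j})$ by an $L^\eta$-average of $|Df|$ on a ball in $\R^3$, and finally sums using the $L^{q/\eta}$-boundedness of the Hardy--Littlewood maximal function. That is where the third dimension and the extension $f$ do all the heavy lifting, and it sidesteps the density/overlap issues your sketch runs into. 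If you want to repair your argument you should adopt this 3D route; as written, Direction~2 does not constitute a proof.
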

\begin{proof}
First we assume the condition~\eqref{eq:intcond} with $R=2^{12}$. Now,  the mapping  $\varphi \colon \R^2 \to \R^2$ admits a continuous extension $f \colon \R^3 \to \R^3$ in $W^{1,p} ( \mathbb I_{R} \times [-R,R] , \R^3)$ (see \eqref{eq:contextensioncondition} and the paragraph before). It suffices to prove~\eqref{eq:sumcond} with $N=1$. 

Fix $\tilde{Q}_{k,j} \in \tilde{\mathcal{D}}^1_k$ for some  $k\in \mathbb N$ and $j \in \{1, \dots , 2^{2k}\}$. We denote the center of $\tilde{Q}_{k,j} \subset \R^2$ by $x_\circ$.  Let $\mathbb B^3_R$ be the 3-dimensional ball in $\R^3$ centered at $x_\circ$ with radius $R>0$ and
\begin{equation}\label{eq:triv}
\mathbb B^2_{R} = \mathbb B^3_R \cap (\mathbb R^2 \times \{0 \}) \, . 
\end{equation}

 Choose $\eta\in (2,q)$.   According to the Sobolev imbedding theorem on spheres~\cite[Lemma 2.20]{HKb}  there is a constant $C>0$ such that for a.e. $s\in (0, R)$ we have 
\[\diam f(\partial \mathbb B^3_s)  \le C \,  s^{1-\frac{2}{\eta}} \left( \int_{\partial \mathbb B^3_s} \abs{Df}^\eta \right)^\frac{1}{\eta}  \, . \]
This is the moment where  we used the assumption  $q>2$.  By~\eqref{eq:triv} we always have
\[ \diam f(\partial \mathbb B^2_s) \le \diam f(\partial \mathbb B^3_s) \, . \]
Since $\varphi \colon \R^2 \onto \R^2$ is a homeomorphism we get
\[   \diam \varphi( \mathbb B^2_s) = \diam \varphi(\partial \mathbb B^2_s) \, . \]
For fixed $r\in (0,R/2)$,  the above estimates give
\[  \diam \varphi( \mathbb B^2_r) \le    C\,  s^{1-\frac{2}{\eta}} \left( \int_{\partial \mathbb B^3_s} \abs{Df}^\eta \right)^\frac{1}{\eta}  \qquad \textnormal{for a.e. } s\in (r,R)\ \]
and
\begin{equation}\label{eq:sobospheres}  \left[\diam \varphi( \mathbb B^2_r) \right]^\eta \int_r^{2r} \frac{\dtext s}{s^{\eta-2}}  \le C \int_{\mathbb B^3_{2r} \setminus \mathbb B^3_r} \abs{Df}^\eta \, . 
\end{equation}
Thus 
\[ \diam \varphi( \mathbb B^2_r) \le C r^{1-\frac{3}{\eta}}   \left( \int_{\mathbb B^3_{2r}} \abs{Df}^\eta \right)^\frac{1}{\eta}  \]
and 
\begin{equation}\label{eq:diamestimate}
 \diam \varphi(\tilde{Q}_{k,j}) \le C 2^{-k(1-3/\eta)}    \left( \int_{\mathbb B^3_{2^{3-k}}} \abs{Df}^\eta \right)^\frac{1}{\eta}  \, .
 \end{equation}
The $k$-th dyadic decomposition $\tilde{\mathcal{D}}_k = \{\tilde{Q}_{k,j} \colon k\in \mathbb N  \, , \; j=1, \dots , 2^{2k}  \}$ of $\mathbb I_1 \subset \R^2$ defines a corresponding Whitney decomposition of $\mathbb I_1 \times [0,2] \subset \R^3$,
\[  \mathcal W_k =  \{\tilde{Q}^3_{k,j} \colon k\in \mathbb N  \, , \; j=1, \dots , 2^{2k}  \}  \]
where
\[\tilde{Q}^3_{k,j}  = \tilde{Q}_{k,j}  \times [2^{-k+1}, 2^{-k+2}] \, . \]
Let $x\in \tilde{Q}^3_{k,j} $ and $c=2^{11}$. Then $B^3_{c2^{-k}}(x) = B^3 (x, {c2^{-k}}) \supset \mathbb B_{2^{3-k}}^3$ and so
\[   \diam \varphi(\tilde{Q}_{k,j}) \le C 2^{-k(1-3/\eta)}    \left( \int_{ B^3_{c2^{-k}  } (x)} \abs{Df}^\eta \right)^\frac{1}{\eta}   \]
by~\eqref{eq:diamestimate}. In particular, we have
\begin{equation}\label{eq:maximalestimate}
\diam \varphi(\tilde{Q}_{k,j}) \le C 2^{-k}   \big[ {\bf M}_{c}  \abs{Df}^\eta (x)\big]^\frac{1}{\eta}  \qquad \textnormal{ for all } x\in  \tilde{Q}^3_{k,j} \, .
 \end{equation}
Here ${\bf M}_c$ denotes the Hardy-Littlewood maximal operator,
\[  {\bf M}_c \abs{Df}^\eta  (x) = \sup_{r<c}  \frac{1}{\abs{ B_r^3 (x) }} \int_{ B_r^3 (x)}\abs{Df}^\eta    \, .   \]
Raising the estimate~\eqref{eq:maximalestimate} to the power $q$ and then integrating it over the cube $\tilde{Q}^3_{k,j}$ we have
\[2^{-3k}   \big[ \diam \varphi(\tilde{Q}_{k,j}) \big]^q \le C 2^{-qk}   \int_{\tilde{Q}^3_{k,j}}    \big[ {\bf M}_{c}  \abs{Df}^\eta (x)\big]^\frac{q}{\eta} \, .\]
Thus,
\[
\begin{split}
 \sum_{k=1}^\infty \sum_{j=1}^{2^{2k}} 2^{k(q-3)}  \big[ \diam \varphi(\tilde{Q}_{k,j}) \big]^q   &\le C  \sum_{k=1}^\infty \sum_{j=1}^{2^{2+2k}}   \int_{Q^3_{k,j}}    \big[ {\bf M}_{c}  \abs{Df}^\eta (x)\big]^\frac{q}{\eta}\\
 &  =  C   \int_{ \mathbb I_1 \times [0,2] }    \big[ {\bf M}_{c}  \abs{Df}^\eta (x)\big]^\frac{q}{\eta} .
 \end{split} \]
Since $q/\eta>1$ we can use the boundedness of the Hardy-Littlewood maximal function in $L^{\frac{q}{\eta}}$ for function $|Df|^\eta$ to obtain
\[  \sum_{k=1}^\infty \sum_{j=1}^{2^{2k}} 2^{k(q-3)} \big[ \diam \varphi(\tilde{Q}_{k,j}) \big]^q   \le C  \int_{ \mathbb I_{c} \times [-2c,2c] } \abs{Df}^q  \,  \]
as claimed.
\vskip0.2cm

Second we assume~\eqref{eq:sumcond} for $N=1$ and some $q\in (1, \infty)$. Our goal is show that
\[\int_{ \mathbb I_1} \int_{ \mathbb I_1} \frac{\abs{\varphi (x) - \varphi (y)}^q}{\abs{x-y}^{q+1}} \, \dtext x \, \dtext y  < \infty \, . \]

We say that two dyadic squares on the same level $k$ are \emph{neighbors} if their boundaries have at least one intersection point. We also define the \emph{dyadic distance} $d^*(S,S')$ of two squares $S, S' \in \tilde{\mathcal{D}}^1_k$   as the number of neighbors one has to travel through to reach $S'$ from $S$, so that two dyadic neighbors themselves have a distance of $0$. If $S,S'$ are such squares then we denote $S\vert S'$ if the dyadic distance between $S$ and $S'$ is either $1$ or $2$. We first note that
\begin{equation}\label{eq:dya}\int_{ \mathbb I_1} \int_{ \mathbb I_1} \frac{\abs{\varphi (x) - \varphi (y)}^q}{\abs{x-y}^{q+1}} \, \dtext x \, \dtext y  \leq \sum_{S \vert S'} \int_{ S}\int_{ S'}  \frac{\abs{\varphi (x) - \varphi (y)}^q}{\abs{x-y}^{q+1}} \, \dtext x \, \dtext y \end{equation}
where the sum is taken over all levels of dyadic squares and all pairs for which $S\vert S'$ holds. This is due to the geometric fact that for every pair of points $x,y \in \mathbb I_1$ there are dyadic squares with $S\vert S'$ so that $x \in S$ and $y \in S'$.

Let now $S\vert S'$ with $x \in S \in \tilde{\mathcal{D}}^1_k$ and $y \in S' \in \tilde{\mathcal{D}}^1_k$. Denote by $S_1\in \tilde{\mathcal{D}}^1_k$ and $S_2\in \tilde{\mathcal{D}}^1_k$ two different dyadic squares  so that $(S,S_1,S_2,S')$ form a sequence of dyadic squares for which each successive pair is a neighbor. Then we simply estimate that
\begin{align*}|\varphi(x) - \varphi(y)| &\leq \diam \varphi (S) + \diam \varphi (S_1) + \diam \varphi (S_2) + \diam \varphi (S')
\\&\leq \sum_{d^*(S,\tilde{Q}) \leq 2} \diam \varphi (\tilde{Q}).
\end{align*}
Note that the sum in the last expression has at most $49$ terms. Hence if we sum this expression over all dyadic squares $S$, every dyadic square will be repeated at most $49$ times.
Plugging this into \eqref{eq:dya}  and using \eqref{eq:sumcond} gives
\begin{align*}
\int_{ \mathbb I_1} \int_{ \mathbb I_1} \frac{\abs{\varphi (x) - \varphi (y)}^q}{\abs{x-y}^{q+1}} \, \dtext x \, \dtext y
&\leq \sum_{k=1}^\infty \sum_{S \in \tilde{\mathcal{D}}^1_k} \int_{ S} \int_{ S'} \frac{49^p \left[\diam \varphi (S)\right]^q}{2^{-(q+1)k}}  \, \dtext x \, \dtext y
\\&\leq 49^q\sum_{k=1}^\infty  \frac{2^{-2k} 2^{-2k}}{2^{-(q+1)k}} \sum_{S \in \tilde{\mathcal{D}}^1_k}   \big[ \diam \varphi (S) \big]^q
\\& < \infty.
\end{align*}
\end{proof}

Clearly, Theorem~\ref{thm:discretechara} is an immediate consequence of Theorem~\ref{thm:discretecharaauxi}.

\section{Examples}



An arbitrary homeomorphism $\varphi \colon \partial \mathbb D \onto \partial \mathbb D$ admits a homeomorphic extension to the unit disk $\mathbb D \subset \R^n$ in the Sobolev class $W^{1,q} (\mathbb D , \R^2)$ for all $q<2$. Our next example shows that such a result has no 3D counterpart.

\begin{example}\label{ex:1}
There is a Sobolev homeomorphism $\varphi \colon {\mathbb S} \onto {\mathbb S}$ such that   $\varphi \not \in W^{1-\frac{1}{q}, q} (\mathbb S , \R^3)$  for any $q>1$ and hence it does not admit a continuous extension $f \colon \overline{\mathbb B} \to \R^3$ in $W^{1,q}(\mathbb B,\R^3)$.
\end{example}


\begin{proof}
We simplify our writing here and   construct a Sobolev homeomorphism $\varphi \colon [0,1]\times [0,1] \onto [0,1] \times [0,2]$ with $\varphi (0,0)= \varphi (1,1)$. Note that this causes no loss of generality  due to a  suitable bilipschitz change of variables  in both  domain and target side.


Let $s \colon \R \to \R$ be a $1$-periodic piecewise linear ``saw'' function defined by
$$
s(x)=\begin{cases}
2x&\text{ for }x\in[0,\frac{1}{2}],\\
2-2x&\text{ for }x\in[\frac{1}{2},1].\\
\end{cases}
$$
We set $s_k(x)=s(x 10^k)$ and obtain a $10^{-k}$-periodic saw function. By induction we choose an 
increasing sequence of integers $n_k$ such that 
\eqn{wewant}
$$
\begin{aligned}
10^{-kq} 10^{(q-1)\frac{1}{2}n_k}&\geq 2^k  \text{ and }\\
\Bigl(\sum_{j=1}^{k-1}10^{-j}\cdot 2\cdot 10^{n_j}\Bigr) 10^{-\frac{1}{2}n_k}&\leq \frac{1}{8}10^{-k}.\\
\end{aligned}
$$
We set 
$$
r_k=10^{-\frac{1}{2}n_k}\text{ and }\phi(x)=\sum_{j=1}^{\infty}10^{-j}s_{n_j}(x).
$$
Note that  $\phi$, being a uniform limit of continuous functions, is also continuous. It is not difficult to check that the mapping $\varphi \colon [0,1]^2 \onto [0,1]\times [0,2]$, defined by
$$
\varphi(x_1,x_2)=[x_1,x_2+\phi(x_1)]\text{ is a homeomorphism}. 
$$
We estimate
\begin{equation}\label{aha}
\begin{split}
& \int_{(0,1)^2\times(0,1)^2}\frac{|\varphi (x)-\varphi (y)|^q}{|x-y|^{q+1}}\; dx \; dy  \\ & \geq 
C \int_{(0,1)^2\times(0,1)^2}\frac{(|\phi(x_1)-\phi(y_1)|-|x_2-y_2|)^q}{|x-y|^{q+1}}\; dx \; dy
\end{split}
\end{equation}
and note that the term $\frac{|x_2-y_2|^q}{|x-y|^{q+1}}\leq\frac{1}{|x-y|}$ in the last integral is integrable. 
Therefore, it suffices to show that the integral
\begin{equation}\label{aha2}
 \int_{(0,1)^2\times(0,1)^2}\frac{|\phi(x_1)-\phi(y_1)|^q}{|x-y|^{q+1}}\; dx \; dy 
\end{equation}
diverges.

For that, let us fix $k\in\en$ and denote 
$$
A_1:=\bigl\{x_1\in[0,1]:\ x_1\in[-\tfrac{1}{8}10^{-n_k}+j 10^{-n_k},\tfrac{1}{8}10^{-n_k}+j 10^{-n_k}]\text{ for }j\in\en\cup\{0\}\bigr\},
$$
 i.e. $s_{n_k}(x_1)\in[0,\tfrac{1}{4}]$ for every $x_1\in A_1$ 
and 
$$
A_2=\{y_1\in [0,1]:\ y_1\in [\tfrac{3}{8}10^{-n_k}+j 10^{-n_k},\tfrac{5}{8}10^{-n_k}+j 10^{-n_k}]\text{ for }j\in\en\cup\{0\}\},$$
 i.e. $s_{n_k}(y_1)\in[\tfrac{3}{4},1]$ for every $y_1\in A_2$. 
Given $x_1\in A_1$ we set 
$$
A_2(x_1)=A_2\cap (x_1-r_k,x_1+r_k).
$$  
It is easy to see that for every $x_1\in A_1$ and $y_1\in A_2$ we have 
$$
10^{-k}|s_{n_k}(x_1)-s_{n_k}(y_1)|\geq \frac{1}{2}10^{-k}. 
$$
Further for every $x_1$ and $y_1$ we have 
$$
\Bigl|\sum_{j=k+1}^{\infty}10^{-j}s_{n_j}(x_1)-\sum_{j=k+1}^{\infty}10^{-j}s_{n_j}(y_1)\Bigr|\leq \sum_{j=k+1}^{\infty}10^{-j}\leq \frac{1}{8}10^{-k}. 
$$
The function $10^{-j}s_{n_j}$ is Lipschitz with Lipschitz constant $10^{-j}\frac{1}{10^{-n_j}/2}$. Hence in view of \eqref{wewant}, for every $x_1$ and $y_1$ with $|x_1-y_1|<r_k$ we have 
$$
\Bigl|\sum_{j=1}^{k-1}10^{-j}s_{n_j}(x_1)-\sum_{j=1}^{k-1}10^{-j}s_{n_j}(y_1)\Bigr|\leq \sum_{j=1}^{k-1}10^{-j}\cdot 2\cdot 10^{n_j}\cdot |x_1-y_1|\leq \frac{1}{8}10^{-k}. 
$$
It follows that for every $x_1\in A_1$ and $y_1\in A_2$ with $|x_1-y_1|<r_k$ we have 
$$
\begin{aligned}
|\phi(x_1)-\phi(y_1)|\geq &10^{-k}|s_{n_k}(x_1)-s_{n_k}(x_2)|\\ &-\Bigl|\sum_{j=k+1}^{\infty}10^{-j}s_{n_j}(x_1)-\sum_{j=k+1}^{\infty}10^{-j}s_{n_j}(y_1)\Bigr|
\\ &-\Bigl|\sum_{j=1}^{k-1}10^{-j}s_{n_j}(x_1)-\sum_{j=1}^{k-1}10^{-j}s_{n_j}(y_1)\Bigr|\\
\geq& \frac{1}{4}10^{-k}. 
\end{aligned}
$$

To show \eqref{aha2} we  estimate the integral
$$
C\int_{A_1}\int_{A_2(x_1)}
\int_0^1\int_0^1  \frac{10^{-kq}}{\bigl(|x_1-y_1|+|x_2-y_2|\bigr)^{q+1}}\; dx_2 \; dy_2\; dy_1\; dx_1.
$$
Since applying a change of variables $s=x_2-y_2$ and $t=x_2+y_2$ we  obtain
\[
\begin{split}
\int_0^1\int_0^1  \frac{1}{\bigl(|a|+|x_2-y_2|\bigr)^{q+1}}\; dx_2 \; dy_2
& \geq C\int_{\frac{1}{2}}^{\frac{3}{2}}1\; dt\int_{-\frac{1}{2}}^{\frac{1}{2}}\frac{1}{\bigl(|a|+|s|\bigr)^{q+1}}\; ds \\
& \geq C\frac{1}{|a|^{q}} 
\end{split}
\]
we may estimate \eqref{aha2} from below by the integral 
\begin{equation}\label{aha3}
C\int_{A_1}\int_{A_2(x_1)}\frac{10^{-kq}}{|x_1-y_1|^{q}}\; dy_1\; dx_1.
\end{equation}
We use again a change of variables  $s=x_1-y_1$ and $t=x_1+y_1$. 
Since $|A_1|\geq \frac{1}{4}$ and $|A_2|\geq \frac{1}{4}$ it is not difficult to see that the sets $A_1+A_2$ and $A_1-A_2$ are large enough, i.e. they occupy a large percentage of each interval of size much bigger than $10^{-n_k}$. Together with the fact that 
$r_k=10^{-\frac{1}{2}n_k}$ is much bigger than the period of $s_{n_k}$ which is $10^{-n_k}$ we may estimate the  integral~\eqref{aha3} from below as
$$
C\int_{r_k/2}^{r_k}\frac{10^{-kq}}{|s|^{q}}\; ds\geq C\frac{10^{-kq}}{r^{q-1}_k}.
$$
By \eqref{wewant} we finally conclude that the integral~\eqref{aha2} diverges as we wanted.
\end{proof}

The following example shows the sharpness of Theorem~\ref{thm:mainsobo}. 
\begin{example}\label{ex:sobohomeo}
Let $p \ge 2$ and $q>\frac{3}{2} p$. There is a Sobolev homeomorphism $\varphi \colon {\mathbb S} \onto {\mathbb S}$ such that   $\varphi\in W^{1,p} (\mathbb S, \R^3)$ but  $\varphi\notin W^{1-\frac{1}{q}, q} (\mathbb S , \R^3)$. Hence such a $\varphi$ does not admit a continuous extension $h \colon \overline{\mathbb B} \to \R^3 $ in the Sobolev class $W^{1,q}(\mathbb B,\R^3)$.
\end{example}
\begin{proof}
For simplicity we give a formula for  $\varphi$ from $\mathbb D$ onto itself and not from $\mathbb S$ onto $\mathbb S$.  It is clear that this causes no loss of generality due to a suitable  bilipschitz change of variables. Given our $p\geq 2$ and $q>\frac{3}{2} p$ we choose $\alpha>0$ such that
$$
1-\frac{2}{p} <\alpha<1-\frac{3}{q}.
$$
We set
$$
\varphi(x)=\frac{x}{|x|}|x|^{\alpha}.
$$
A simple computation gives that $\varphi \in W^{1,p} (\mathbb D, \R^2)$. Either by a direct computation we also obtain that $\varphi\notin W^{1-\frac{1}{q}, q} (\mathbb D, \R^2)$ (see e.g. \cite[Lemma 1, page 44]{RS}) or assuming by contradiction that  $\varphi\in W^{1-\frac{1}{q}, q} (\mathbb D, \R^2)$. In the latter case  $\varphi$ admits a continuous extension $h \colon \mathbb D \times (-1,1) \to \R^3$ in  the Sobolev class $W^{1,q} (\mathbb D \times (-1,1) , \R^3)$.  In particular, $h$ is locally $(1-\frac{3}{q})$-H\"older continuous but this is impossible because $h=\varphi$ on $\mathbb D \times \{0\}$ is just   $(1-\frac{2}{\alpha})$-H\"older continuous.
\end{proof}

Theorem~\ref{thm:mainsobo} follows from Theorem~\ref{thm:mainsum}. In the following example we show that on the contrary there is  a homeomorphism $\varphi \colon \mathbb S \onto \mathbb S$ which satisfy the condition~\eqref{eq:discretehomeoextension} in Theorem~~\ref{thm:mainsobo}  and does not belong to any Sobolev class $W^{1,p} (\mathbb S, \R^3)$, $p\ge 1$. Again, we define  $\varphi$  only on $[0,1]^2$, and a  bilipschitz change of variables easily generalizes this homeomorphism  from $\mathbb S$ onto $\mathbb S$.

\prt{Example}
\begin{proclaim}\label{ex3}
Consider 
\eqn{defh}
$$
\varphi(x,y)=[g(x),y]\text{ where }g(x)=x+C(x) 
$$
and $C$ is Cantor function. Not the standard  $1/3$ Cantor function, but $1/k$ Cantor function (for $k\geq 2$), i.e. in each step we remove the middle $1/k$-part of the interval. 
It is not difficult to show that this Cantor function is H\"older continuous with exponent $\alpha=\frac{\log\frac{1}{2}}{\log(\frac{1}{2}(1-\frac{1}{k}))}$. Let us note that 
$$
\lim_{k\to\infty}\alpha=\lim_{k\to\infty}\frac{\log\frac{1}{2}}{\log(\frac{1}{2}(1-\frac{1}{k}))}=1. 
$$

Let $\tilde{\mathcal{D}}_k$, $k\in\N$, be the collection of $k$-th generation dyadic square  of $[0,1]^2$ into $(2^k)^2$ squares of sidelength $2^{-k}$. It is easy to see that 
$\mathscr{H}^1(\varphi(\partial \tilde{Q}_{k,j}))<\infty$ for all $k$ and $j$ by \eqref{defh}. 
Using H\"older continuity of $h$ we get  
$$
\sum_{k=0}^\infty\sum_{j=1}^{2^{2k}} 2^{-(3-q)k} \mathscr{H}^1(\varphi(\partial \tilde{Q}_{k,j}))^q
\leq  C\sum_{k=0}^\infty 2^{2k} 2^{-(3-q)k} [2^{-\alpha k}]^q. 
$$
This sum is finite for $q(1-\alpha)<1$ and we can choose $k$ large enough so that this condition holds, i.e.~\eqref{eq:discretehomeoextension} holds. By Theorem \ref{thm:mainsum} we obtain that we can extend this boundary homeomorphism as a $W^{1,q}$ homeomorphism inside. However, the mapping $\varphi$ does not belong to $W_{\loc}^{1,1} ([0,1]^2, \R^2)$ as it fails the ACL condition on all vertical segments (it just has bounded variation). 
\end{proclaim}

\section{Decomposition of the domain and target side}\label{sec:standa}
\noindent 
 In this section we start with the standard dyadic decomposition $\tilde{\mathcal{D}}_k$ of the boundary and define a modification of it in order to control the lengths of the image curves of the image grid under the given boundary map $\varphi$. Furthermore, we will define piecewise linear replacements of these image curves. These divisions on the domain and target side will be used in later sections to assist in defining the extension map we use to prove our main result, Theorem \ref{thm:mainsum}. We also show in this section that Theorem \ref{thm:mainsobo} follows from Theorem \ref{thm:mainsum}.

\begin{lemma}\label{lem:refinesquares} Let $\tilde{\mathcal{D}}_k = \{\tilde{Q}_{k,j} : k \in \mathbb{N}, j = 1 \ldots 2^{2k}\}$ be the dyadic decomposition of the unit square $Q_0 = [0,1]^2$ into closed squares of side length $2^{-k}$ for each fixed $k$. Let $p > 1$ and $\varphi : \overline{Q_0} \to \overline{Q_0}$ be a homeomorphism in the space $\varphi \in W^{1,p}(3Q_0,\er^2)$. Then there exists a set of closed quadrilaterals $\mathcal{D}_k = \{Q_{k,j} : k \in \N, j = 1 \ldots 2^{2k}\}$ such that
\begin{enumerate}
\item 
For each point $\tilde{v} \in Q_0$ which is a vertex of a dyadic square of side length $2^{-k}$ in $\tilde{\mathcal{D}}_k$, there exists exactly one corresponding point $v \in Q_0$ which is a vertex of a quadrilateral from $\mathcal{D}_k$. The vertices $v$ of a quadrilateral $Q_{k,j}$ in $\mathcal{D}_k$ are exactly the points which correspond to the vertices $\tilde{v}$ of the dyadic square $\tilde{Q}_{k,j}$. Moreover, for the coordinates of these points $v=[v_1,v_2]$ and $\tilde{v}=[\tilde{v}_1,\tilde{v}_2]$ we have (see Figure \ref{newgrid})
\eqn{choiceofvertices}
$$
v_1-\tilde{v}_1 \in\Bigl[\frac{2^{-k}}{10}-\frac{2^{-k}}{40},\frac{2^{-k}}{10}\Bigr]\text{ and }v_2-\tilde{v}_2 \in\Bigl[\frac{2^{-k}}{10}-\frac{2^{-k}}{40},\frac{2^{-k}}{10}\Bigr]
$$ 
for all pairs of corresponding vertices.
\item The quadrilaterals $Q_{k,j}$ for each fixed level $k$ are thus mutually disjoint apart from their boundaries.
\item If we inherit the parent-child relation between dyadic squares from $\tilde{\mathcal{D}}$ to $\mathcal{D}$, then the following holds. The children $Q_1, \ldots Q_4 \in \mathcal{D}_{k+1}$ of a given square $Q \in \mathcal{D}_k$ (i.e. $Q=Q_1\cup Q_2\cup Q_3\cup Q_4$) need not be contained in $Q$ nor does their union need to cover $Q$. However, for $\hat{Q}=\cup_{i=1}^4 Q_i$ the boundaries $\partial Q$ and $\partial \hat{Q}$ always intersect exactly at two points.
\item For each $k,j$ we have the inequality
\eqn{key}
$$
2^{-k} \int_{\partial Q_{k,j}} |D\varphi(t)|^p dt \leq C \int_{2Q_{k,j}} |D\varphi(z)|^p dz.
$$
\end{enumerate}
\end{lemma}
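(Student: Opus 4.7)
The plan is to define the perturbed vertices by a Fubini/averaging argument. For each dyadic vertex $\tilde v$ of $\tilde{\mathcal D}_k$ I will choose a perturbation $v-\tilde v\in I\times I$, where $I=[2^{-k}/10-2^{-k}/40,\,2^{-k}/10]$; the four vertices $v$ corresponding to a dyadic square $\tilde Q_{k,j}$ are then the vertices of the new quadrilateral $Q_{k,j}$. Conditions (1) and (2) will be immediate from the fact that every shift is of order $2^{-k}/10$, which is much smaller than the side $2^{-k}$, so the new quadrilaterals at each level stay disjoint; condition (3) follows because all shifts lie in the same upper-right box $I\times I$, so moving a parent vertex and its descendants in the same direction keeps the boundary of the parent and the union of its four children intersecting at the two canonical points near the lower-left and upper-right corners.

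The analytic content is the estimate \eqref{key}. For a single edge $e=\overline{v_a v_b}$ with $|\tilde v_b-\tilde v_a|=2^{-k}$, parametrize $e$ as $(1-s)v_a+s v_b$, $s\in[0,1]$, so that
\[
\int_e |D\varphi|^p\,d\mathcal H^1 \;=\; |v_b-v_a|\int_0^1|D\varphi((1-s)v_a+s v_b)|^p\,ds.
\]
Averaging over $(p_a,p_b)\in(I\times I)^2$ and observing that the random point $(1-s)p_a+s p_b$ has a uniformly bounded density on the Minkowski sum $(1-s)(I\times I)+s(I\times I)=I\times I$, Fubini together with a change of variables yields
\[
\frac{1}{|I|^{4}}\iint_{(I\times I)^2}\int_e |D\varphi|^p\,d\mathcal H^1\,dp_a\,dp_b
\;\le\; \frac{C}{|I|}\int_{R_e}|D\varphi|^p,
\]
where $R_e$ is a planar tube of dimensions $\asymp 2^{-k}\times|I|$ that contains every possible position of $e$ and sits inside $2Q_{k,j}$. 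Since $|I|=2^{-k}/40$, this is the on-average version of \eqref{key}; summing over the four edges of $Q_{k,j}$ gives the analogous estimate for $\partial Q_{k,j}$.

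The delicate point is to pass from this average to a pointwise estimate holding simultaneously for every $(k,j)$. I would do this one level at a time: at level $k$, for each edge Markov's inequality bounds the measure of the set of perturbations of its two endpoints for which the per-edge estimate fails by more than a large fixed factor $M$. Then the locality of the constraints (each involves only two vertices, and the covering $\{2Q_{k,j}\}_j$ has bounded overlap) allows one to select the perturbations greedily, one vertex at a time in a tree-like order, so that every resulting $Q_{k,j}$ satisfies \eqref{key} with a constant independent of $k$.

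The main obstacle is precisely this final selection step: a naive union bound over all $2^{2k}$ quadrilaterals at level $k$ consumes whatever room Markov's inequality provides on a single edge. The argument must genuinely exploit the locality of the constraints and the bounded overlap of $\{2Q_{k,j}\}_j$, for instance by ordering the vertex choices so that each new perturbation affects only a bounded number of previously fixed neighbouring quadrilaterals, thereby turning what would be a global union bound into a local one.
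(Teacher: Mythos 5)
Your plan follows the same strategy the paper uses, which is to choose the new vertices by a Fubini/Markov argument and then check the two geometric conditions; the paper simply outsources the analytic heart of the selection to \cite[Section 4.2]{HP}, while you try to sketch it directly. The main substantive difference is cosmetic: \cite{HP} (and the paper) restrict the perturbation of each vertex to a one--dimensional diagonal subinterval $I_\varepsilon=\{[\tilde v_1+t,\tilde v_2+t]: t\in[\tfrac{1}{10}2^{-k}-\tfrac{1}{40}2^{-k},\tfrac{1}{10}2^{-k}]\}$, whereas you allow an independent shift in each coordinate, i.e.\ a two--dimensional box $I\times I$. That is perfectly compatible with condition \eqref{choiceofvertices}, and your averaging computation (the convolution of two uniforms on $(1-s)(I\times I)$ and $s(I\times I)$ has density $\le 4/|I|^2$, and Fubini over $s$ gives the tube factor $|I|\,2^k$) is correct; it also does not hurt condition (3), since the paper's argument there only uses the coordinatewise bounds in \eqref{choiceofvertices} and never the fact that the shift is diagonal.

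Where you should be more careful is precisely the point you yourself flag as ``the main obstacle.'' Saying ``select greedily in a tree-like order, exploiting bounded overlap of $\{2Q_{k,j}\}$'' is the right instinct but does not yet close the argument, and the phrase ``tree-like order'' is a bit of a red herring since the grid graph has cycles; the correct structural fact you need is simply that each vertex has at most four edge-neighbours. To make the greedy step airtight you need the standard \emph{forward/backward double Markov} bookkeeping. For a per-edge functional $F_e(p_a,p_b)=\int_e|D\varphi|^p$ whose average over $(p_a,p_b)\in(I\times I)^2$ is $\bar F_e\lesssim |I|^{-1}\int_{R_e}|D\varphi|^p$, do the following when you fix the perturbation $p_v$ of a vertex $v$ in (say) lexicographic order: (i) for each not-yet-fixed neighbour $w$ require that $p_v$ lie outside the bad set of the $p_v$--marginal $\frac{1}{|I|^2}\int F_{vw}(p_v,\cdot)$ (Markov excludes measure $\le|I|^2/M$ per such $w$), and (ii) for each already-fixed neighbour $u$ — whose $p_u$ was, by the earlier promise (i), outside the corresponding bad marginal set — require $F_{uv}(p_u,p_v)\le M'\cdot M\bar F_{uv}$ (Markov excludes measure $\le|I|^2/M'$ per such $u$). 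Since there are at most four neighbours in total, taking e.g.\ $M=M'=16$ leaves room in $I\times I$. Summing the four resulting per-edge estimates then yields \eqref{key}, with the constant depending only on $M,M'$ and the tube geometry. Without this two-stage structure, a naive per-edge Markov bound followed by a union over the $\approx 2^{2k+1}$ edges at level $k$ indeed fails, exactly as you noted. Finally, a small slip worth fixing: since both levels are shifted towards the upper-right and $Q$ is shifted \emph{more} than $\hat Q$, the two intersection points of $\partial Q$ and $\partial\hat Q$ in condition (3) sit near the upper-left and lower-right parts of the squares, not near the lower-left and upper-right corners as you wrote; this does not affect the validity of the claim but is the orientation the paper's geometric argument actually produces.
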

\begin{figure}[H]
\phantom{a}
\vskip 170pt
{\begin{picture}(0.0,0.0) 
     \put(-140.2,0.2){\includegraphics[width=0.80\textwidth]{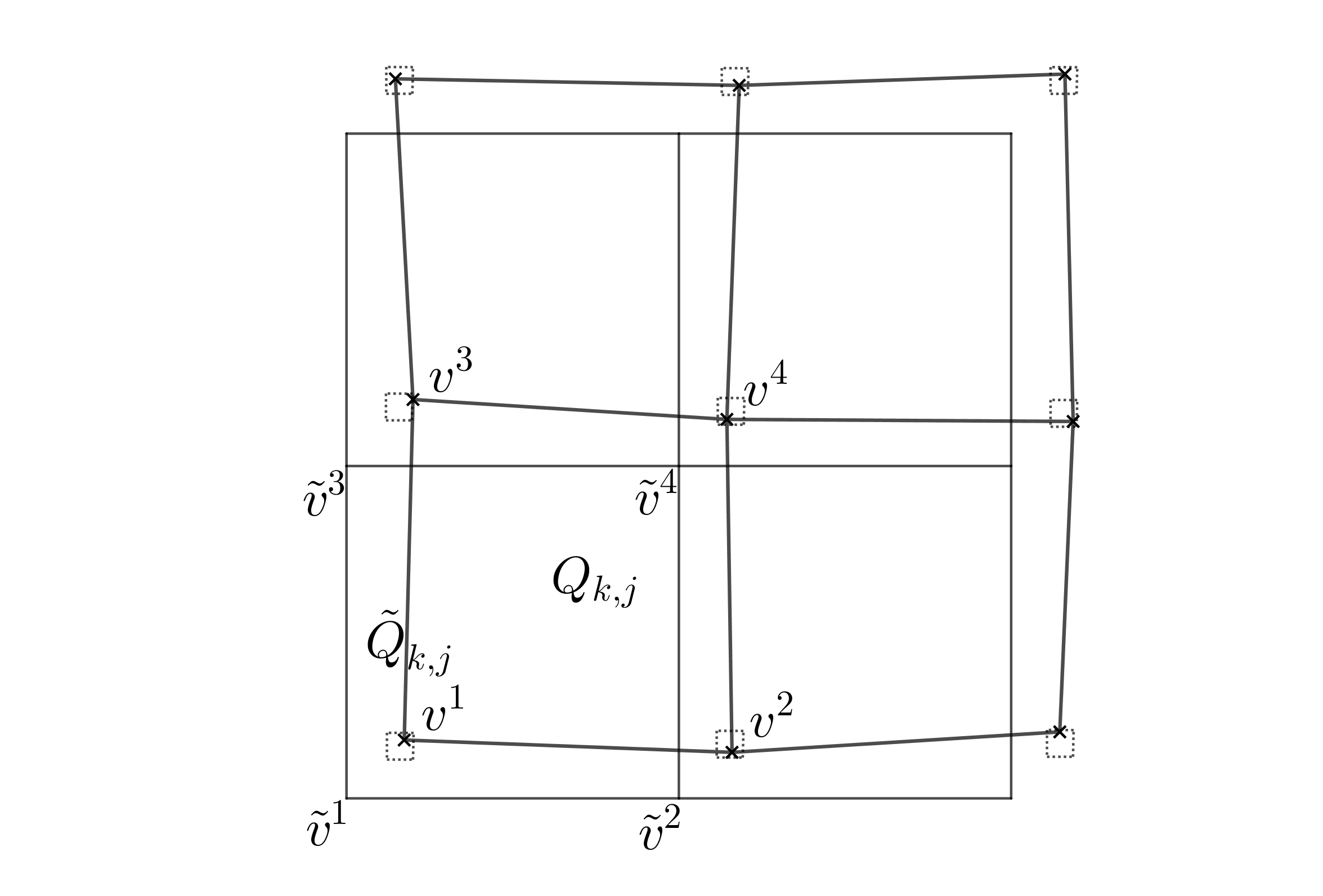}}
  \end{picture}
  }
\vskip -20pt	
\caption{Given a dyadic cube $\tilde{Q}_{k,j}$ with vertices $\tilde{v}^1, \tilde{v}^2, \tilde{v}^3, \tilde{v}^4$ we construct a quadrilateral $Q_{k,j}$ with vertices $v^1, v^2, v^3, v^4$. Each $v^i$ is close to $\tilde{v}^i$, it is slightly shifted to the top and to the right from $\tilde{v}^i$.}\label{newgrid}
\end{figure}
\begin{proof}
{\bf (1) and (4)}: Let us first explain that it is possible to choose the grid so that (1) is satisfied and we have the key inequality \eqref{key}. 

This follows essentially from \cite[Section 4.2]{HP} and therefore we only explain how to apply this approach here: All of our cubes in the $r=2^{-k}$ grid are of type A since we can freely move points outside of $Q_0$. 
We would like to apply analogy of \cite[Lemma 4.9]{HP}  for $M=0$ and $\varepsilon=\frac{1}{10}$. The only difference is that in \cite[Lemma 4.9]{HP} they choose
$$
[v_1,v_2]\in I_{\varepsilon}=\bigl\{[\tilde{v}_1+t,\tilde{v}_2+t]: |t|\leq \varepsilon 2^{-k}\bigr\}
$$ 
but we would like to make this choice in the subset of $I_{\varepsilon}$ (of length $1/8$ times the original length)
$$
[v_1,v_2]\in I=\bigl\{[\tilde{v}_1+t,\tilde{v}_2+t]: 
t\in[\tfrac{1}{10}2^{-k}-\tfrac{1}{40}2^{-k},\tfrac{1}{10}2^{-k}]\bigr\}. 
$$
This does not change anything substantial in the proof there, it only affects some multiplicative constants - use $8^2\frac{25}{\varepsilon r}$ instead of $\frac{25}{\varepsilon r}$ in the definition of $\Gamma(A,B,M)$ and then the proof carries through with obvious minor modifications. 
Then we can finish this step by applying analogy of \cite[Lemma 4.13 and Lemma 4.16]{HP} (again with slightly increased multiplicative constant) to get our \eqref{key}. 

{\bf (2)}: This is easy to see from the definition of vertices of $\Q_{k,j}$ in step (1) (see Fig \ref{newgrid}).

\begin{figure}
\phantom{a}
\vskip 170pt
{\begin{picture}(0.0,0.0) 
     \put(-150.2,0.2){\includegraphics[width=0.80\textwidth]{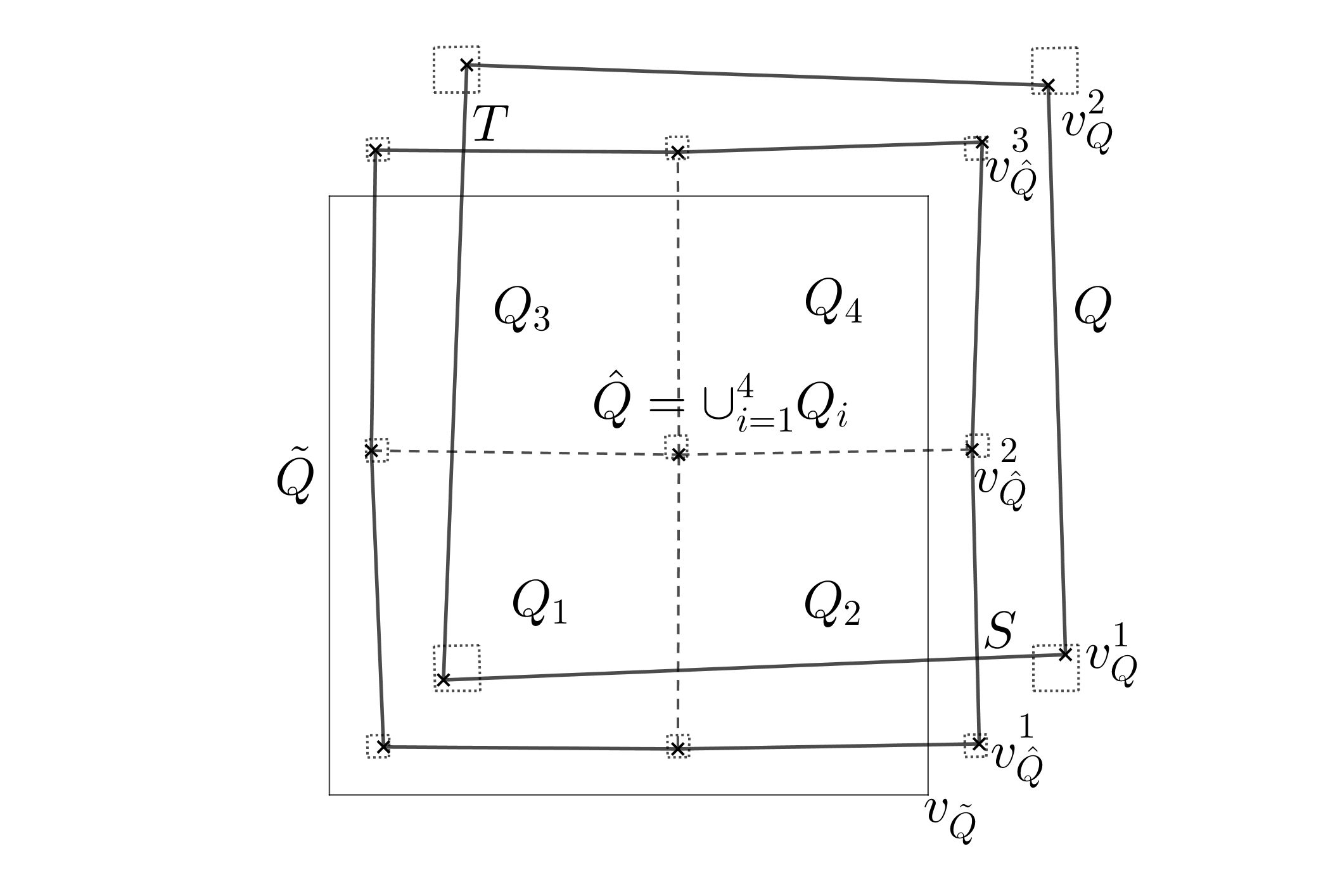}}
  \end{picture}
  }
\vskip -20pt	
\caption{Boundaries of $Q$ and $\hat{Q}=\cup_{i=1}^4 Q_i$ intersect at two points $S$ and $T$.}\label{intersectiongrid}
\end{figure}

{\bf (3)}: Let $Q$ and $\hat{Q}=\bigcup_{i=1}^4 Q_i$ be as in the statement (see Fig \ref{intersectiongrid}). 

Let us denote (as in figure) $v_{\tilde{Q}}$ the vertex of $\tilde{Q}$, $v_Q^1$ and $v_Q^2$ vertices of $Q$ and $v_{\hat{Q}}^1$, $v_{\hat{Q}}^2$, $v_{\hat{Q}}^3$ vertices of $\hat{Q}$ (in fact the corresponding side of $\hat{Q}$ is given by two segments 
$v_{\hat{Q}}^1 v_{\hat{Q}}^2$ and $v_{\hat{Q}}^2v_{\hat{Q}}^3$). From \eqref{choiceofvertices} we obtain for the x-coordinates of these points that
$$
(v_Q^1)_1-(v_{\tilde{Q}})_1, (v_Q^2)_1-(v_{\tilde{Q}})_1\in \Bigl[\frac{2^{-k}}{10}-\frac{2^{-k}}{40},\frac{2^{-k}}{10}\Bigr]
$$
and similarly from \eqref{choiceofvertices} for the choice of $\mathcal{D}_{k+1}$
$$
(v_{\hat{Q}}^1)_1-(v_{\tilde{Q}})_1, (v_{\hat{Q}}^2)_1-(v_{\tilde{Q}})_1, 
(v_{\hat{Q}}^3)_1-(v_{\tilde{Q}})_1\in 
\Bigl[\frac{2^{-(k+1)}}{10}-\frac{2^{-(k+1)}}{40},\frac{2^{-(k+1)}}{10}\Bigr]. 
$$
It follows that the distance of this side of $Q$ (=segment $v_Q^1 v_Q^2$) and 
this side of $\hat{Q}$ (=union of segments 
$v_{\hat{Q}}^1 v_{\hat{Q}}^2$ and $v_{\hat{Q}}^2v_{\hat{Q}}^3$) is at least 
$\frac{2^{-k}}{10}-\frac{2^{-k}}{40}-\frac{2^{-(k+1)}}{10}=\frac{2^{-k}}{40}$ and thus these two sides do not intersect. By a similar reasoning on other sides we obtain that $\partial Q$ and $\partial \hat{Q}$ intersect at exactly two points $S$ and $T$ as in Figure \ref{intersectiongrid}.

Let us also note that the distance of $S$ and $v_Q^1$ (and similarly distance of $S$ and $v_{\hat{Q}_1}$) is at least $\frac{2^{-k}}{40}$ and thus these intersection points are not too close to the vertices of $\partial Q$ and $\partial \hat{Q}$.
\end{proof}

\begin{definition}\label{def:grids} Note that conditions (1)-(3) above do not involve the boundary map $\varphi$. Hence we may define that any set $\mathcal{D}_k$ of quadrilaterals $Q_{k,j}$ satisfying the conditions (1)-(3) is called a \emph{good modification} of the standard dyadic decomposition of $Q_0$. 
\end{definition}




\begin{proof}[Proof of Theorem \ref{thm:mainsobo}]
Let us know that the statement is obvious if $p\geq q$ as we can use the trivial radial extension. In the following we thus assume that $p<q$. 

Given a homeomorphism $\varphi\in W^{1,p}_{\loc}(\er^2,\er^2)$ we were able to find in Lemma \ref{lem:refinesquares} a good modification $\mathcal{D}_k$ of the dyadic grid so that \eqref{key} holds. We could start with a homeomorphism $\varphi\in W^{1,p}(\mathbb{S},\mathbb{S})$ and some analogy of dyadic grid on $\mathbb{S}$. 
Analogously to the proof of Lemma \ref{lem:refinesquares} we could find a good modification $\mathcal{D}_k$ of this grid on $\mathbb{S}$ so that analogy of \eqref{key} holds for $\varphi$. In fact the whole statement could be also obtained locally using bilipschitz change of variables. 
Note that in our dyadic grid $\mathcal{D}_k$ we have $n_k\approx 2^{2k}$ bi-Lipschitz squares of diameter $\approx 2^{-k}$ and $\mathcal{H}^1(\partial Q_{k,j})\approx 2^{-k}$.


In view of Theorem \ref{thm:mainsum} it is now enough to show finiteness of 
\eqref{eq:discretehomeoextension}. 
Using H\"older's inequality, \eqref{key}, $q/p\geq 1$  and $p>\frac{2}{3}q$ we obtain 
$$
\begin{aligned}
\sum_{k=1}^\infty\sum_{j=1}^{n_k} 2^{-(3-q)k} &\mathscr{H}^1(\varphi(\partial Q_{k,j}))^q \\
&\leq\sum_{k=1}^\infty\sum_{j=1}^{n_k}2^{-(3-q)k}\Bigl(\int_{\partial Q_{k,j}}|D\varphi|\Bigr)^q\\
&\leq \sum_{k=1}^\infty\sum_{j=1}^{n_k}2^{-(3-q)k}
\Bigl(\Bigl(\int_{\partial Q_{k,j}}|D\varphi|^p\Bigr)^{\frac{1}{p}}(2^{-k})^{1-\frac{1}{p}}\Bigr)^q\\
&\leq C\sum_{k=1}^\infty 2^{-(3-q)k} 2^{-k(q-\frac{q}{p})}\sum_{j=1}^{n_k}\Bigl(\Bigl(2^k\int_{2  Q_{k,j}}|D\varphi|^p\Bigr)^{\frac{1}{p}}\Bigr)^q\\
&\leq C\sum_{k=1}^\infty 2^{-k(3-\frac{q}{p})}2^{k\frac{q}{p}}\sum_{j=1}^{n_k}\int_{2 Q_{k,j}}|D\varphi|^p\\
&\leq C\sum_{k=1}^\infty 2^{-k(3-2\frac{q}{p})}<\infty.\\
\end{aligned}
$$
\end{proof}

The aim of the next lemma is to consider the modified dyadic grid given by Lemma \ref{lem:refinesquares}. For each level $k$, we then look at the image of the grid of level $k$ under $\varphi$ (specifically the set $\varphi(\cup_j \partial Q_{k,j})$). The aim is to modify this "image grid" so that instead of general Jordan curves it consists of curves which are piecewise linear. It is necessary to preserve both the topology of the image grid and the lengths of the image curves. This piecewise linear approximation will simplify future computations. 

\begin{lemma}\label{lem:piecewiselinear} Let $p \geq 1$ and $\varphi : \overline{Q_0} \to \overline{Q_0}$ be a homeomorphism in the space $\varphi \in W^{1,p}(Q_0,\er^2)$. Let $\mathcal{D}_k$ be the set of modified dyadic quadrilaterals given by Lemma \ref{lem:refinesquares}. In particular, the Jordan curves $\varphi(\partial Q_{k,j})$ for each $Q_{k,j} \in \mathcal{D}_k$ each have finite length. Then for each quadrilateral $Q_{k,j}$ there exists a corresponding closed Jordan curve $\Gamma_{k,j} \subset \overline{Q_0}$ on the image side such that.
\begin{enumerate}
\item Each of the curves $\Gamma_{k,j}$ is piecewise linear.
\item Each point on the curve $\Gamma_{k,j}$ is of distance at most $2^{-k}$ from the set $\varphi(\partial Q_{k,j})$.
\item The inequality $\mathscr{H}^1(\Gamma_{k,j}) \leq \mathscr{H}^1(\varphi(\partial Q_{k,j}))$ holds.
\item $\Gamma_{k,j}$ passes through the four points $\varphi(v)$, where $v$ ranges over the four vertices of the quadrilateral $Q_{k,j}$. These four points are called the vertices of $\Gamma_{k,j}$.
\item If two quadrilaterals $Q_{k,j},Q_{k,j'} \in \mathcal{D}_k$ share a common side with endpoints $v_1,v_2$, then the subarcs of their corresponding image curves $\Gamma_{k,j}, \Gamma_{k,j'}$ with endpoints at the common vertices $\varphi(v_1)$ and $\varphi(v_2)$ are the same.
\item Apart from the cases where two curves $\Gamma_{k,j}, \Gamma_{k,j'}$ at the same level $k$ share either a single vertex or a single subarc between two vertices as before, these Jordan curves are mutually disjoint (for each fixed level $k$). 
\item For every $Q_{k,j}\in \mathcal{D}_k$ and $Q_{k+1,j'}\in \mathcal{D}_k$ (see Fig. \ref{intersectiongrid}) we know that 
$$
\Gamma_{k,j}\cap \Gamma_{k+1,j'}=\varphi(\partial Q_{k,j})\cap \varphi(\partial Q_{k+1,j'}) .
$$
That is each $\Gamma_{k,j}$ passes not only through its vertices but also through its intersection with grids of step $k+1$ and $k-1$, i.e. images of boundaries of $\mathcal{D}_{k+1}$ and $\mathcal{D}_{k-1}$. 
\end{enumerate}
\end{lemma}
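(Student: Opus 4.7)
The plan is to construct each $\Gamma_{k,j}$ by replacing the Jordan curve $\varphi(\partial Q_{k,j})$ with an inscribed polygonal curve whose vertices include a finite prescribed set of ``distinguished'' points, then inserting enough additional inscribed vertices to force the required closeness and disjointness properties. All numerical properties (length, distance) follow from classical facts about inscribed polygons in rectifiable Jordan curves, while the interaction between different curves needs to be managed by a careful inductive enumeration.

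First I would fix, for each pair $(k,j)$, the following finite set of distinguished points on $\varphi(\partial Q_{k,j})$: the four images $\varphi(v)$ of the vertices of $Q_{k,j}$, together with the (finitely many) intersection points $\varphi(\partial Q_{k,j})\cap\varphi(\partial Q_{k-1,j'})$ where $Q_{k-1,j'}$ is the unique parent region meeting $Q_{k,j}$, and the intersection points with $\varphi(\partial Q_{k+1,j''})$ for the four children. By Lemma~\ref{lem:refinesquares}(3), the boundary of a parent and the combined boundary of its children meet at exactly two points, so this set is indeed finite on each curve. These distinguished points cut each $\varphi(\partial Q_{k,j})$ into finitely many subarcs; declare two subarcs belonging to different curves to be \emph{the same arc} whenever one of the following holds: they are the common boundary between two adjacent quadrilaterals at the same level (condition (5)) or they are a shared sub-piece between two consecutive levels (condition (7)). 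The resulting collection of distinct arcs is countable; enumerate it as $\gamma_1,\gamma_2,\dots$.

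Next I would choose a polygonal approximation $\tilde\gamma_n$ for each arc $\gamma_n$ inductively. For each $n$, pick finitely many points on $\gamma_n$, including both endpoints (which are distinguished points), and let $\tilde\gamma_n$ be the inscribed polygonal path through them in order. Two standard facts are used: the length of any inscribed polygonal path in a rectifiable Jordan arc is at most the length of the arc; and as the mesh of the inscribed points tends to zero, the Hausdorff distance of $\tilde\gamma_n$ to $\gamma_n$ tends to zero. In particular, for each $n$ we may take the mesh small enough so that: (a) $\tilde\gamma_n$ lies in the $\min(2^{-k},\varepsilon_n)$-neighbourhood of $\gamma_n$, where $k$ is the level of the corresponding quadrilateral and $\varepsilon_n>0$ is chosen below; (b) $\tilde\gamma_n$ is a simple polygonal arc (possible because $\gamma_n$ is itself simple and the inscribed polygonal arc is simple whenever the mesh is small enough); (c) $\tilde\gamma_n$ is disjoint from all previously constructed $\tilde\gamma_1,\dots,\tilde\gamma_{n-1}$ except at the prescribed common distinguished endpoints. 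For (c) we use that the open arcs $\gamma_n\setminus\{\text{endpoints}\}$ and $\gamma_m\setminus\{\text{endpoints}\}$ (for $m<n$) are compact disjoint sets, and the previously built $\tilde\gamma_m$ are close to $\gamma_m$; choose $\varepsilon_n$ smaller than half the minimum positive distance between $\gamma_n\setminus N$ and the finite union $\bigcup_{m<n}\tilde\gamma_m$ (where $N$ is a tiny neighbourhood of the distinguished endpoints common to both), and make the mesh at the distinguished endpoints fine enough that the small initial and final segments of $\tilde\gamma_n$ emanating from each shared endpoint avoid the corresponding segments on previously built curves.

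Finally I would verify the seven properties. Property (1) is immediate. Property (3) is the inscribed-polygon length inequality. Property (2) follows from the Hausdorff distance bound in (a). Property (4) follows because each vertex $\varphi(v)$ is a distinguished point, hence a vertex of $\tilde\gamma_n$ on every arc meeting it, and therefore lies on $\Gamma_{k,j}$. Property (5) follows because the identification ``shared arc'' in the enumeration ensures we assign a single polygonal approximation to an arc that appears on two neighbouring curves. Property (7) is similar, using that $\varphi(\partial Q_{k,j})\cap\varphi(\partial Q_{k+1,j'})$ is a subset of the distinguished points and that by construction $\Gamma_{k,j}\cap\Gamma_{k+1,j'}$ contains no other points. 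Property (6) is the inductive disjointness statement ensured by the choice of $\varepsilon_n$. The main obstacle is clearly property (6) together with the consistency conditions (5) and (7): we must simultaneously ensure that the polygonal approximations on distinct arcs remain disjoint, share only the prescribed endpoints, and inherit a coherent identification between different levels. The inductive enumeration combined with the freedom to refine the mesh handles this, but the identification of ``shared arcs'' across levels must be done before any approximation is chosen, so that the same $\tilde\gamma_n$ is used whenever an arc appears in multiple curves.
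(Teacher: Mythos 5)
Your overall plan (approximate each image arc by an inscribed polygon through prescribed points, use the inscribed-polygon length inequality for (3), Hausdorff closeness for (2), and an inductive choice of tolerances for disjointness) matches the paper's treatment of the arcs \emph{away} from the distinguished points, but the crux of Lemma \ref{lem:piecewiselinear} is precisely the behaviour \emph{at} the shared distinguished points, and there your argument has a genuine gap. First, the parenthetical claim in (b) that ``the inscribed polygonal arc is simple whenever the mesh is small enough'' is not true for a general rectifiable simple arc; the paper does not claim it either, and instead repairs self-intersections by passing to a shortest path inside the union of the inscribed segments (which only decreases length, so (3) survives). Second, and more seriously, your mechanism for (c) at a shared endpoint $P$ (an image of a vertex, or a crossing point of consecutive grids) does not work. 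Several image arcs emanate from $P$, and a $W^{1,p}$ homeomorphism may send them into interleaved spirals winding infinitely often around $P$. Your construction forces every approximating polygon to reach $P$ by a straight chord from a nearby point of its arc. Once one arc's polygon has been built, ending with a chord $[x_m,P]$, any inscribed polygon of another incident arc which follows that arc closely down to a radius smaller than $|x_m|$ and then shortcuts to $P$ must cross the segment $[x_m,P]$: a path whose argument around $P$ increases by more than $2\pi$ while its distance to $P$ stays in $(0,|x_m|)$ necessarily meets that segment. Refining the mesh near $P$ makes this \emph{worse}, not better, so ``make the mesh at the distinguished endpoints fine enough'' cannot deliver (c), (6) or (7). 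Relatedly, your $\epsilon_m$ at step $m$ is chosen only against previously built polygons, so the ``minimum positive distance between $\gamma_n\setminus N$ and $\bigcup_{m<n}\tilde\gamma_m$'' invoked later may be zero, and a polygon built early can transversally cross a later arc, making the later step impossible while keeping closeness.

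This is exactly the point where the paper's proof does real work (Steps 1 and 3): around every point of $\mathcal{V}_k$ it fixes a small ball whose radius is chosen so small that only the incident arcs enter it (with nested radii $r_{k+1}<r_k$ to coordinate consecutive levels), it \emph{discards} the whole, possibly wildly winding, portions of the incident image arcs inside the ball beyond their last exit points, and replaces them by straight radial segments to the centre; these segments meet only at the centre, the discarded pieces are at least as long as their replacements, and after this surgery the remaining arcs are compact and at positive mutual distance, so disjoint $\delta$-tubes plus inscribed polygons (your Step, together with the shortest-path fix) give (5)--(7). Without an ingredient of this kind your induction does not close. A smaller inaccuracy: for (7) as stated you must include among the distinguished points the intersections of $\varphi(\partial Q_{k,j})$ with \emph{all} level-$(k\pm1)$ quadrilateral boundaries (children of neighbours can also cross $\partial Q_{k,j}$), not only those of the parent and of its own four children; also, consecutive-level curves share finitely many points rather than subarcs, so the ``shared arc'' identification is only relevant within a fixed level.
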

\begin{proof}
In this proof we use ideas of \cite{DP} and \cite{HP} where a similar piecewise linear approximation of curves was used. We first explain how to do this for a single level $\mathcal{D}_k$ and then we explain that we can even manage that (7) is satisfied. 

\begin{figure}
\phantom{a}
\vskip 160pt
{\begin{picture}(0.0,0.0) 
     \put(-150.2,0.2){\includegraphics[width=0.90\textwidth]{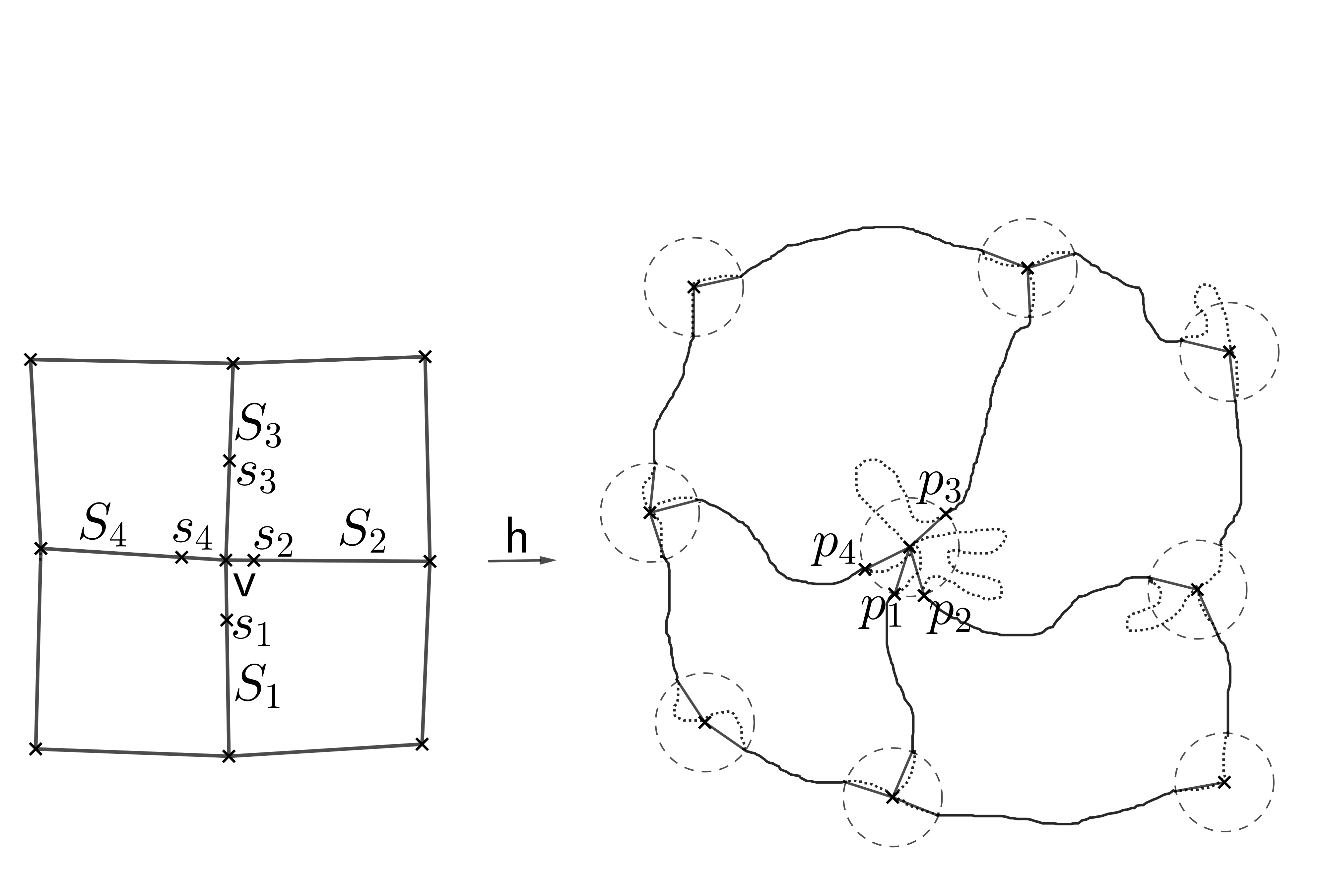}}
  \end{picture}
  }
\vskip -20pt	
\caption{We replace original curve near vertices (see dotted curves) by segments near vertices.}\label{linearvertex}
\end{figure}

{\underline{Step 1. Linearization near vertices:}} For each vertex $v$, $v$ is a vertex of some $Q_{k,j}$, we choose a ball $B(\varphi(v),r)$. We choose $r>0$ small enough so that balls $B(\varphi(v),2r)$ are pairwise disjoint and (using uniform continuity of $\varphi^{-1}$ and $\varphi$) so that 
\eqn{notfar}
$$
\text{ for every }x\in B\bigl(v,\diam(\varphi^{-1}(B(\varphi(v),r))\bigr)\text{ we have }|\varphi(x)-\varphi(v)|<2^{-k}. 
$$

For each vertex $v$ we have four sides $S_1,S_2,S_3$ and $S_4$ of some $Q_{k,j}$ that have $v$ as their endpoint (see Fig. \ref{linearvertex}). On each of these sides we choose points $s_i\in S_i$ so that $p_i=\varphi(s_i)\in \partial B(\varphi(v),r)$ and so that $s_i$ is furthest away from $v$ with this property (e.g. on $S_3$ in Fig. \ref{linearvertex} we have three points whose image intersects $\partial B(\varphi(v),r)$). Now we replace $\varphi$ on each segment $[s_i,v]$ by a segment $[p_i, \varphi(v)]$ and we leave $\varphi$ the same outside of these four segments (see Fig. \ref{linearvertex}). In this way we replace $\varphi(\partial Q_{k,j})$ by a curve $\tilde{\Gamma}_{k,j}$ which is piecewise linear close to the vertices. 

It is easy to see that this new curve $\tilde{\Gamma}_{k,j}$ satisfies analogy of (2) by \eqref{notfar} and it is not difficult to see that these new curves are one-to-one (see Fig. \ref{linearvertex}), i.e. they intersect only at original vertices $v$. These new curves have also length  shorter or equal to the original $\mathscr{H}^1(\varphi(\partial Q_{k,j}))$.   

\begin{figure}
\phantom{a}
\vskip 180pt
{\begin{picture}(0.0,0.0) 
     \put(-220.2,0.2){\includegraphics[width=1.10\textwidth]{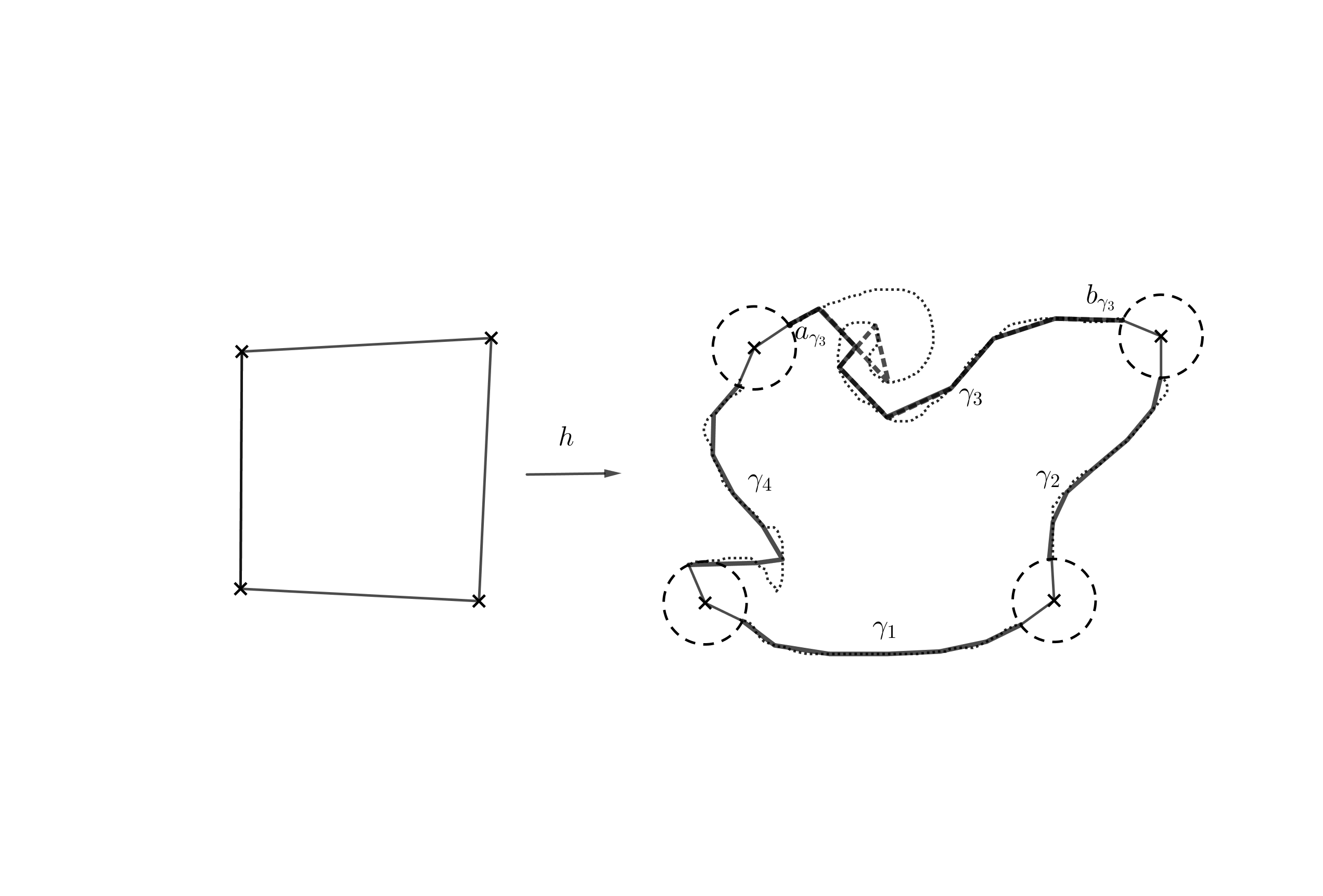}}
  \end{picture}
  }
\vskip -60pt	
\caption{We replace curves $\gamma_m$ on the sides (see dotted curves) by piecewise linear curves. We may need to choose a one-to-one shortening of these replacements, i.e we ignore some dashed part of the replacement of $\gamma_3$.}\label{linearsides}
\end{figure}

{\underline{Step 2. Linearization of sides:}} Now we need to change $\tilde{\Gamma}_{k,j}$ so it is piecewise linear not only close to the vertices. We call $\gamma_{k,m}$ the parts of $\tilde{\Gamma}_{k,j}$ where our curve is not piecewise linear yet, these correspond to image by $\varphi$ of sides of $Q_{k,j}$ (minus segments $[s_i,v]$ near vertices). These 
$\gamma_{k,m}$ are pairwise disjoint and we can choose $0<\delta<2^{-k}$ so that $\gamma_{k,m}+B(0,2\delta)$ are pairwise disjoint. We choose enough division points in $\gamma_{k,m}$ and we connect them by segments (see Fig. \ref{linearsides}) so that the union of these segments approximates the original curve. We definitely include two endpoints $a_{\gamma_{k,m}}$ and $b_{\gamma_{k,m}}$ in these division points and we assume that we have so many division points so that the union of these segments lies inside $\gamma_{k,m}+B(0,\delta)$. It follows that these segments for different $\gamma_{k,m}$ do not intersect. 

However, it may happen that they intersect (see $\gamma_3$ in Fig \ref{linearsides}) for a given $\gamma_{k,m}$. In this case we simply choose a shortest path in the union of these segments between the endpoints $a_{\gamma_{k,m}}$ and $b_{\gamma_{k,m}}$ and we replace the union of these segment by this shortest path (see the right side of Fig \ref{linearsides}). It is not difficult to see that by this replacement we get a one-to-one piecewise linear curve that replaces $\gamma_{k,m}$.  
Now we call $\Gamma_{k,j}$ the corresponding piecewise linear approximation of $\tilde{\Gamma}_{k,j}$. 
It is easy to see that we have $(1)$, $(2)$ (using $\delta<2^{-k}$), $(3)$, $(4)$, $(5)$ and $(6)$ for our $\Gamma_{k,j}$.

{\underline{Step 3. Intersection of $\Gamma_{k,j}$ and $\Gamma_{k+1,j'}$}:} 
We need to do linearization of the grid that not only preserves the vertices but also preserves the intersection of neighboring grids. We define the grids  
$$
\mathcal{G}_0=\emptyset \text{ and }\mathcal{G}_k=\bigcup_j \varphi(\partial Q_{k,j})
$$
and the set of vertices for $k\in\en$ as 
$$
\mathcal{V}_k=\bigl\{\varphi(v):\ v\text{ is a vertex of some }Q_{k,j}\bigr\}\cup
\bigl(\mathcal{G}_k\cap \mathcal{G}_{k+1}\bigr)\cup \bigl(\mathcal{G}_k\cap \mathcal{G}_{k-1}\bigr). 
$$
Analogously to the reasoning in the proof of Lemma \ref{lem:refinesquares} $(3)$ we obtain that $\varphi^{-1}(\mathcal{G}_k\cap \mathcal{G}_{k+1})$ is finite (see Fig. \ref{intersectiongrid}) and thus it is not difficult to see that $\mathcal{V}_k$ is finite. Moreover, it is possible to show analogously to the proof Lemma \ref{lem:refinesquares} $(3)$ that there is $C>0$ with (see Fig. \ref{intersectiongrid}) 
\eqn{faraway}
$$
|\varphi^{-1}(y)-\varphi^{-1}(z)|\geq C 2^{-k}\text{ for every distinct }y,z\in \mathcal{V}_k.
$$

We choose $r_k>0$ so that $B(v,2 r_k)$, $v\in \mathcal{V}_k$, are pairwise disjoint and so that 
$$
\text{ for every }x\in B\bigl(v,\diam(\varphi^{-1}(B(\varphi(v),r_k))\bigr)\text{ we have }|\varphi(x)-\varphi(v)|<2^{-k}. 
$$
We further assume that $r_{k+1}<r_k$ and we choose balls around $\mathcal{V}_k$ as 
$$
B(v,r_{k+1})\text{ for }v\in \mathcal{G}_k\cap \mathcal{G}_{k+1}\text{ and }
B(v,r_{k})\text{ for other }v\in \mathcal{V}_k. 
$$
In each such a ball we do a linearization as in Step 1. Note that this works fine as 
for $v\in \mathcal{G}_k\cap \mathcal{G}_{k+1}$ we have $B(v,r_{k+1})$ both for linearization of $\mathcal{G}_k$ and for linearization of $\mathcal{G}_{k+1}$ near this vertex so that the corresponding $\tilde{\Gamma}_{k,j}$ and $\tilde{\Gamma}_{k+1,j'}$ intersect only at vertices in $\mathcal{G}_k\cap \mathcal{G}_{k+1}$. 

As in Step 2. we call $\gamma_{k,m}$ the parts of $\tilde{\Gamma}_{k,j}$ where our curve is not piecewise linear yet, these correspond to image by $\varphi$ of sides of $Q_{k,j}$ (minus segments near all vertices of $\mathcal{V}_k$ where the curve is already linear). We choose $\delta_k<2^{-k}$ small enough so that not only 
$$
\gamma_{k,m}+B(0,2\delta_k)\text{ are piecewise disjoint}
$$
but also 
$$
\gamma_{k,m}+B(0,2\delta_k), \gamma_{k-1,m'}+B(0,2\delta_k)\text{ and }\gamma_{k+1,m''}+B(0,2\delta_k)\text{ do not intersect}. 
$$
We assume that $\delta_{k+1}<\delta_k$ and as in Step 2. we linearize $\gamma_{k,m}$ so that the corresponding piecewise linear curve is one-to-one and stays inside $\gamma_{k,m}+B(0,\delta_k)$. In this way we obtain $\Gamma_{k,j}$ as the linearization of $\varphi(Q_{k,j})$. 

Again it is easy to see that we have $(1)$, $(2)$ (using $\delta<2^{-k}$), $(3)$, $(4)$, $(5)$ and $(6)$ for our $\Gamma_{k,j}$. Moreover, it is not difficult to check that $(7)$ also holds in this situation. 
\end{proof}

{\bf Parametrization of $\Gamma_{k,j}$:} We have constructed a piecewise linear curve $\Gamma_{k,j}$ that approximated $\varphi(Q_{k,j})$ and keeps images of vertices in $\mathcal{V}_k$ fixed. We know that there are four 
$y\in\mathcal{V}_k$ such that $y=\varphi(v)$ for some vertex of $Q_{k,j}$. 
Further there are at most $8$ points in 
$$
\mathcal{G}_{k+1}\cap \varphi(Q_{k,j})=\mathcal{G}_{k+1}\cap \Gamma_{k,j}
$$
that is on image of each side of $Q_{k,j}$ there are at most two (see Fig \ref{intersectiongrid} and proof of Lemma \ref{lem:refinesquares} $(3)$). Further we have at most two points in $\mathcal{G}_{k-1}\cap \varphi(Q_{k,j})$, see Lemma \ref{lem:refinesquares} $(3)$. As we have already noted in \eqref{faraway} the distance of preimages of these points is comparable to sidelength of $Q_{k,j}$, i.e. $2^{-k}$. 

Now we divide $\Gamma_{k,j}$ into at most $4+8+2=14$ pieces $P_i$ by points in $\mathcal{V}_k$. For points 
$x\in \varphi^{-1}(\mathcal{V}_k\cap Q_{k,j})$  we define $p(x)=\varphi(x)$ so that our parametrization $p$ has the same value as original mapping $\varphi$ on these "vertices" and intersection points. We parametrize pieces $P_i$ by constant speed parametrization $p$ there, i.e. on each of those pieces it has constant speed which might be different for each piece. Since the length of these pieces is bounded by $\mathscr{H}^1(\varphi(Q_{k,j}))$ we obtain using \eqref{faraway} that 
$$
|Dp|\leq C\frac{\mathscr{H}^1(\varphi(Q_{k,j}))}{2^{-k}}\text{ on the whole }Q_{k,j}. 
$$


\section{The 2D extension}\label{sec:2d}

Let $S$ be the square with vertices at $\{(1,0),(0,1),(-1,0),(0,-1)\}$ and $\Y$ be a Jordan domain with piecewise linear boundary. Suppose that a boundary homeomorphism $\varphi : \partial S \to \partial \Y$ is given. We now describe a way to extend $\varphi$ as a homeomorphism of $\bar{S}$ to $\bar{\Y}$ with Lipschitz-continuity controlled by the boundary map.

First, we describe an extension $H_\varphi$ of $\varphi$ which is a \emph{monotone map} from $\bar{S}$ to $\bar{\Y}$, meaning it is continuous and the preimage of every point is connected. The final homeomorphic extension will be obtained via an arbitrarily small modification of $H_\varphi$ as we are able to describe the points where it fails to be injective and fix them accordingly. However, this modification will be done only later in Section \ref{sec:injective}.

The extension $H_\varphi$ will also be called the \emph{shortest curve extension} of $\varphi$. To define $H_\varphi$, we let $l_s$ denote the horizontal line segment which is obtained as the intersection between the line $\{(x,y) : y = s\}$ and $S$. This segment $l_s$ has two endpoints $a_s$ and $b_s$ (from left to right) on $\partial S$. We let $A_s = \varphi(a_s)$, $B_s = \varphi(b_s)$, and define $L_s$ as the shortest curve in $\bar{S}$ which connects $A_s$ to $B_s$.

The map $H_\varphi$ is now given by defining it to map each horizontal segment $l_s$ to the corresponding shortest curve $L_s$ via constant speed parametrization. It is simple to verify that this mapping is continuous.

\begin{lemma}\label{lem:shortestcurve} If $\varphi: \partial S \to \partial \Y$ is Lipschitz with constant $L$, then the shortest curve extension $H_\varphi$ is also Lipschitz with constant at most $CL$ for a uniform constant $C$.
\end{lemma}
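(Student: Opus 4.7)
The plan is to split the proof into three parts: a length bound on each geodesic $L_s$, the horizontal Lipschitz estimate, and the vertical (and hence full) Lipschitz estimate, with the last being the most delicate.

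First, I would establish that $\mathcal{H}^1(L_s) \leq \sqrt{2}\,L\,|l_s|$ for every $s\in[-1,1]$. Indeed, $L_s$ is the shortest curve in $\overline{\Y}$ connecting $A_s=\varphi(a_s)$ and $B_s=\varphi(b_s)$, so its length is bounded by the length of the $\varphi$-image of the shorter of the two arcs of $\partial S$ joining $a_s$ and $b_s$. For $s\geq 0$ that arc passes through the vertex $(0,1)$ and has length $2\sqrt{2}(1-|s|)=\sqrt{2}\,|l_s|$; the case $s\leq 0$ is symmetric. The Lipschitz continuity of $\varphi$ then yields the bound. Consequently the constant-speed parametrization of $l_s$ onto $L_s$ stretches horizontal distances by a factor $\mathcal{H}^1(L_s)/|l_s|\leq\sqrt{2}\,L$, proving
\begin{equation*}
|H_\varphi(x_1,s)-H_\varphi(x_2,s)|\leq\sqrt{2}\,L\,|x_1-x_2| \quad\text{whenever } (x_1,s),(x_2,s)\in l_s.
\end{equation*}

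Next, to compare $H_\varphi$ at nearby heights I would fix $x$ and bound $|H_\varphi(x,s_1)-H_\varphi(x,s_2)|$ by $CL|s_1-s_2|$. Since $a_s$ and $b_s$ are $\sqrt{2}$-Lipschitz in $s$ along $\partial S$, the endpoints satisfy $|A_{s_1}-A_{s_2}|,\,|B_{s_1}-B_{s_2}|\leq\sqrt{2}\,L\,|s_1-s_2|$. For a convex target $\Y$ the curves $L_s=[A_s,B_s]$ are straight segments, giving the explicit formula
\begin{equation*}
H_\varphi(x,s)=\tfrac{1}{2}(A_s+B_s)+\tfrac{x}{2(1-|s|)}(B_s-A_s),
\end{equation*}
and a direct differentiation in $s$ (using $|x|\leq 1-|s|$ and the endpoint Lipschitz bounds to cancel the apparent singularity at $|s|=1$) shows $|\partial_sH_\varphi|\leq CL$ almost everywhere; combined with the horizontal bound, this handles the convex case.

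The main obstacle is the non-convex case, where $L_s$ may bend around reflex corners of the piecewise linear boundary $\partial\Y$. I would use two features. First, shortest curves are non-crossing: $L_{s_1}\cap L_{s_2}$ is either empty or a common subarc, since otherwise one could reroute one of them to obtain a strictly shorter curve with the same endpoints, so $\{L_s\}$ is a monotone foliation of $\overline{\Y}$. Second, since $\partial\Y$ is piecewise linear each $L_s$ is itself a polygonal path with vertices at $A_s$, $B_s$, or at reflex corners of $\partial\Y$; the combinatorial sequence of reflex corners visited by $L_s$ changes at only finitely many values of $s$, and between such transitions $L_s$ depends affinely on $(A_s,B_s)$ with Lipschitz constant $1$ (the interior bending vertices remain fixed), while at a transition the passage through a corner preserves continuity. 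Combining these observations with the length control of Step 1 and the endpoint estimate yields $|L_{s_1}(t)-L_{s_2}(t)|\leq CL|s_1-s_2|$ uniformly in $t\in[0,1]$, and together with the horizontal bound this completes the proof via a triangle inequality along a suitable horizontal-then-vertical concatenation inside $\overline{S}$.
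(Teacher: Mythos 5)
Your Steps 1--2 (length bound $|L_s|\le CL|l_s|$ via the shorter boundary arc, hence the horizontal Lipschitz bound from the constant-speed parametrization) are correct and coincide with Case~1 of the paper's proof, and your explicit formula handles the convex case. But the vertical estimate for a general piecewise linear (non-convex) $\Y$, which is the heart of the lemma, is asserted rather than proved, and the justification you give is not correct as stated. Within a fixed ``combinatorial regime'' the \emph{parametrized} curve $t\mapsto L_s(t)$ is not an affine, $1$-Lipschitz function of the endpoints $(A_s,B_s)$: the map $H_\varphi$ is defined by constant-speed parametrization, so the position of the tracked point depends on the total length $|L_s|$ and on the lengths of the two terminal segments through a normalization, which is genuinely nonlinear in $(A_s,B_s)$. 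Even when the interior bending vertices are fixed, the point at a given parameter can slide along the fixed middle portion by an amount controlled by $\bigl|\,t\,|L_{s_1}|-t\,|L_{s_2}|\,\bigr|$ together with the change of the initial segment's length; turning the endpoint estimate $|A_{s_1}-A_{s_2}|,|B_{s_1}-B_{s_2}|\le CL|s_1-s_2|$ into the bound $|L_{s_1}(t)-L_{s_2}(t)|\le CL|s_1-s_2|$ requires exactly the arc-length bookkeeping that the paper carries out: first $\bigl||\alpha_{s_1}|-|\alpha_{s_2}|\bigr|\le L|s_1-s_2|$ and $\bigl||L_{s_1}|-|L_{s_2}|\bigr|\le 2L|s_1-s_2|$ (using that both curves are shortest), then a case analysis according to whether the tracked point lies on the common middle part $\gamma$ or on one of the moving terminal segments; the latter case needs an extra idea (in the paper, an auxiliary point $\omega$ on the other terminal segment chosen by convexity). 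Your proposal neither performs this computation nor addresses the terminal-segment case, so the key quantitative step is missing.

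A secondary issue: the claim that the combinatorial sequence of reflex corners visited by $L_s$ changes at only finitely many values of $s$ is not justified for a boundary map that is merely Lipschitz (the endpoints $A_s,B_s$ move continuously and monotonically along $\partial\Y$, but nothing more is known), and in any case it is not needed. The paper avoids it entirely by arguing locally: for $\delta$ small the two shortest curves $L_s$ and $L_{s+\delta}$ decompose into two pairs of terminal segments meeting at common points plus a common middle arc, and all estimates are proved for this local configuration, with the global Lipschitz bound following since the constants are uniform. If you replace your ``affine dependence'' assertion by this local comparison and the explicit length estimates above (including the case of a point on a terminal segment), your argument becomes essentially the paper's proof.
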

\begin{proof}

\emph{Case 1.} Lipschitz continuity in the horizontal direction.\\\\
We show that $H_\varphi$ satisfies the required Lipschitz-continuity on each of the horizontal segments $l_s$. For this, note that the constant speed parametrization on each of these segments implies that we only need to show that $|L_s| \leq 2L |l_s|$, where $|\cdot|$ denotes the one-dimensional Hausdorff measure. The endpoints of $l_s$ separate $\partial S$ into two connected components, the shorter of which we may call $\gamma_s$. Since $L_t$ is the shortest curve from $A_s$ to $B_s$, we find that $|\varphi(\gamma_s)| \geq |L_s|$. However, due to the Lipschitz-continuity of $\varphi$ we must have that $|\varphi(\gamma_s)| \leq L |\gamma_s|$. Thus
\[|L_s| \leq |\varphi(\gamma_s)| \leq L |\gamma_s| \leq 2 L |l_s|,\]
where the last inequality is due to the fact that $l_s$ is the hypotenuse of a right-angled triangle with sides given by $\gamma_s$.\\\\
\emph{Case 2.} Lipschitz continuity in the vertical direction.\\\\
Let us fix $s \in (-1,1)$ and pick a point $z \in l_s$. For small $\delta$ we let $z_\delta = z + i\delta$ and our aim is to show that $|H_\varphi(z_\delta) - H_\varphi(z)| \leq C L \delta$. As Lipschitz-continuity is a local property, we may assume that $\delta$ is arbitrarily small. In fact, to simplify calculations we assume that $\delta$ is very small compared to $|l_s|$, which lets us assume that the trapezium bounded by the segments $l_s$ and $l_{s+\delta}$ is actually a rectangle with longer sides of length $|l_s|$ due to the fact that these two shapes are bilipschitz-equivalent with a uniform constant (say $2$) for small enough $\delta$.

Consider the curves $L_s$ and $L_{s+\delta}$. By choosing $\delta$ small enough, we may assume that the endpoints $A_s$ and $A_{s+\delta}$ lie on the same line segment of the piecewise linear boundary $\partial \Y$. The same may be assumed for $B_s$ and $B_{s+\delta}$. Now basic geometry dictates that the curves $L_s$ and $L_{s + \delta}$ must each consist of three parts as follows (for a detailed argument, see \cite{HP}). See also Figure \ref{fig:short4}.

\begin{enumerate}
\item $\alpha_s$ and $\alpha_{s+\delta}$: Curves which start from $A_s$ and $A_{s+\delta}$ and do not intersect except at their common other endpoint. In fact, if $\delta$ is assumed small enough these curves may be assumed to be line segments.
\item A common part of $l_s$ and $L_s$, which is a piecewise linear curve we denote by $\gamma$.
\item $\beta_s$ and $\beta_{s+\delta}$: Analogously to the first part, these can be assumed to be line segments from $B_s$ and $B_{s+\delta}$ respectively which meet at a common point (the other endpoint of $\gamma$).
\end{enumerate}

\begin{figure}[H]\label{fig:short4}
\includegraphics[scale=0.6]{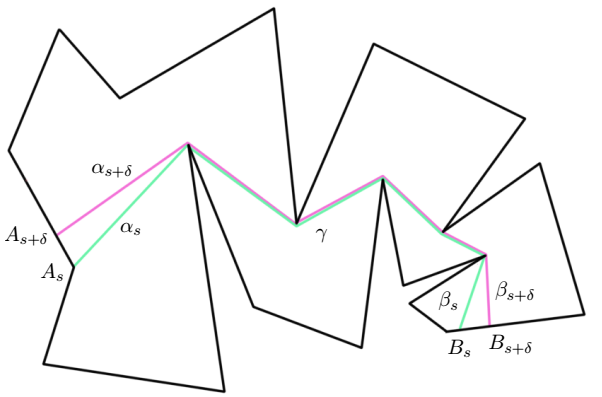}
\caption{The shortest curves $L_s$ and $L_{s+\delta}$, split into three parts.}
\end{figure}

We may assume that $H_\varphi(z)$ lies on either $\alpha_s$ or $\gamma$ as the case where it lies on $\beta_s$ is handled by symmetry. Let $D$ denote the line segment between $a_s$ and $a_{s+t}$. Then since $\varphi$ is $L$-Lipschitz-continuous on $\partial S$, we find that $|D| \leq L \delta$. By the triangle inequality we obtain that $||\alpha_s| - |\alpha_{s+\delta}|| \leq L \delta$ and using the same argument for the $\beta$-curves gives $||L_s| - |L_{s+\delta}|| \leq 2 L \delta$. Let also $d$ denote the distance between $z$ and $a_s$, which is also the distance from $z_\delta$ to $a_{s+\delta}$.

Suppose first that $H_\varphi(z)$ lies on $\gamma$. The length of the part of $L_s$ between $A_s$ and $H_\varphi(z)$ may now be calculated in two ways. The constant speed parametrization tells us that it is equal to $|L_s| d/|l_s|$. On the other hand, it is also equal to $|\alpha_s| + |\gamma'|$, where $\gamma'$ denotes the part of $\gamma$ between $\alpha_s$ and $H_\varphi(z)$. Thus
\[|\alpha_s| + |\gamma'| = \frac{|L_s| d}{|l_s|}.\]
If $\Gamma$ denotes the part of $L_{s+\delta}$ between $H_\varphi(z)$ and $H_\varphi(z_\delta)$, then we may calculate the length of the part of $L_{s+\delta}$ between $a_{s+\delta}$ and $H_\varphi(z_\delta)$ in two ways similarly as above to obtain that
\[|\alpha_{s+\delta}| + |\gamma'| \pm |\Gamma| = \frac{|L_{s+\delta}| d}{|l_s|}.\]
The $\pm$ in this equation is there to account for the two cases on which side of $L_{s+\delta}$ the point $H_\varphi(z_\delta)$ lies in comparison to $H_\varphi(z)$. In either case, we find by combining the above two equalities that
\begin{align*}|\Gamma| &\leq ||\alpha_s| - |\alpha_{s+\delta}|| + ||L_s| - |L_{s+\delta}|| \frac{d}{|l_s|}\\
&\leq L \delta + 2 L \delta.
\end{align*}
This shows that $|H_\varphi(z_\delta) - H_\varphi(z)| \leq 3L \delta$.

Suppose then that $H_\varphi(z)$ lies on $\alpha_s$. The length of the part of $\alpha_s$ from $a_s$ to $H_\varphi(z)$ must then be equal to $|L_s| d/|l_s|$ by constant speed parametrization. Let $\omega$ be a point on $\alpha_{s+\delta}$ of distance at most $|D|$ from $H_\varphi(z)$, which is possible to choose due to convexity. Let $\gamma^*$ denote the part of $\alpha_{s+\delta}$ between $a_{s+\delta}$ and $\omega$, and $\Gamma$ the part of $L_{s+\delta}$ between $\omega$ and $H_\varphi(z_\delta)$. By triangle inequality,
\[\left\||\gamma^*| - \frac{|L_s| d}{|l_s|}\right\| \geq  - 2L \delta.\]
Thus we find that
\begin{align*}
|\Gamma| &\leq \left\| \frac{|L_{s+\delta}| d}{|l_s|} - |\gamma*|\right\|
\\&\leq ||L_s| - |L_{s+\delta}|| \frac{d}{|l_s|} + 2L \delta
\\&\leq 4L\delta.
\end{align*}
This shows that $|H_\varphi(z_\delta) - H_\varphi(z)| \leq 4L \delta$ and proves our claim.

Note: We will use the following consequence of this proof repeatedly in multiple other parts of the paper. Given a Jordan domain $\yy$ with a piecewise linear boundary and points $A_1,A_2, B \in \partial \yy$, suppose that the part of $\partial \yy$ between $A_1$ and $A_2$ which does not contain $B$ has length $\delta'$. Then if $\varphi_1, \varphi_2 : [0,1] \to \bar{\yy}$ are the two shortest curves in $\bar{\yy}$ from $B$ to $A_1$ and $A_2$ respectively, parametrized with constant speed, then $|\varphi_1(x) - \varphi_2(x)| \leq C\delta'$ for all $x \in [0,1]$. This claim follows from the above proof, notably the only difference is that we start from the same point $B$ instead of two points $B_s$ and $B_{s+\delta}$ but this case is even simpler.

\end{proof}

\subsection{Lipschitz-continuity in the time variable}

Our next aim is to look at a situation where instead of a single given boundary map $\varphi$, we are given a continuous sequence of boundary homeomorphisms $\varphi_t : \partial S \to \rr^2, t \in [0,1]$ (not necessarily to the same target domain). The aim is to show that if the dependence on $t$ is Lipschitz, meaning that
\begin{equation}\label{eq:tlip}
|\varphi_{t_1}(z) - \varphi_{t_2}(z)| \leq L |t_1-t_2| \qquad \text{ for } z \in \partial S,
\end{equation}
Then the same estimate holds (up to a uniform constant) for the extensions $H_{\varphi_t}$ and points $z \in \bar{S}$ as well. We expect this to be true in the general case, but for our purposes we will only need to prove such a result in a few simple cases which are easier to explain.
\begin{lemma}\label{lem:sameboundary} Suppose that $\Y \subset \C$ is a piecewise linear Jordan domain and $\varphi_t : \partial S \to \partial \Y$ are given boundary homeomorphisms so that \eqref{eq:tlip} is valid. Suppose also that the maps $\varphi_t(z)$ are equal on one half of $\partial S$, say $\varphi_t(z) =  \varphi_0(z)$ for all $z \in \partial S$ with $\Re z \leq 0$. Then $t \mapsto H_{\varphi_t}(z)$ is $CL$-Lipschitz for a uniform constant $C$ and all $z \in \bar{S}$.
\end{lemma}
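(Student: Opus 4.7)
The plan is to reduce the claim directly to the ``Note'' recorded at the end of the proof of Lemma~\ref{lem:shortestcurve}. Fix $z \in \bar{S}$ and write $z = l_s(x)$, where $l_s \colon [0,1] \to S$ is the constant-speed parametrization with $l_s(0)=a_s$ and $l_s(1)=b_s$. By definition of $H_{\varphi_t}$, we have $H_{\varphi_t}(z) = L_s^t(x)$, where $L_s^t \colon [0,1] \to \bar{\Y}$ is the constant-speed parametrization of the shortest curve in $\bar{\Y}$ from $A_s^t := \varphi_t(a_s)$ to $B_s^t := \varphi_t(b_s)$.

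The key observation is that, since $a_s$ lies on the half of $\partial S$ where $\Re \le 0$, the hypothesis forces $A_s^t = \varphi_0(a_s)$ independently of $t$; denote this common value by $A_s$. Then $L_s^{t_1}$ and $L_s^{t_2}$ are two shortest curves in $\bar{\Y}$ sharing the common starting point $A_s$ and ending at $B_s^{t_1}, B_s^{t_2} \in \partial \Y$, which is exactly the setting of the Note. It therefore suffices to show that the length $\delta'$ of the arc of $\partial \Y$ joining $B_s^{t_1}$ and $B_s^{t_2}$ that does \emph{not} contain $A_s$ satisfies $\delta' \le L|t_1 - t_2|$; the Note will then yield
\[
|H_{\varphi_{t_1}}(z) - H_{\varphi_{t_2}}(z)| = |L_s^{t_1}(x) - L_s^{t_2}(x)| \le C\delta' \le CL|t_1-t_2|.
\]

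To bound $\delta'$, I would track the curve $\beta \colon t \mapsto B_s^t$ on $\partial \Y$. By \eqref{eq:tlip}, $\beta$ is $L$-Lipschitz and hence rectifiable with total length at most $L|t_1 - t_2|$. The crucial point is that $\beta$ avoids $A_s$ entirely: if $B_s^t = A_s$ for some $t$, then $\varphi_t(b_s) = \varphi_0(a_s) = \varphi_t(a_s)$, contradicting injectivity of $\varphi_t$. Consequently $\beta$ stays inside the connected set $\partial \Y \setminus \{A_s\}$, which is homeomorphic to an open interval, and along this interval the arc distance between the endpoints $B_s^{t_1}$ and $B_s^{t_2}$ is dominated by the length of $\beta$. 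This gives $\delta' \le L|t_1-t_2|$, as required.

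The main technical point I anticipate is precisely this confinement of $\beta$ to a single component of $\partial \Y \setminus \{A_s\}$; it relies simultaneously on the injectivity of each $\varphi_t$ and on the identity $\varphi_t(a_s) = A_s$ provided by the left-half hypothesis. Without injectivity, $\beta$ could pass through $A_s$ and wind around $\partial \Y$, destroying the bound on $\delta'$; without the fixed-boundary hypothesis, $A_s$ itself would depend on $t$ and the Note could not be invoked. With both ingredients in hand the argument closes immediately.
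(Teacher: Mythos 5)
Your proposal is correct and follows essentially the same route as the paper: both exploit the hypothesis to fix the common left endpoint $A_s=\varphi_t(a_s)$ and then reduce the claim to the Note at the end of the proof of Lemma~\ref{lem:shortestcurve}, applied to the two constant-speed shortest curves from $A_s$ to $\varphi_{t_1}(b_s)$ and $\varphi_{t_2}(b_s)$. The only difference is in execution: the paper bounds the relevant boundary arc by a locality argument (choosing $t_2$ close to $t_1$ so that both endpoints lie on one segment of $\partial\Y$), whereas you bound it globally by observing that the $L$-Lipschitz endpoint path $t\mapsto\varphi_t(b_s)$ has length at most $L|t_1-t_2|$ and, by injectivity of each $\varphi_t$, stays in $\partial\Y\setminus\{A_s\}$ so that its image covers the arc not containing $A_s$ --- a slightly more explicit treatment of the same point.
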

\begin{proof}
Let $z \in \bar{S}$. We consider the horizontal segment $l$ passing through $z$ and its two endpoints $a$ and $b$. Fixing the point $t_1 \in (0,1)$, by continuity we choose $t_2 \in (0,1)$ close enough to $t_1$ so that $\varphi_{t_1}(b)$ and $\varphi_{t_2}(b)$ lie on the same segment on $\partial \Y$. By our assumptions also $\varphi_{t_1}(a) = \varphi_{t_2}(a)$. For $\varphi_{t_1}$, we let $L^{t_1}$ denote the shortest curve from $\varphi_{t_1}(a)$ to $\varphi_{t_1}(b)$ in $\bar{\Y}$. Similarly $L^{t_2}$ is the shortest curve from $\varphi_{t_1}(a)$ to $\varphi_{t_2}(b)$. Then $H_{\varphi_{t_1}}(z)$ lies on $L^{t_1}$ and $H_{\varphi_{t_2}}(z)$ lies on $L^{t_2}$ and the exact positioning of these points on these curves is again determined by the constant-speed parametrization on the horizontal segment $l$. But this situation is essentially exactly the same as in the second case of the proof of Lemma \ref{lem:shortestcurve} (see note at the end of that proof), and we may apply the same proof to show that
\[|H_{\varphi_{t_1}}(z) - H_{\varphi_{t_2}}(z)| \leq 4L|t_1-t_2|.\]
\end{proof}

We now show that given two Lipschitz boundary maps which are equal on one half of $\partial S$, one is able to construct a homotopy between such maps with comparable Lipschitz constant in both the space and time variable.

\begin{lemma}\label{lem:changeboundary}
Suppose that $\varphi_0, \varphi_1 : \partial S \to \rr^2$ are two embeddings of the square $\partial S$ into $\rr^2$. Let $\yy_0$ and $\yy_1$ be the Jordan domains bounded by the respective image curves $\varphi_0(\partial S)$ and $\varphi_1(\partial S)$. Suppose that $\varphi_0(z) = \varphi_1(z)$ for all $z \in \partial S$ with $\Re z \leq 0$, i.e. on the two leftmost sides of square $S$. Let us call the union of these leftmost sides $s_-$ and the union of the two remaining sides $s_+$. Suppose that the curves $\varphi_0(s_+)$ and $\varphi_1(s_+)$ do not intersect except for their endpoints. Suppose also that both embeddings $\varphi_0$ and $\varphi_1$ are Lipschitz-continuous with constant $L$. Then there exists a homotopy $\varphi_t : \partial S \to \rr^2$, $t \in (0,1)$ of embeddings of $\partial S$ between $\varphi_0$ and $\varphi_1$ such that the maps $H_{\varphi_t} : S \to \rr^2$ are also Lipschitz-continuous in $(z,t)$ with constant $C L$ for an uniform constant $C$. Moreover, $\varphi_t(z) = \varphi_0(z)$ for $z \in s_-$ and $\varphi_t(s_+)$ lies between the curves $\varphi_0(s_+)$ and $\varphi_1(s_+)$ for all $t$. Also, $\varphi_t$ may be chosen so that the curves $\varphi_t(s_+)$ do not intersect each other in $t$ except for the mutual endpoints.
\end{lemma}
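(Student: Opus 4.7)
The plan is to foliate the closed Jordan region $R$ bounded by $\varphi_0(s_+)\cup\varphi_1(s_+)$ by a family of non-crossing arcs from $A:=\varphi_0(i)$ to $B:=\varphi_0(-i)$ interpolating between the two boundary arcs, and to take $\varphi_t(s_+)$ to be the $t$-th arc of this foliation. Since $\varphi_0\equiv\varphi_1$ on $s_-$, the two arcs $\varphi_0(s_+)$ and $\varphi_1(s_+)$ share the endpoints $A,B$ and by hypothesis are otherwise disjoint, so $R$ is well-defined; moreover its diameter is $O(L)$ since each bounding arc has length at most $L\cdot|s_+|$.

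The heart of the argument is to produce a bi-Lipschitz homeomorphism $\Psi\colon[0,1]\times s_+\to\bar R$ with Lipschitz constant $CL$ such that $\Psi(0,\cdot)=\varphi_0|_{s_+}$ and $\Psi(1,\cdot)=\varphi_1|_{s_+}$. In the piecewise linear setting natural to this paper (cf.\ Lemma \ref{lem:piecewiselinear}), I would construct a triangulation of $\bar R$ whose $0$- and $1$-level boundary polygons are the two given arcs, match corresponding vertices at equal fractions of arc length, and insert enough interior vertices that every triangle is uniformly nondegenerate at the scale $L$. Then $\Psi$ is defined as the piecewise affine map carrying a standard triangulation of $[0,1]\times s_+$ to this one. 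Set
\[
\varphi_t(z):=\Psi(t,z)\text{ for }z\in s_+,\qquad \varphi_t(z):=\varphi_0(z)\text{ for }z\in s_-.
\]
The geometric conclusions of the lemma (each $\varphi_t$ is an embedding, $\varphi_t(s_+)\subset\bar R$, distinct curves $\varphi_t(s_+)$ are disjoint apart from the endpoints) then follow directly from the corresponding properties of $\Psi$, and $\varphi_t(z)$ is $CL$-Lipschitz in both $z$ and $t$.

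It remains to verify the joint Lipschitz bound for $H_{\varphi_t}$ in $(z,t)$. The $z$-direction for fixed $t$ is immediate from Lemma \ref{lem:shortestcurve} applied to the $CL$-Lipschitz map $\varphi_t$. The $t$-direction at fixed $z$ adapts the proof of Lemma \ref{lem:sameboundary}: although the target domain $\yy_t$ bounded by $\varphi_t(\partial S)$ now varies with $t$, the boundary moves only along $\varphi_t(s_+)$ and only by $CL|t_1-t_2|$. Hence the three-piece decomposition $\alpha$--$\gamma$--$\beta$ of the shortest curves $L^{t_1}$ and $L^{t_2}$ remains valid, and the very same length comparison gives $|H_{\varphi_{t_1}}(z)-H_{\varphi_{t_2}}(z)|\le CL|t_1-t_2|$.

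The main obstacle I expect is producing the parametrization $\Psi$ with Lipschitz constant genuinely of order $L$. The two arcs can be intricate, only constrained by being $L$-Lipschitz, and a careless triangulation can produce very thin triangles and blow up the Lipschitz constant. Getting uniform control requires both a careful matching between the two arcs—an equal arc-length pairing, which keeps corresponding points within $CL$ by the Lipschitz hypothesis—and a careful insertion of auxiliary interior vertices so that every triangle in the triangulation of $\bar R$ is uniformly nondegenerate; once this is achieved, the affine extension automatically inherits the Lipschitz constant $CL$ needed for the rest of the argument.
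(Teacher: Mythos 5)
Your proposal takes a genuinely different route from the paper, and it has two substantive gaps that make it fall short.

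First, the foliation. The paper does \emph{not} foliate the region $\hat{\yy}$ between $\gamma_0$ and $\gamma_1$ by a general family of non-crossing arcs. Instead it uses a very specific homotopy: $\gamma_t$ coincides with $\gamma_0$ from $A$ up to the point $P_t$ which is the fraction $2t$ of the way along $\gamma_0$, and then proceeds by the shortest curve from $P_t$ to $B$ in $\overline{\hat{\yy}}$ (and symmetrically on $[1/2,1]$ via $\gamma_{1/2}$). Even so, the resulting curves $\gamma_t$ may overlap at vertices of $\partial\hat{\yy}$, and the paper has to perturb them at the end of the proof to make them disjoint. Your bi-Lipschitz triangulation $\Psi:[0,1]\times s_+\to\bar{R}$ with constant $CL$, on the other hand, cannot exist in general: two $L$-Lipschitz arcs with the same endpoints can bound a region $R$ that is a thin corridor of width $\epsilon \ll L$, and any homeomorphism from the fixed rectangle $[0,1]\times s_+$ onto such an $R$ must compress the $t$-direction by a factor of order $\epsilon$, so its inverse cannot be $CL$-Lipschitz. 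A forward-only Lipschitz foliation may exist, but you never actually construct it: the matching segments $\gamma_0(z)\gamma_1(z)$ can exit $R$ or cross one another, and ``insert enough interior vertices so that every triangle is uniformly nondegenerate'' is precisely what has to be proved, not assumed. You flag this difficulty yourself, but you do not resolve it.

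Second, and more fundamentally, your argument for the $t$-Lipschitz estimate on $H_{\varphi_t}$ does not go through. You invoke Lemma~\ref{lem:sameboundary} and say ``the very same length comparison gives the result,'' but Lemma~\ref{lem:sameboundary} is stated and proved for boundary maps into a \emph{fixed} Jordan domain $\yy$. When the domain $\yy_t$ varies with $t$, comparing the shortest curve in $\yy_{t_1}$ with the shortest curve in $\yy_{t_2}$ is a genuinely harder problem, and it is the main content of the paper's proof of this lemma (Cases 1 through 3 with sub-cases, the auxiliary maps $\Phi$, and the auxiliary domains $\hat{\yy}$). That analysis works \emph{because} of the specific homotopy chosen: for $t_1,t_2$ close, $\yy_{t_2}$ differs from $\yy_{t_1}$ only by adding or removing the single small triangle $\Delta P_{t_1}P_{t_2}P^*$, which allows the shortest curves to be compared essentially segment by segment (Cases 1a--1d). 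With a generic foliation of $\hat{\yy}$, the boundaries of $\yy_{t_1}$ and $\yy_{t_2}$ differ along their entire length, not in a single small triangle, and no comparably simple geometric reduction is available. So the sentence ``the three-piece decomposition remains valid and the very same length comparison gives the estimate'' skips over what is really the crux of the lemma. Until you either prove a stability estimate for shortest curves under general small boundary perturbations (with constant $CL$), or switch to the paper's specific homotopy which localizes the perturbation, the proof is incomplete.
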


\begin{proof} Let $\gamma_0 = \varphi_0(s_+)$ and $\gamma_1 = \varphi_1(s_+)$. We first describe a homotopy $\gamma_t$ between these two curves, which will then be used to construct $\varphi_t$ by setting $\varphi_t(s_+) = \gamma_t$ and fixing a parametrization. On $s_-$ we naturally set $\varphi_t \equiv \varphi_0$. At first this homotopy will be constructed in a way such that the curves $\gamma_t$ may mutually overlap but we will modify them slightly to address this later.

The curve $\gamma_t$ is defined as follows. Let the mutual endpoints of $\gamma_0$ and $\gamma_1$ be $A$ and $B$ and the domain between these curves be denoted by $\hat{\yy}$. Let $\gamma_{1/2}$ be the shortest path from $A$ to $B$ within the closure of $\hat{\yy}$. We now need to only describe how to deform $\gamma_0$ to $\gamma_{1/2}$ as the case from $\gamma_{1/2}$ to $\gamma_1$ will be handled in the same way.

For $t \in [0,1/2]$, note that $2t$ varies from $0$ to $1$. We choose $\gamma_t$ as follows. First, travel along $\gamma_0$ starting from $A$ until we have travelled exactly portion $2t$ of $\gamma_0$. We have arrived at a point of $\gamma_0$ which we shall call $P_t$. For the remainder of the parametrization, we take the shortest curve from $P_t$ to $B$ within the closure of $\hat{\yy}$. This defines $\gamma_t$ up to parametrization, and the exact parametrization of $\gamma_t$ will be defined now.

First we note that we may assume that the map $\varphi_0$ maps $s_+$ to $\gamma_0$ with constant speed. If this was not the case, we may deform the parametrization of $\varphi_0$ into a constant speed one while keeping the same Lipschitz constant simply by making a linear homotopy to the identity map in the parameter space. Then Lemma \ref{lem:sameboundary} shows that the same Lipschitz estimate works in the interior as well, in which case we are reduced to the case of constant speed parametrization of $\gamma_0$.

Now, we define $\varphi_t$ for $t \in [0,1/2]$ by setting $\varphi_t(s) = \varphi_0(s)$ for those $s$ which lie within the portion $2t$ of $s_+$ starting from the preimage of $A$. The constant speed parametrization guarantees that then $\varphi_t(2t) = P_t$, meaning that we have travelled the same portion on the domain side on $s_+$ and the image side on $\gamma_0$. For the remaining portion $1 - 2t$ of $s_+$, we also parametrize $\gamma_t$ by constant speed to the respective image curve which is the shortest curve from $P_{2t}$ to $B$ (although the constant may differ from the previous one).

Clearly $\varphi_t(s)$ has uniform Lipschitz-continuity in $s$, so we investigate the estimates in $t$. Fix $s$ and let $0 < t_1 < t_2 < 1/2$. We abuse notation and identify $s_+$ with the interval $[0,1]$ for the moment. If $s \leq 2t_1$, then $\varphi_{t_1}(s) = \varphi_{t_2}(s)$ and there is nothing to consider. The main case is when $s \geq 2t_2$, which we now consider.

This case essentially reduces to the proof of Lemma \ref{lem:shortestcurve} again. We assume that $t_1,t_2$ are close enough so that $P_{t_1}$ and $P_{t_2}$ are on the same segment of the piecewise linear curve $\gamma_0$. Now we are dealing with two curves which are the shortest curves in the closure of $\hat{\yy}$ from $B$ to $P_{t_1}$ and $P_{t_2}$, let us label these $\beta_1$ and $\beta_2$ respectively and suppose that they are parametrized with constant speed from $[2t_1,1]$ and $[2t_2,1]$. The distance between $P_{t_1}$ and $P_{t_2}$ is equal to $|\varphi_0(2t_1) - \varphi_0(2t_2)| \leq L|t_1-t_2|$. We wish to show that $|\beta_1(s) - \beta_2(s)| \leq CL|t_1-t_2|$. The only difference now compared to Lemma \ref{lem:sameboundary} is that there is a slight difference in parametrization. Indeed, the shortest curves $\beta_1$ and $\beta_2$ which we consider here have a different domain of definition. However, we may let $\beta_2^*$ be a curve which has the same image curve as $\beta_2$ and is parametrized with constant speed over the interval $[2t_1,1]$ instead. In this case the same arguments from Lemma \ref{lem:sameboundary} show that
\begin{equation}\label{eq:beta1}|\beta_1(s) - \beta_2^*(s)| \leq CL|t_1-t_2|.\end{equation}
Now, let $s' \in [2t_2,1]$ be such that $\beta_2^*(s) = \beta_2(s')$. Comparing lengths, we must have due to constant speed parametrization that
\[\frac{s'-2t_2}{1-2t_2} = \frac{s-2t_1}{1-2t_1} \Rightarrow s' = 2t_2 + \frac{1-2t_2}{1-2t_1} (s-2t_1).\]
Moreover, since $\beta_2$ is shorter than the part of the curve $\gamma_0$ from $P_{t_2}$ to $B$, due to the Lipschitz estimate and constant speed parametrization we must have that $|\beta_2|/(1-2t_2) \leq L$. We may now estimate that
\begin{align*}
|\beta_2(s) - \beta_2^*(s)| &=  |\beta_2(s) - \beta_2(s')|
\\&= \frac{|\beta_2|}{1-2t_2} |s - s'|
\\&\leq L |s - s'|
\\&= L \left|s - 2t_2 - \frac{1-2t_2}{1-2t_1} (s-2t_1)\right|
\\&= L \left| \frac{(s-2t_2)(1-2t_1) - (s-2t_1)(1-2t_2}{1-2t_1}\right|
\\&= L \left| \frac{s-2t_1}{1-2t_1} (2t_2 - 2t_1) + 2t_1 - 2t_2 \right|
\\&\leq 4L|t_1-t_2|.
\end{align*}
Combining \eqref{eq:beta1} with the above now gives that $|\beta_1(s) - \beta_2(s)| \leq (C+4)L|t_1-t_2|$ as required.

If we were in the last remaining case $2t_1 < s < 2t_2$, then simply by triangle inequality
\begin{align*}
|\beta_1(s) - \beta_2(s)| &\leq |\beta_1(s) - P_{t_1}| + |P_{t_1} - \beta_2(s)|
\\&= |\beta_1(s) - \beta_1(2t_1)| + |\varphi_0(2t_1) - \varphi_0(s)|
\\&\leq L|s-2t_1| + L|2t_1 - s|
\\&\leq 4L|t_1-t_2|.
\end{align*}
This shows that the boundary maps $\varphi_t$ satisfy the required Lipschitz estimates in $t$. Now we must still show that the same holds for the shortest curve extensions $H_{\varphi_t}$. The proof of this fact will still follow the same types of arguments as the proof of Lemma \ref{lem:shortestcurve}, but we must elaborate more as in this case we are dealing with two shortest curves within two different domains. However, we may again deduce the global Lipschitz-continuity from a local result and hence suppose that the considered time interval is small so that the geometry of the boundary is not too different between the two domains.

Let thus $z \in S$ and $t_1,t_2 \in (0,1/2)$. Let $l$ be the horizontal segment in $S$ which passes through $z$ and let $a$ and $b$ be its endpoints from left to right. Then $\varphi_{t_1}(a) = \varphi_{t_2}(a)$ as the mappings are the same on the left side. Let $\yy_{t_1}$ be the Jordan domain bounded by $\varphi_{t_1}(\partial S)$ and $L^{t_1}$ be the shortest curve within the closure of $\yy_{t_1}$ between $\varphi_{t_1}(a)$ and $\varphi_{t_1}(b)$. We define $\yy_{t_2}$ and $L^{t_2}$ analogously. We also let $p_{t_1} = \varphi_{t_1}(b)$ and $p_{t_2} = \varphi_{t_2}(b)$.


\begin{figure}
\phantom{a}
\vskip 150pt
{\begin{picture}(0.0,0.0) 
     \put(-220.2,0.2){\includegraphics[width=1.4\textwidth]{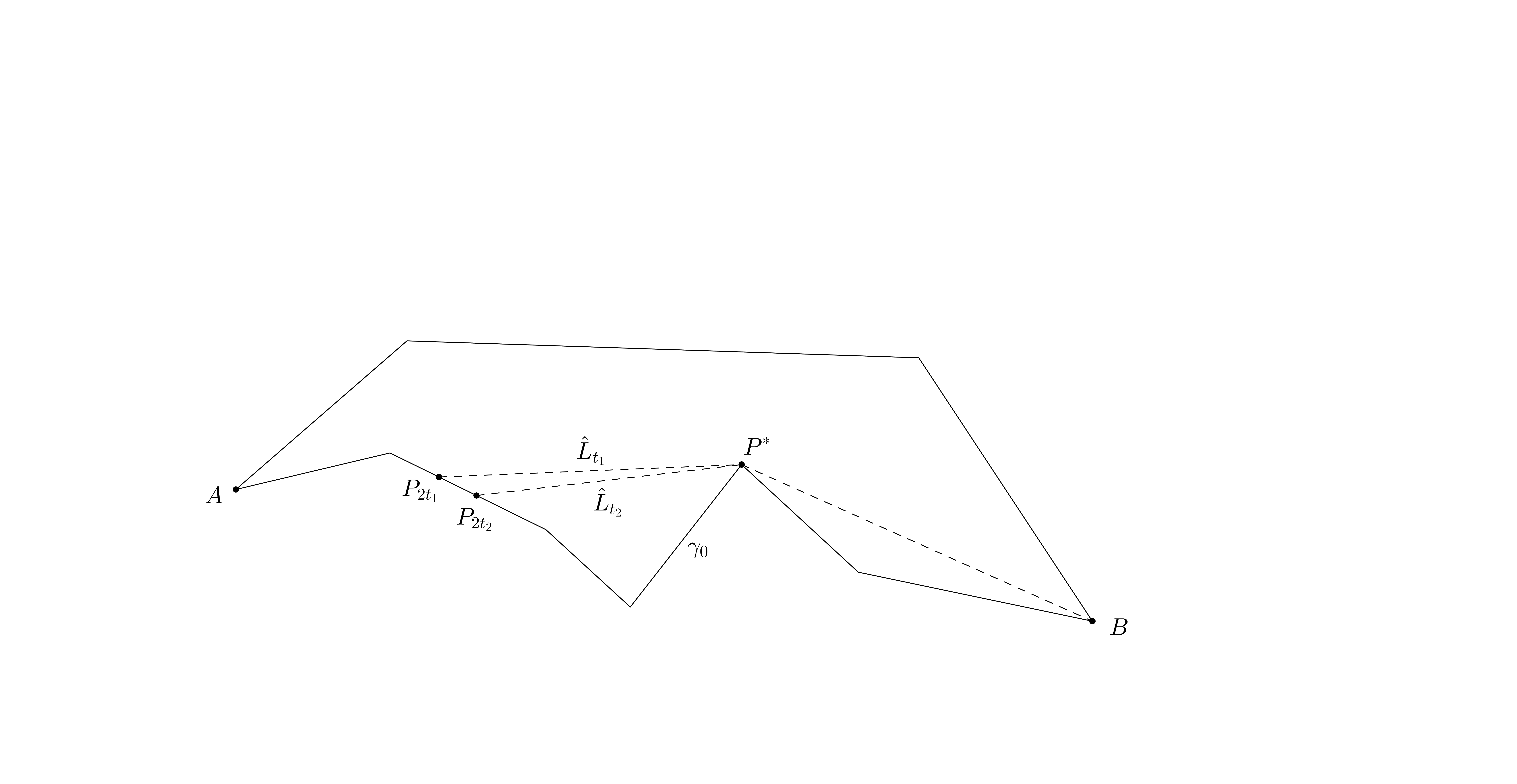}}
  \end{picture}
  }
\vskip -50pt	
\caption{For $t_1$ and $t_2$ close $P_{t_1}$ and $P_{t_2}$ are close and $\gamma_{t_1}$ and $\gamma_{t_2}$ are almost the same. 
The only difference is that segment $\hat{L}_{t_1}$ on $\gamma_{t_1}$ is replaced by two segments $P_{t_1}P_{t_2}$ and $\hat{L}_{t_2}$.}\label{fig:navic}
\end{figure}

We now use locality and consider $t_1$ fixed while choosing $t_2$ close enough to $t_1$ so that $P_{t_1}$ and $P_{t_2}$ lie on the same segment of the piecewise linear curve $\gamma_0$ (see Fig. \ref{fig:navic}). Note that the curve $\gamma_{t_1} = \varphi_{t_1}(s_+)$ is also piecewise linear, it consists of a part of $\gamma_0$ from $A$ to $P_{t_1}$ and a shortest curve from $P_{t_1}$ to $B$ which we shall call $\beta_{t_1}$. Let $\hat{L}_{t_1}$ be the line segment of $\beta_{t_1}$ which starts from $P_{t_1}$. We define $\beta_{t_2}$ and $\hat{L}_{t_2}$ similarly. Then if $t_2$ is chosen sufficiently close to $t_1$, the endpoint of $\hat{L}_{t_2}$ must lie on $\hat{L}_{t_1}$, let's call this endpoint $P^*$. This means that the only difference between the curves $\gamma_{t_1}$ and $\gamma_{t_2}$ is the following. If $t_2 > t_1$, the curve $\gamma_{t_1}$ travels in a single line segment from $P_{t_1}$ to $P^*$ while the curve $\gamma_{t_2}$ travels between the same points in two line segments $P_{t_1}P_{t_2}$ and $P_{t_2}P^*$. From $A$ to $P_{t_1}$ and $P^*$ to $B$ the curves are the same. If $t_2 < t_1$ the same happens but with $t_1$ and $t_2$ interchanged.

This simplification of the difference in geometry between $\gamma_{t_1}$ and $\gamma_{t_2}$ helps us with the next part, which is to consider the relation between the curves $L^{t_1}$ and $L^{t_2}$. We split the argument into a few cases.
\\\\
\textbf{Case 1.} If $p_{t_1}=\varphi_{t_1}(b)$ does not lie on the segment of $\gamma_{t_1}$ between $P_{t_1}$ and $P^*$.\\\\
In this case, $p_{t_1}$ lies on the common boundary of $\yy_{t_1}$ and $\yy_{t_2}$. We now define another map on the horizontal segment $l$ by considering the shortest curve from $\varphi_{t_1}(a)$ to $p_{t_1}$, but this time within the closure of $\yy_{t_2}$. Let this map be called $\Phi : l \to \bar{\yy_{t_2}}$ and parametrize it in constant speed also. Then the result of Lemma \ref{lem:shortestcurve} shows that $|H_{\varphi_{t_2}}(z) - \Phi(z)|$ may be estimated from above in terms of a constant times the length of the boundary of $\yy_{t_2}$ between $p_{t_1}$ and $p_{t_2}$. But the boundary estimates from before show that this length may be estimated from above by $CL|t_1-t_2|$. 

Hence due to the triangle inequality 
\[|H_{\varphi_{t_2}}(z) - H_{\varphi_{t_1}}(z)| \leq |H_{\varphi_{t_2}}(z) - \Phi(z)|+ |\Phi(z) - H_{\varphi_{t_1}}(z)|\]
it remains to consider the quantity $|\Phi(z) - H_{\varphi_{t_1}}(z)|$. This quantity depends on the curves $L^{t_1}$ and $\Phi(l)$. These curves are both shortest curves from $\varphi_{t_1}(a)$ to $p_{t_1}$. However, one is within the domain $\yy_{t_1}$ and the other is within the domain $\yy_{t_2}$. Thus we are to investigate how this change of domain affects the behaviour of the shortest curve.
\begin{figure}[H]\label{fig:short1}
\includegraphics[scale=0.4]{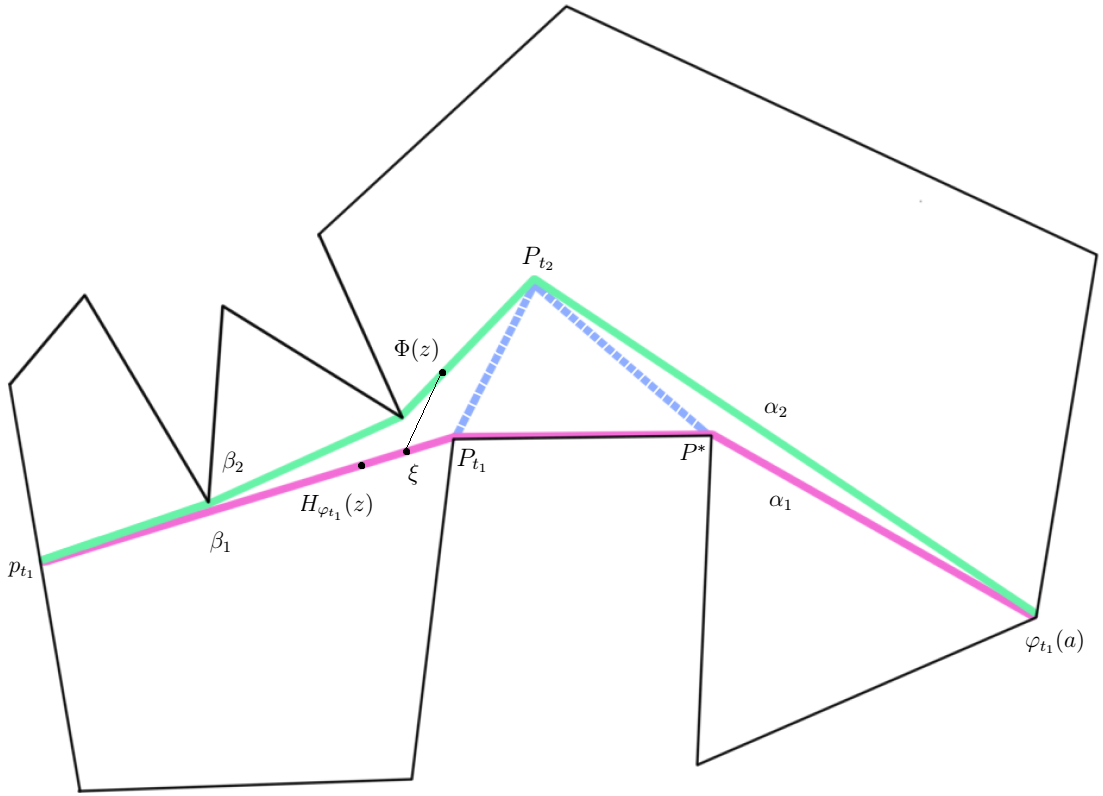}
\caption{Case 1: Two shortest curves between $\varphi_{t_1}(a)$ to $p_{t_1}$ in different domains. The boundary of $\mathbb{Y}_{t_1}$ is denoted by the black piecewise linear curve. The domain $\mathbb{Y}_{t_2}$ is created from $\mathbb{Y}_{t_1}$ by adding a triangle $\Delta P_{t_1}P_{t_2}P^*$. }\label{sestka}
\end{figure}
\noindent \emph{Case 1a.} Suppose that the curve $L^{t_1}$ does not touch the segment $P_{t_1}P^*$.

Since $L^{t_1}$ is the shortest curve between $\varphi_{t_1}(a)$ and $\varphi_{t_1}(b)$ in $\yy_{t_1}$, if $\yy_{t_2} \subset \yy_{t_1}$ then $\Phi(l)$ (the shortest curve between the same points in $\yy_{t_2}$) must be at least as long as $L^{t_1}$. But since $L^{t_1}$ does not intersect $P_{t_1}P^*$ we must have $L^{t_1} \subset \yy_{t_2}$ and thus $L^{t_1} = \Phi(l)$. If $\yy_{t_2}$ is not contained in $\yy_{t_1}$, which is when $P_{t_2}$ lies outside of $\yy_{t_1}$, then it still must hold that $L^{t_1} = \Phi(l)$ because the shortest curve $\Phi(l)$ cannot pass through the interior the triangle $\Delta P^* P_{t_1} P_{t_2}$ as it can only enter and exit through the segment $P^* P_{t_1}$. Thus there is nothing to prove in this case.\\\\
\noindent\emph{Case 1b.} Suppose that $P_{t_1} \in L^{t_1}$ and $P_{t_2} \in \Phi(l)$.

Let the part of $L^{t_1}$ between $\varphi_{t_1}(a)$ and $P_{t_1}$ be called $\alpha_1$ and the part from $P_{t_1}$ to $p_{t_1}$ be called $\beta_1$. Similarly, the part of $\Phi(l)$ from $\varphi_{t_1}(a)$ to $P_{t_2}$ is $\alpha_2$ and from $P_{t_2}$ to $p_{t_1}$ is $\beta_2$. Let $|P_{t_1} - P_{t_2}| = \delta$.

Let us say that a curve in $\bar{\yy_1}$ does not cross the segment $P_{t_1}P_{t_2}$ if that curve is a uniform limit of curves within $\yy_1 \setminus P_{t_1}P_{t_2}$. Note that none of the curves $\alpha_1, \alpha_2, \beta_1$ and $\beta_3$ pass through the interior of the triangle $\Delta P_{t_1}P_{t_2}P^*$ and also do not cross the segment $P_{t_1}P_{t_2}$. Hence within the class of curves in $\bar{\yy_1}$ which do not cross the segment $P_{t_1}P_{t_2}$, these curves are also the shortest curves between their respective endpoints.

We suppose that $\Phi(z)$ is on $\beta_2$. The case where it is on $\alpha_2$ is proven similarly. We define a point $\xi \in \beta_1$ as the intersection point of $\beta_1$ with the line passing through $\Phi(z)$ and parallel to $P_{t_1} P_{t_2}$ (see Fig \ref{sestka}). Due to the fact that $\beta_1$ and $\beta_2$ are shortest curves in $\bar{\yy_1}$ which do not cross the segment $P_{t_1}P_{t_2}$, the segment from $\Phi(z)$ to $\xi$ lies entirely between these two curves and has length smaller than $\delta$ - this can be argued similarly as the convexity part in Case 2 of Lemma \ref{lem:shortestcurve}. Let $\beta_2^*$ be the part of $\beta_2$ from $p_{t_1}$ to $\Phi(z)$ and $\beta_1^*$ be the part of $\beta_1$ from $p_{t_1}$ to $\xi$. Then a simple shortest curve estimate shows that
\begin{equation}\label{eq:betastar}||\beta_2^*| - |\beta_1^*|| \leq |\Phi(z)-\xi| \leq \delta.
\end{equation}
Similarly we may find that
\begin{equation}
\begin{split}
||\beta_2|-|\beta_1|| &\leq \delta\\
||\alpha_1|- |\alpha_2|| &\leq \delta.
\end{split}\label{eq:beta_alpha}
\end{equation}

Now consider the length of the part of $H_{\varphi_1}(l)$ between $p_{t_1}$ and $H_{\varphi_{t_1}}(z)$, call this length $\tau$. Due to constant speed parametrization, if the distance from $a$ to $z$ is $x$, we find that $\tau = (|\alpha_1| + |\beta_1|)x/|l|$. But since $x \leq |l|$ and the estimates \eqref{eq:beta_alpha}, we find that
\[||\tau - |\beta_2^*|| = \left||\tau - \frac{(|\alpha_2| + |\beta_2|)x}{|l|}\right| \leq 2\delta.\]
However, \eqref{eq:betastar} then implies that $|\tau - |\beta_1^*|| \leq 3\delta$. This further gives that $|\xi-H_{\varphi_{t_1}}(z)| \leq 3\delta$ and finally $|\Phi(z) - H_{\varphi_{t_1}}(z)| \leq 4\delta$. Since $\delta \leq CL|t_1-t_2|$ this is enough.\\\\
\noindent \emph{Case 1c.} Suppose $P_{t_2} \in \Phi(l)$, $P_{t_1} \notin L^{t_1}$ but either $L^{t_1}$ passes through $P_{t_1}P_{t_2}$ or through $P^* P_{t_2}$.

If $L^{t_1}$ passes through $P_{t_1}P_{t_2}$, let the intersection point of $P_{t_1}P_{t_2}$ and $L^{t_1}$ be $Q$. This case can be handled the same way as Case 1b, with $Q$ taking the role of $P_{t_1}$. The case where $L^{t_1}$ passes through $P^* P_{t_2}$ can be handled symmetrically.
\\\\
\noindent \emph{Case 1d.} Suppose that $P_{t_2} \notin \Phi(l)$.

This case appears either when the point $P_{t_2}$ is outside the domain $\yy_{t_1}$ or when $L^{t_1}$ only passes through the triangle $\Delta P_{t_1} P_{t_2} P^*$ at one of the vertices $P_{t_1}$ or $P^*$ (See Figure \ref{fig:short5}). In all of these cases the curves $L^{t_1}$ and $\Phi(l)$ are the same, and there is nothing to prove. This handles all the possible options and finishes the proof of Case 1.
\begin{figure}[H]\label{fig:short5}
\includegraphics[scale=0.4]{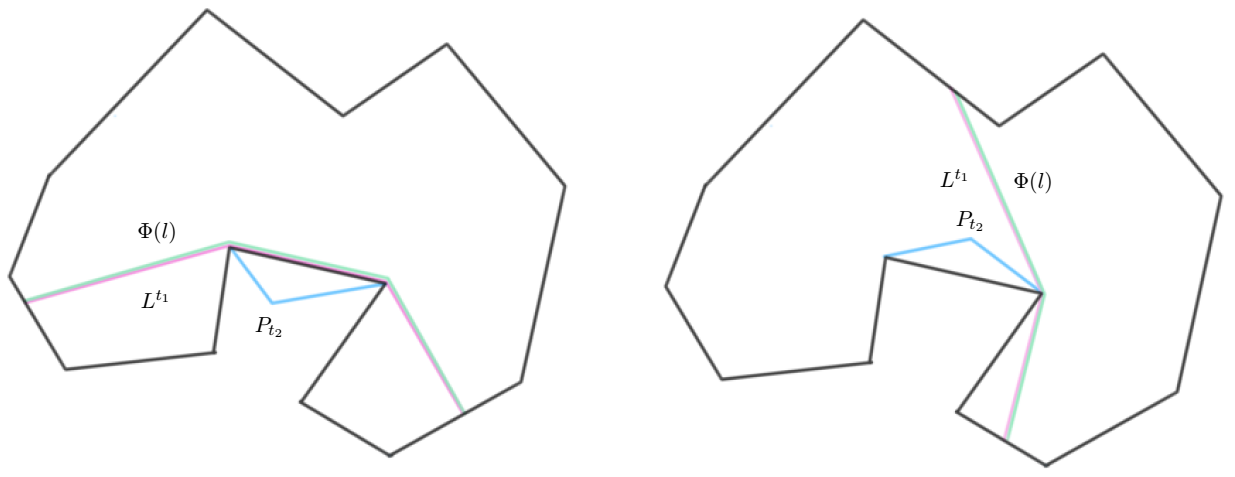}
\caption{Case 1d: Reduces to pictured possibilities in which the curves $L^{t_1}$ and $\Phi(l)$ are the same.}
\end{figure}
\textbf{Case 2.} If $p_{t_2}$ does not lie on the segment of $\gamma_{t_2}$ between $P_{t_2}$ and $P^*$. This case may be treated with the same arguments as Case 1, with $t_1$ and $t_2$ interchanged.\\\\
\textbf{Case 3.} We suppose that $p_{t_1}$ lies on the segment $P_{t_1}P^*$ and $p_{t_2}$ on the segment $P_{t_2}P^*$.\\\\
By symmetry, suppose that $t_1 < t_2$. We now consider the triangle $T = \Delta P_{t_1}P_{t_2}P^*$, but must split into cases depending on if this triangle is inside or outside of $\yy_{t_1}$.

\emph{Case 3a.} If $T$ is inside of $\yy_{t_1}$. The shortest curve $L^{t_1}$ must pass through $T$ before it reaches its endpoint at $p_{t_1}$. Moreover, the part of $L^{t_1}$ inside the closure of $T$ must be a single segment since $T$ is convex. Now, the point $p_{t_2}$ splits the union of the segments $P_{t_1}P_{t_2}$ and $P_{t_2} P^*$ into two parts. Let $\hat{\gamma}$ be the part which does not intersect $L^{t_1}$.
\begin{figure}[H]\label{fig:short3}
\includegraphics[scale=0.35]{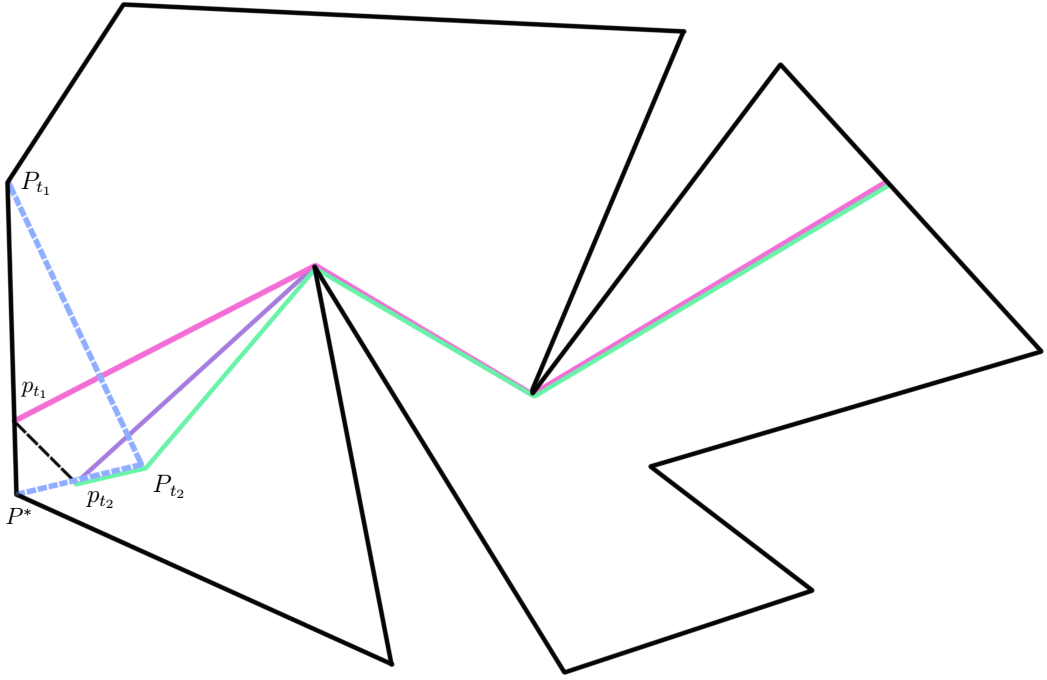}
\caption{Case 3a: Shortest curves to $p_{t_1}$ and $p_{t_2}$ when $T$ is inside of $\yy_{t_1}$. In this case, $\hat{\yy}$ is obtained by taking $\partial \yy_1$ and replacing $p_{t_1}P^*$ with $p_{t_1}p_{t_2}$ and $p_{t_2}P^*$. Again $\mathbb{Y}_{t_2}$ is created from $\mathbb{Y}_{t_1}$ by adding a triangle $\Delta P_{t_1}P_{t_2}P^*$.}
\end{figure}

The idea now is to create a new domain $\hat{\yy}$. We take the Jordan curve $\partial \yy_{t_1}$, add the union of $p_{t_1}p_{t_2}$ and $\hat{\gamma}$ to it, and remove the segment of $\partial \yy_{t_1}$ which has the same endpoints as this union does (either we remove $p_{t_1}P^*$ or $p_{t_1}P_{t_1}$). This Jordan curve now defines $\hat{\yy}$. An equivalent definition is to cut off from $\yy_{t_1}$ a region bounded by $p_{t_1}p_{t_2}$ and $\hat{\gamma}$. The key point is that by this construction the curve $L^{t_1}$ still lies in the closure of $\hat{\yy}$. In fact, the curve $L^{t_1}$ is still the shortest curve from $\varphi_{t_1}(a)$ to $p_{t_1}$ within the new domain $\hat{\yy}$. This is due to the fact that the shortest curve from $\varphi_{t_1}(a)$ to $p_{t_1}$ does not change if we remove a region of the domain which does not intersect this shortest curve to begin with.

Let now $\Phi : l \to \bar{\hat{\yy}}$ denote the shortest curve from $\varphi_{t_1}(a)$ to $p_{t_2}$ in the closure of $\hat{\yy}$, parametrized with constant speed. Now we split our estimates via the triangle inequality
\[|H_{\varphi_{t_2}}(z) - H_{\varphi_{t_1}}(z)| \leq |H_{\varphi_{t_2}}(z) - \Phi(z)|+ |\Phi(z) - H_{\varphi_{t_1}}(z)|.\]
The quantity $|\Phi(z) - H_{\varphi_{t_1}}(z)|$ may now be estimated via the arguments of Lemma \ref{lem:shortestcurve}, since both $\Phi$ and $H_{\varphi_{t_1}}$ map the horizontal segment $l$ to a shortest curve within $\hat{\yy}$, and the distance between their endpoints $p_{t_2}$ and $p_{t_1}$ is estimated from above by $CL|t_1-t_2|$.

The quantity $|H_{\varphi_{t_2}}(z) - \Phi(z)|$ is dealt with the same arguments as Case 1, since $\Phi$ and $H_{\varphi_{t_2}}$ map the horizontal segment $l$ to shortest curves from $\varphi_{t_1}(a)$ to $p_{t_2}$, however in different domains $\hat{\yy}$ and $\yy_{t_2}$. The difference between these domains is again small.
\\\\
\emph{Case 3b.} If $T$ is outside of $\yy_{t_1}$. This case is handled much the same as the previous one, only now we create $\hat{\yy}$ from $\yy_{t_2}$ by adding $p_{t_1}p_{t_2}$ and the part of $P_{t_1}P^*$ which does not intersect $L^{t_2}$. We also remove either $p_{t_2}P^*$ or the two segments of $\partial \yy_2$ which join $p_{t_2}$ with $P_{t_1}$ to create the Jordan curve that bounds $\hat{\yy}$. Now the situation is dealt with the same arguments as the previous case.
\\\\
\emph{Finishing the proof.} It remains to make a slight modification to the curves $\gamma_t$ to make them mutually nonintersecting (here we exclude intersection at the endpoints $A$ and $B$) and to make sure that this does not interfere with the claimed estimates. Note that if two of these curves do intersect, they must do so at a vertex $P$ of $\partial \hat{\yy}$, which was the Jordan domain bounded by $\gamma_0$ and $\gamma_1$. At any such vertex $P$ we attach to it a small segment $PV_P$ facing the interior of $\hat{\yy}$ and bisecting the angle of $\partial \hat{\yy}$ at $P$.

Now for each such segment we consider all the curves $\gamma_t$ which pass through $PV_P$ and let the intersection point of $\gamma_t$ with this segment be $P_t$. Thus for those parameters $t$ the map $t \to P_t$ defines either an increasing or decreasing parametrization of $P V_P$, which is not strictly monotone as some interval of parameters is sent to the point $P$. However, we may make an arbitrarily small modification to this parametrization to make it strictly monotone, replacing each point $P_t$ with another point $P_t^*$ on $PV_P$.

This gives us a way to modify each of the piecewise linear curves $\gamma_t$ by another curve $\gamma_t^*$ which, for each segment $PV_P$ that intersects $\gamma_t$, passes through the point $P_t^*$ instead of $P_t$. As this modification may be done in an arbitrarily small way we may assume that the Lipschitz estimates we obtained before for $\varphi_t$ and for $H_{\varphi_t}$ also hold after the modification up to a multiplicative constant arbitrarily close to $1$. It is also not difficult to see that the curves $\gamma_t^*$ are now mutually noinintersecting, for further details see Section \ref{sec:injective} where a similar construction is explained in more depth.
\end{proof}

\section{The 3D extension}\label{sec:3d}

\begin{proof}[Proof of Theorem \ref{thm:mainsum}]
We now describe the process of extending a given homeomorphic boundary map $\varphi : \rr^2 \to \rr^2$ locally as a homeomorphism of the upper half space to itself. Recalling that $S_0 = [0,1]^2$ is the unit square in the plane, our aim is to define a continuous injective extension $h : [0,1]^3 \to \rr^3_+$ which agrees with $\varphi$ on $[0,1]^2 \times \{0\}$ (this is identified with $S_0$). However, first we will define a monotone extension $h : [0,1]^3 \to \rr^3_+$ using the two-dimensional shortest curve extensions defined before, and in the last section we will explain how this extension can be modified to be homeomorphic.

The idea is to decompose the domain space $[0,1]^3$ dyadically into cubes $U_{k,j}$. Recall the original standard dyadic decomposition of $S_0$ into dyadic squares $\tilde{Q}_{k,j}$. We define $U_{k,j} = \tilde{Q}_{k,j} \times [2^{-k}, 2^{-(k-1)}]$. Thus $U_{k,j}$ is a cube of side length $2^{-k}$ and the union of all such cubes decompose the domain space $[0,1]^3$. The idea is to map each cube to a 'cylindrical' region $V_{k,j}$.

Recall that the curve $\Gamma_{k,j}$, as defined in Lemma \ref{lem:piecewiselinear}, denotes a piecewise linear replacement of the image curve $\varphi(\partial\tilde{Q}_{k,j})$. We define the top face of $V_{k,j}$ as the horizontal region bounded by the curve $\Gamma_{k,j} \times \{2^{-(k-1)}\}$. On the next dyadic level, let $\hat{\Gamma}_{k,j}^{(m)}$ for $m = 1,2,3,4$ denote the four piecewise linear curves of the form $\Gamma_{k',j'}$ for some $k',j'$ which are obtained from the images of the four dyadic children of $\tilde{Q}_{k,j}$. Moreover, let $\hat{\Gamma}_{k,j}$ denote the piecewise linear Jordan curve which corresponds to the outer boundary of the union of all four $\hat{\Gamma}_{k,j}^{(m)}$. Then the bottom face of $V_{k,j}$ will be defined as the horizontal region bounded by the curve $\hat{\Gamma}_{k,j} \times \{2^{-k}\}$. See Figure \ref{fig:grid1}.

We aim to define the extension $h$ so that it keeps horizontal planes fixed, meaning that $[0,1]^2 \times \{t\}$ is mapped to $\rr^2 \times \{t\}$ for each $t > 0$. In terms of the sets $U_{k,j}$ and $V_{k,j}$, the map $h$ will map each horizontal section of $U_{k,j}$ to the horizontal section of $V_{k,j}$ of the same height. The horizontal sections of $V_{k,j}$ will still need to be defined, however, and to do this we will need to construct an appropriate homotopy between the curves $\Gamma_{k,j}$ and $\hat{\Gamma}_{k,j}$. Before we begin the construction, we state our main goal in terms of estimates as the following.
\\\\
\textbf{Goal:} The map $h : U_{k,j} \to V_{k,j}$ will be a Lipschitz mapping. The Lipschitz constant of the map should be estimated from above by a uniform constant times the quantity $(|\Gamma_{k,j}| + \sum_{m=1}^4|\hat{\Gamma}_{k,j}^{(m)}|)2^k$, or possibly this quantity added together with the same quantity over all of the neighbours of $\Gamma_{k,j}$.\\\\

\begin{figure}[H]
\includegraphics[scale=0.4]{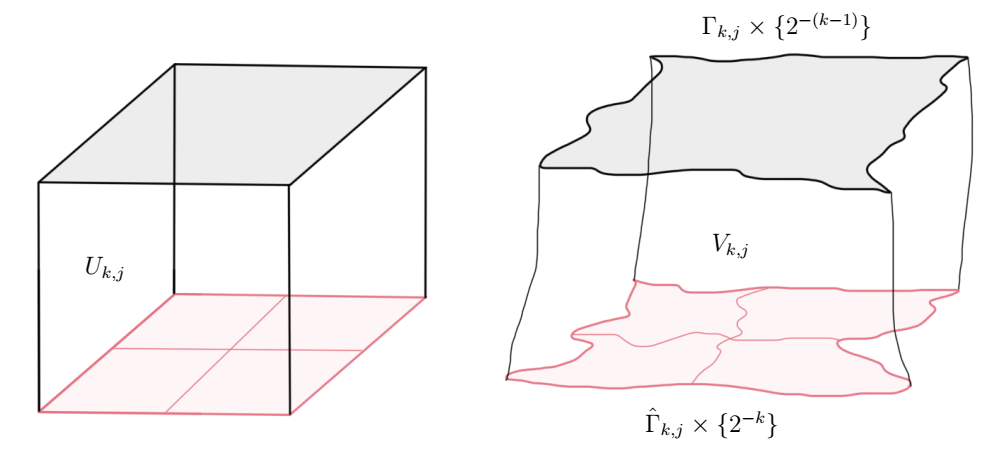}
\caption{The cube $U_{k,j}$ and its image set $V_{k,j}$ defined as a region spanned by the curve $\Gamma_{k,j} \times \{2^{-(k-1)}\}$ and its corresponding curve $\hat{\Gamma}_{k,j} \times \{2^{-k}\}$ on the next level.}\label{fig:grid1}
\end{figure}

The process of deducing the statement of Theorem 1.2 from these results is as follows. After this section we have defined the monotone extension $h$ on each dyadic cube $U_{k,j}$ so that the goal estimate above holds, and this extension is further modified into an injective extension $h$ in Section \ref{sec:injective} with the same estimates. In Theorem 1.2, the boundary map $\varphi$ is defined not on the plane but on the unit sphere, but this topological difference poses no additional difficulty to applying the same extension technique. Indeed, we may decompose the unit sphere dyadically and then apply the same extension process locally with the up direction replaced with the inward normal direction. The goal estimate above can then be used to estimate the Sobolev norm of the extension $h$ inside each dyadic region $U_{k,j}$ by estimating the differential $|Dh|$ above by the Lipschitz constant. Combined with the goal estimate this gives
\[\int_{U_{k,j}} |Dh(z)|^q \, dz \, \leq 2^{k(q-3)} (|\Gamma_{k,j}| + \sum_{m=1}^4|\hat{\Gamma}_{k,j}^{(m)}|)^q.\]
Summing up over all $U_{k,j}$ and recalling that the curves $\Gamma_{k,j}$ were defined as piecewise approximations of $\varphi(\partial \tilde{Q}_{k,j})$ with comparable length gives the statement of Theorem \ref{thm:mainsum}. \\\\

We now proceed to the construction of $h$.

\emph{Step 1.} We define $h$ on the sides of the top and bottom faces of $U_{k,j}$. We wish to map the top sides $\partial \tilde{Q}_{k,j} \times \{2^{-(k-1)}\}$ to the Jordan curve $\Gamma_{j,k}$ and the bottom sides $\partial \tilde{Q}_{k,j} \times \{2^{-k}\}$ to $\hat{\Gamma}_{j,k}$. Note that here and what follows we abuse $\partial$ to mean the 1D boundary of these sets rather than taking the topological boundary of the sets in 3D space.

\emph{Step 2.} We define $h$ on the top and bottom faces of $U_{k,j}$. To simplify notation, we set $\mathcal{U}_t = \tilde{Q}_{k,j} \times \{t\}$. Furthermore, let $top := 2^{-(j-1)}$ and $bot := 2^{-k}$ so that $\mathcal{U}_{top}$ is the top face and $\mathcal{U}_{bot}$ is the bottom one. Similarly we set $\varphi_t = h\vert_{\partial \mathcal{U}_{t}}$ and $h_{t} = h\vert_{\mathcal{U}_t}$, although only $\varphi_{top}$ and $\varphi_{bot}$ have been defined so far. On $\mathcal{U}_{top}$, we simply define $h_{top}$ as the shortest curve extension of $\varphi_{top}$. Note that this choice also forces us to define $h_{bot}$ on $\mathcal{U}_{bot}$ in a specific way to avoid discontinuity. Indeed, the bottom side $\mathcal{U}_{bot}$ is in fact the union of four top sides of dyadic cubes of the form $U_{k+1,j'}$ on the next level. Thus on $\mathcal{U}_{bot}$ the map $h_{bot}$ is defined separately in each of the four squares as the shortest curve extension of the corresponding boundary values.

\emph{Step 3.} Let $mid := 2^{-k} + 2^{-k-1}$ be the middle point of $[2^{-k},2^{-(k-1)}]$ so that $\mathcal{U}_{mid}$ is the middle level of the cube $U_{k,j}$. On the sides of $\mathcal{U}_{mid}$ and for every parameter $t \in [mid,bot]$, we define $\varphi_t$ equal to $\varphi_{bot}$. On $\mathcal{U}_{mid}$ we define $h_{mid}$ as the shortest curve extension of $\varphi_{mid}$. Hence for $t \in [mid,bot]$, the mapping $h_t$ has the same boundary values on each level $\mathcal{U}_t$ but is a different map on the faces $\mathcal{U}_{mid}$ and $\mathcal{U}_{bot}$. We return to this part in a later step and describe how to define $h_t$ for $t \in (mid,bot)$ to give the correct isotopy between the maps $h_{mid}$ and $h_{bot}$.

\emph{Step 4.} For $t \in [top,mid]$, we will define $h_t$ as the shortest curve extension of $\varphi_t$. However, we have not yet defined $\varphi_t$ for these parameters. Note that the image of $\varphi_{top}$ is $\Gamma_{k,j}$ and the image of $\varphi_{mid}$ is $\hat{\Gamma}_{j,k}$. Thus we must define a homotopy $\varphi_t$ between these two curves which is what we will do now.

\begin{figure}[H]
\includegraphics[scale=0.4]{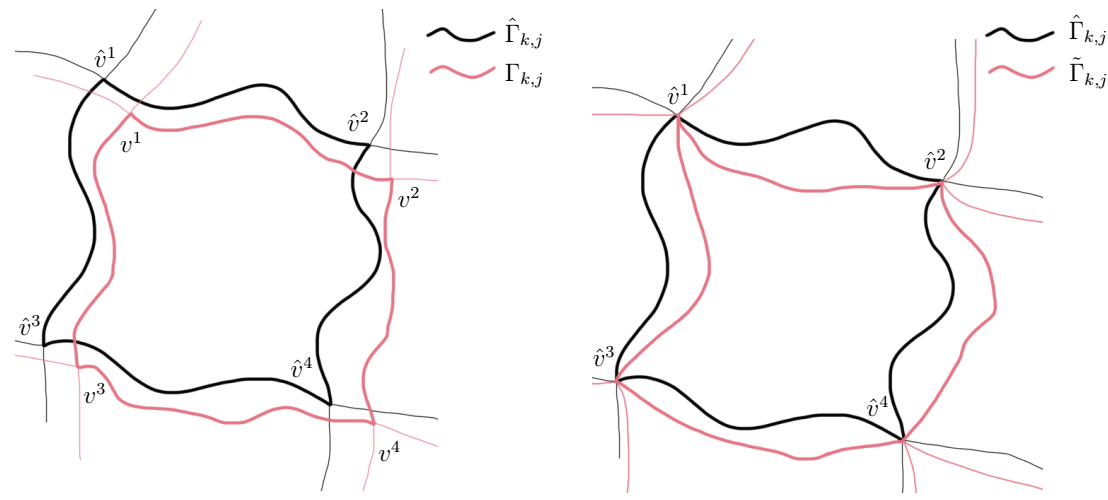}
\caption{On the left, the curve $\Gamma_{k,j}$ and its corresponding curve $\hat{\Gamma}_{k,j}$ on the next level. On the right, $\Gamma_{k,j}$ has been modified to $\tilde{\Gamma}_{k,j}$.}\label{fig:grid2}
\end{figure}

The left part of Figure \ref{fig:grid2} depicts the curves $\Gamma_{k,j}$ and $\hat{\Gamma}_{k,j}$. As in the figure, let us label the vertices of these curves by $v^j$ and $\hat{v}^j$, $j = 1,2,3,4$ in corresponding order. We pick one pair of such vertices, say $v_1$ and $\hat{v}_1$. The vertex $v_1$ is the intersection point of two sides of the curve $\Gamma_{k,j}$ as well as two other sides of curves in the same grid, for a total of four. We let the midpoint of these sides be $m_j$, $j = 1,2,3,4$, see Figure \ref{fig:grid4}. We similarly define four points $\hat{m}_j$ as the midpoints of the segments of the grid of curves $\tilde{\Gamma}_{k,j}$ which meet at $\hat{v}_1$, numbered correspondingly to the points $m_j$. We now connect each of the points $m_j$ with $\hat{m}_j$ through a piecewise linear curve $\alpha_j$ which does not intersect either of the grids and has length comparable to the infimal length of such curves. Travelling along the curves $\alpha_j$ and the two grids, we let $K_1$ denote the area bounded by the points $\hat{m}_1, m_1, v_1, m_2, \hat{m}_2$ and $\hat{v_2}$. Similarly we define $K_2$ as the area bounded by $m_4,\hat{m}_4,\hat{v}_1,\hat{m}_3,m_3$ and $v_1$.

\begin{figure}[H]
\includegraphics[scale=0.5]{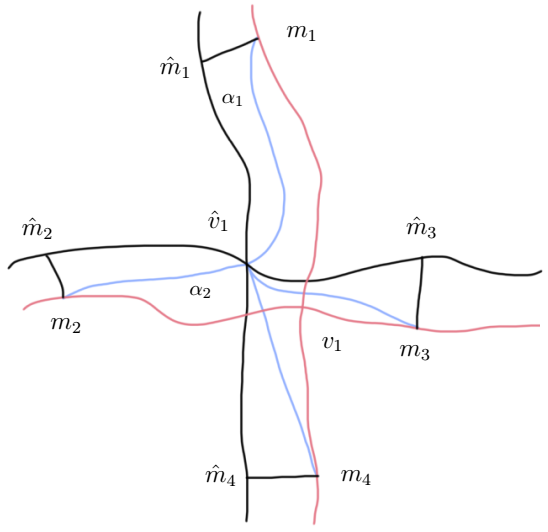}
\caption{The plus-shaped region which is the union of the sets $K_1$ and $K_2$.}\label{fig:grid4}
\end{figure}

Our aim now is to deform the cross formed by the curves with endpoints at $m_1,\ldots,m_4$ and intersecting at $v_1$, to a cross with the same endpoints but middle point at $\hat{v}_1$ instead. This deformation should be done as a homotopy in $t$ with controlled Lipschitz estimates just as we have done before. Moreover, we wish to introduce no new intersection points during this homotopy and keep the deformation entirely within $K_1$ and $K_2$.

We first connect the points $m_1$ and $\hat{v}_1$ with a piecewise linear Jordan curve $\alpha_1$ which does not intersect any of the other considered curves and has distance comparable to the sum of the length of the curves from $m_1$ to $\hat{m}_1$ and $\hat{m}_1$ to $\hat{v}_1$. This can be done for example by choosing a curve sufficiently close to those two curves but not intersecting them or itself. Similarly we define a curve $\alpha_2$ from $m_2$ to $\hat{v}_1$, see again Figure \ref{fig:grid4}.

Let $\psi_0$ be the union of the curves from $m_1$ to $v_1$ and from $v_1$ to $m_2$, parametrized on $[0,1]$. Similarly let $\psi_1$ be the union of $\alpha_1$ and $\alpha_2$. Now by Lemma \ref{lem:changeboundary} there must be a homotopy $\psi_t$ between $\psi_0$ and $\psi_1$ with the correct Lipschitz estimates in $t$ and such that $\psi_t$ lies between $\psi_0$ and $\psi_1$ (hence within $K_1$).

Moreover, by reparametrizing such a homotopy at the endpoints if needed (Lemma \ref{lem:sameboundary}) we may suppose that $\psi_0(1/2) = v_1$ and $\psi_1(1/2) = \hat{v}_1$. We then let $\Psi$ denote the curve given by $t \mapsto \psi_t(1/2)$. Due to the fact that the curves $\psi_t$ coming from Lemma \ref{lem:changeboundary} do not intersect we see that the curve $\Psi$ is piecewise linear and nonintersecting. It also connects $v_1$ to $\hat{v}_1$ within $K_1$.

The deformation from $\psi_0$ to $\psi_1$ gives one part of the sought homotopy between the two crosses. Let $\beta_1$ denote the curve from $m_3$ to $v_1$ and $\beta_2$ the curve from $m_4$ to $v_1$. We denote by $\psi_0^*$ the union of $\beta_1$ and $\beta_2$, parametrized again on $[0,1]$. It remains to show that we can construct a homotopy $\psi_t^*$ so that $\psi_t^*(0) = m_3$, $\psi_t^*(1/2) = \psi_t(1/2)$, $\psi_t^*(1) = m_4$, and so that the curve $\psi_t^*$ has no additional intersection points with $\psi_t$ nor $\partial K_2$.

We first construct a homotopy from $\psi_0^*$ to a curve $\tilde{\psi}_1$ which is obtained by traveling the curve $\beta_1$, then the curve $\Psi$, then back along the curve $\Psi$ in reverse, and finally along $\beta_2$. Supposing that $\beta_1, \beta_2$ and $\Psi$ are initially parametrized on $[0,1]$, we let

\begin{equation*}
\tilde{\psi}_t(s) = \begin{cases}
  \beta_1((1/2-t/4)^{-1}s) & 0 \leq s \leq 1/2-t/4\\
  \Psi(4s-2+t) & 1/2 - t/4 < s \leq 1/2\\
  \Psi(t-4s+2) & 1/2 < s \leq 1/2 + t/4 \\
  \beta_2((1/2-t/4)^{-1}(s-1/2-t/4)) & 1/2 + t/4 < s \leq 1
  \end{cases} \\
\end{equation*}
This curve travels first along $\beta_1$, then $\Psi([0,t])$, then $\Psi([0,t])$ backwards and then along $\beta_2$. Thus it gives the desired homotopy and it is easy to verify that such a homotopy satisfies the required Lipschitz estimates in $s$ and $t$.

This homotopy otherwise would suit our purposes but it obviously intersects itself so we cannot use it as the definition for $\psi_t^*$. Instead, we will define $\psi_t^*$ as the following modification of $\tilde{\psi}_t$. Essentially as we are travelling the curve $\Psi([0,t])$ twice in $\tilde{\psi}_t$, we wish to instead first travel a curve very close to $\Psi([0,t])$ and with the same endpoint $\Psi(t)$, and then travel back along another curve very close to $\Psi([0,t])$ but on the other side of the first curve so that we are never intersecting ourselves. Thus we are, in a sense, opening up the curve $\Psi$ into two curves. See Figure \ref{fig:open1}.

To define such a process, for each of the vertices $P \neq \hat{v}_1$ on the piecewise linear curve $\Psi$, we associate a very small segment $S_P$ with center at $P$ so that locally $P$ divides $S_P$ into two segments which lie on separate sides of $\Psi$. The curve $\Psi$ divides each segment $S_P$ into two segments $S_P^+$ and $S_P^-$. We make the choice of labelling in such a way that the segments $S_P^+$ all lie on the same side of $\Psi$. For each $t$ we then define a curve $\Psi_t^+$ as follows.

For each vertex $P$ of $\Psi$ let $t_P$ be such that $\Psi(t_P) = P$. Then we count all such vertices $P$ for which $t_P \leq t$. For any such vertex, we pick a point $Q_{t,P}$ on $S_P^+$ so that $|Q_{t,P} - P|/|S_P^+| = (t-t_P)/(1-t_P)$. This means that at $t = t_P$, the point $Q_{t,P}$ is exactly $P$ and at $t = 1$ we arrive at the other endpoint of $S_P^+$. We then connect the points $Q_{t,P}$ for all $P$ such that $t_P \leq t$, together with the point $\Psi(t)$, to form a piecewise linear Jordan curve $\Psi_t^+$ which lies locally on one side of $\Psi$. This curve is then parametrized on $[0,t]$. A similar process on the other side defines a curve $\Psi_t^-$ so that the curves $\Psi_t^+$ and $\Psi_t^-$ do not intersect. Moreover, one may verify that both of the maps $s \to \Psi_t^\pm(s)$ satisfy the correct Lipschitz estimates in $(s,t)$ for sufficiently small choices of segments $S_P$ (which may be chosen arbitrarily small).

We then modify the curve $\tilde{\psi}_t$ as follows. Instead of travelling the whole curve $\beta_1$ first, we travel along $\beta_1$ until we get to its last segment which ends on $v_1$, and instead of going along the segment to $v_1$ we go along a segment to $\Psi_t^+(0)$ instead. We then travel along $\Psi_t^+$ and travel backwards along $\Psi_t^-$ to $\Psi_t^-(0)$. We then travel back along $\beta_2$, but must first replace the first segment of $\beta_2$ which starts at $v_1$ by a segment which starts at $\Psi_t^-(0)$ instead. This defines the curve $\psi_t^*$, and the four parts which make up this curve are parametrized on the same four intervals in the definition of $\tilde{\psi}_t$ above.

\begin{figure}[H]
\includegraphics[scale=0.4]{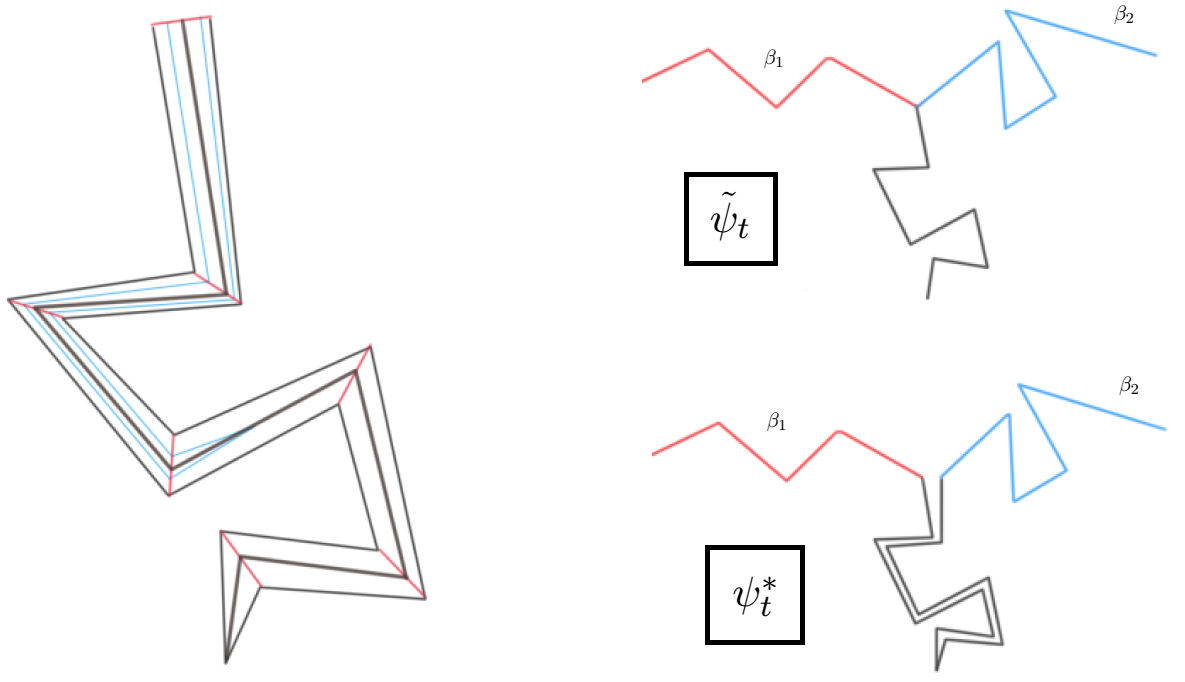}
\caption{Opening up the curve $\Psi$ to create a homotopy of Jordan curves.}\label{fig:open1}
\end{figure}

The curves $\psi_t^*$ are now non-intersecting and give the desired homotopy from $\psi_0$ to a curve $\psi_1^*$ which passes through $m_1,\hat{v}_1$ and $m_2$, lies entirely within $K_2$ and does not intersect itself or the other relevant curves. Thus in total we have defined a way to deform the cross with endpoints $m_1,\ldots,m_4$ and center $v_1$ to a cross with the same endpoints and center $\hat{v}_1$.

After doing this process for every vertex $v_j$ and every curve $\Gamma_{k,j}$ on level $k$, we have replaced the curve $\Gamma_{k,j}$ with another curve $\tilde{\Gamma}_{k,j}$ with the same vertices as $\hat{\Gamma}_{k,j}$ but not intersecting it, see Figure \ref{fig:grid2}. The homotopy between $\tilde{\Gamma}_{k,j}$ and $\hat{\Gamma}_{k,j}$ is now easy to construct. Between each pair of neighbouring vertices, say $\hat{v}_1$ and $\hat{v_2}$, we deform the part of $\tilde{\Gamma}_{k,j}$ into $\hat{\Gamma}_{k,j}$ via the method explained in Lemma \ref{lem:changeboundary}. After deforming each four parts in succession we have deformed $\tilde{\Gamma}_{k,j}$ into $\hat{\Gamma}_{k,j}$.

We provide a few more details regarding parametrization and estimates. We may divide the interval $[top,mid]$ into two equal parts, on one of which we deform $\Gamma_{k,j}$ into $\tilde{\Gamma}_{k,j}$ and on the other $\tilde{\Gamma}_{k,j}$ into $\hat{\Gamma}_{k,j}$. The first part may be further divided into four parts where we move each of the vertices $v^j$ to $\hat{v^k}$, and the second one depending on which part of $\tilde{\Gamma}_{k,j}$ we are deforming.

In the first part, the length of the relevant curves is always controlled from above by $|\Gamma_{k,j}| + |\hat{\Gamma}_{k,j}|$, plus the same quantity over the neighbours of $\Gamma_{k,j}$. As the initial curves are parametrized with constant speed we know by Lemma \ref{lem:changeboundary} that the Lipschitz-constant of the shortest curve extension $h$ in the $(z,t)$-variables is thus controlled by $2^k(|\Gamma_{k,j}| + |\hat{\Gamma}_{k,j}|)$ added with this quantity over the neighbours. In the second part, we are again using Lemma \ref{lem:changeboundary} and therefore the Lipschitz-constant is estimated from above by $2^k(|\Gamma_{k,j}| + |\hat{\Gamma}_{k,j}|)$.
\\\\
\emph{Step 5.} For $t \in [mid,bot]$, the situation is as follows. The maps $h_{mid}$ and $h_{bot}$ have already been defined. We interpret these maps as planar maps, identifying the horizontal sections $\mathcal{U}_t$ of the cube $U_{k,j}$ on the domain side with the same square domain which we call $\mathcal{U}$. Both maps $h_{mid}$ and $h_{bot}$ are hence interpreted to be defined on $\mathcal{U}$ and as they have the same boundary map $\varphi_{mid} = \varphi_{bot}$, we may interpret them to map $\mathcal{U}$ into the same target domain $\mathcal{V}$ bounded by the piecewise linear Jordan curve $\varphi_{mid}(\partial \mathcal{U})$. The difference between these two maps is that $h_{mid}$ is defined by the shortest curve extension of $\varphi_{mid}$ and $h_{bot}$ is defined as the shortest curve extension of its boundary values in each of the four child squares of $\mathcal{U}$.

Let us denote by $\mathcal{C}$ the cross formed by the two segments between opposing midpoints of the sides of $\mathcal{U}$. Hence the way $h_{mid}$ maps $\mathcal{C}$ is determined by the shortest curve extension and we denote the image cross by $T_{mid} = h_{mid}(\mathcal{C})$. The way $h_{bot}$ maps $\mathcal{C}$ is predetermined by the piecewise linear approximations of the original boundary map defined in Section \ref{sec:standa}. We denote $T_{bot} = h_{bot}(\mathcal{C})$.

A key point to note is the following. Let $\mathcal{U}'$ denote one of the four children of $\mathcal{U}$. Then we claim that $h_{mid}$ restricted to $\mathcal{U}'$ is actually the shortest curve extension of its boundary value on $\partial \mathcal{U}'$. Let $\ell$ denote one of the horizontal line segments inside $\mathcal{U}'$ (the meaning of 'horizontal' here is as it was used in the definition of the shortest curve extension), with $a$ and $b$ being its endpoints. Then $\ell$ is part of a horizontal segment of $\mathcal{U}$ and is mapped to a curve under $h_{mid}$ which is the shortest such curve between its endpoints. This must mean also that the curve is the shortest curve from $h_{mid}(a)$ to $h_{mid}(b)$ inside $\bar{\mathcal{U}'}$. Moreover, since $h_{mid}$ maps each horizontal segment in $\mathcal{U}$ to its target curve with constant speed, $h_{mid}$ must also have constant speed on $\ell$. This cements the fact that $h_{mid}$ on $\mathcal{U}'$ is the shortest curve extension of its boundary values.

However, the above argument has the following minor defect. In Section \ref{sec:2d}, the shortest curve extension was defined for a boundary map from a square to a piecewise linear Jordan domain. But the map $h_{mid}$ might not map the two line segments making up $\mathcal{C}$ to true Jordan curves as the shortest curve extension may fail to be injective and thus the image cross $T_{mid}$ may touch the boundary in $\bar{\mathcal{V}}$. Nevertheless, these curves are still piecewise linear and are given by a uniform limit of Jordan curves. There is no issue defining the notion of shortest curves and shortest curve extensions to areas bounded by such degenerate Jordan curves as well, and the estimates we have established before in results such as Lemma \ref{lem:sameboundary} and Lemma \ref{lem:changeboundary} extend naturally to this setting as well. This can be seen either by verifying that the proofs go through in the degenerate case as well or use a limiting argument via approximation by actual Jordan curves.

From now the strategy to define a homotopy $h_t$ for $t \in [mid,bot]$ is as follows. For each such $t$, the map $h_t$ on $\partial \mathcal{U}$ will have the same boundary values $\varphi_{mid}$. Moreover, we will define a homotopy of crosses $T_{t}$ between the two crosses $T_{mid}$ and $T_{bot}$. Once such a homotopy has been defined and parametrized as a map $\Phi_t: \mathcal{C} \to T_t$, for each child $\mathcal{U}'$ of $\mathcal{U}$ we define $h_t$ on as the shortest curve extension of its boundary values on $\partial \mathcal{U}'$. Thus $h_t$ will be equal to $\varphi_{mid}$ on $\partial \mathcal{U}$ and to $\Phi_t$ on $\mathcal{C}$.

To construct the homotopy between the two crosses, we would like to apply the same argument from Step 4 which was used to create a homotopy between the crosses depicted in Figure \ref{fig:grid4}. However, in the argument from Step 4 it was essential that the two crosses only had two intersection points (on the curves between $v_1, m_1$ and $v_1, m_2$). In our case, the crosses $T_{mid}$ and $T_{bot}$ may have arbitrarily many intersection points. To address this issue, we define another cross $T_{fix}$ which satisfies this property respective to both the crosses $T_{mid}$ and $T_{bot}$, and then simply deform first $T_{mid}$ to $T_{fix}$ and then to $T_{bot}$. Due to Lemma \ref{lem:sameboundary}, the exact nature of the parametrization $\Phi_t$ does not play a role here and we may assume for example that on each of the four arms of $\mathcal{C}$ the parametrization always has constant speed.

Before defining $T_{fix}$, we make a small modification to $T_{mid}$ in order to replace it with a cross $T_{mid*}$  which does not intersect the boundary except at the four endpoints. Since the cross $T_{mid}$ consists of piecewise linear curves, this modification can be done by moving each of its vertices that touch the boundary (except for the four endpoints) by an arbitrarily small amount towards the interior of $\mathcal{V}$ so that the resulting cross does not intersect itself nor $\partial \mathcal{V}$. This modification provides a homotopy from $T_{mid}$ to $T_{mid*}$ which we may, for example, dedicate the first quarter of the interval $[mid,bot]$ towards in $t$. The fact that this modification to the cross may be done in an arbitrarily small way guarantees that the Lipschitz estimates (in $t$) both on $\mathcal{C}$ and for the shortest curve extensions to the four regions of $\mathcal{V}$ can be controlled by above with a constant of our choice.

It now remains to define $T_{fix}$. Since neither of the crosses $T_{mid*}$ and $T_{bot}$ touch the boundary $\partial \mathcal{V}$ except at their common four endpoints, we may choose $T_{fix}$ for example as follows. We pick a point $P$ in $\mathcal{V}$ close enough to one of the image points of the corners of $\mathcal{U}$ under $\varphi_{mid}$ so that $P$ belongs to $h_{mid*}(\mathcal{U}') \cap h_{bot}(\mathcal{U}')$ for one of the children $\mathcal{U}'$ of $\mathcal{U}$. Then we connect $P$ to the four endpoints of $T_{mid*}$ via piecewise linear curves to form the cross $T_{fix}$. These curves, if chosen to run sufficiently close along the boundary $\partial \mathcal{V}$, may be assumed to satisfy the necessary properties of not intersecting themselves or each other. Moreover, they can be chosen so that two of them intersect $T_{mid*}$ and $T_{bot}$ exactly once and two of them do not intersect these crosses (apart from the endpoints). See Figure \ref{fig:cross1}. This means that the crosses $T_{fix}$ and $T_{mid*}$ are in the same configuration as the crosses in Step 4, and the same goes for $T_{fix}$ and $T_{bot}$. Hence we may repeat the argument to find a homotopy between these crosses, and extend the boundary values defined by this via the shortest curve extension to the whole of $\mathcal{U}$. For each $t$, we lift the copy of $\mathcal{U}$ and the map $h_t$ to the appropriate horizontal section at height $t$ in $U_{k,j}$ and $V_{k,j}$ in order to fully define our extension there.

\begin{figure}[H]
\includegraphics[scale=0.5]{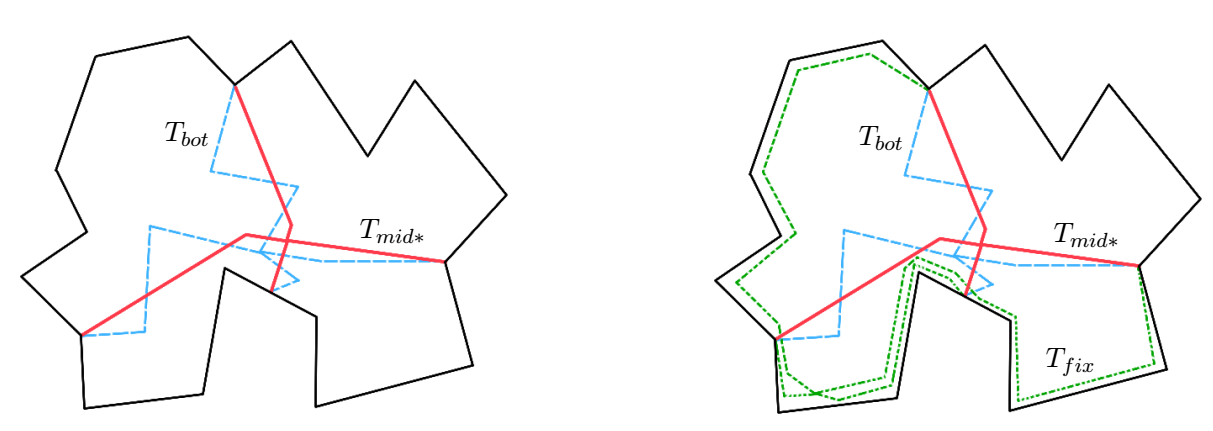}
\caption{Constructing an intermediate cross $T_{fix}$. The original crosses $T_{mid*}$ and $T_{bot}$ are denoted in red and blue color and they intersect a lot. Thus we construct a new intermediate cross $T_{fix}$ denoted in green which does not intersect $T_{mid*}$ and $T_{bot}$ too much.}\label{fig:cross1}
\end{figure}

We have thus defined the extension $h$ as a monotone map on each set $U_{k,j}$ to the image set $V_{k,j}$. We now return to our original goal of controlling the Lipschitz constant of $h$ in $U_{k,j}$ outlined in the beginning of the section. In Step 4, the Lipschitz constant of the boundary value isotopy $\varphi_t$ is controlled by above (in both the space and $t$ variable) by the lengths of the corresponding boundary curves and possibly the lengths of the neighbouring curves. Lemma \ref{lem:changeboundary} then shows that this implies the correct Lipschitz estimates for $h$ in the region where $t \in [top,mid]$. In the region $t \in [mid,bot]$, the map $h$ is defined piecewise as the shortest curve extension yet again, so to obtain the correct Lipschitz estimates one needs only estimate the length of the boundary curves on the image side. These consist of the original boundary curve $\partial \mathcal{V}$ and the lengths of the crosses $T_{mid}$, $T_{fix}$ and $T_{bot}$. The first two can be bounded from above by a constant times the length of $\partial \mathcal{V}$ (which is the length of $\hat{\Gamma}_{k,j}$, while the last one is bounded by the lengths of the image curves of the children $\hat{\Gamma}_{k,j}^{(m)}$. Thus we get the desired estimate that yields a bound on the $W^{1,q}$-norm of $h$ in terms of the quantity on the left hand side of \eqref{eq:discretehomeoextension}.

\end{proof}

\section{Making it all injective}\label{sec:injective}

Let $\varphi : \partial S \to \partial \yy$ be a homeomorphic boundary map to a Jordan domain $\yy$ with piecewise linear boundary. We now describe how to tackle the issue that the shortest curve extension $H_\varphi$ is not injective but rather a monotone map. The main issue is that the images of two horizontal segments $l_{s_1}$ and $l_{s_2}$ of $S$ may intersect each other or intersect the boundary of the image domain $\partial \yy$. However, the saving grace is that these images are shortest curves between their respective endpoints and thus do not cross, allowing us to make a minor modification to the curves so that they do not intersect each other or touch the boundary and therefore create a homeomorphic extension $H_{\varphi}^*$ of $\varphi$. This modification is not too difficult for a single map and was done already in \cite{HP}. However, in our case more details are needed as we need to make this modification consistent in a way that if $\varphi_t$ is a continuous family of boundary maps, not necessarily to the same image domain, then the modified extensions $H_{\varphi_t}^*$ need to be continuous in $t$ and the modification must be done in a way to preserve the Lipschitz estimates in terms of $\varphi_t$.

We consider here the situation where the boundary map $\varphi$ is also piecewise linear. In all of the cases we consider this is true since $\varphi$ is always defined piecewise as a constant speed map. When $\varphi$ and $\partial \yy$ are piecewise linear, it is not difficult to check that then also the shortest curve extension $H_\varphi$ becomes a piecewise linear map on $\bar{S}$.

The aim is to show that the modification from the shortest curve extension $H_\varphi$ to its homeomorphic variant $H_{\varphi}^*$ may be done in an arbitrarily small way in the following sense. On each horizontal segment $l_s$, the map $H_\varphi$ maps $l_s$ to a shortest curve $L_s$ with constant speed. The map $H_\varphi^*$ instead maps $l_s$ to another piecewise linear curve $L_s^*$, also with constant speed, and so that $L_s^*$ may be obtained from $L_s$ by shifting each vertex of $L_s$ by a small distance. We will show that such distances can be chosen to be arbitrarily small, controlled by a single constant per map, which means that the modified map $H_{\varphi_t}^*$ will also be arbitrarily close to $H_\varphi$ which lets us obtain the same Lipschitz-estimates for it.

The idea behind modifying the curves $L_s$ to the curves $L_s^*$ is quite simple. At each vertex of $\partial \yy$ where $L_s$ passes through, we move that vertex of $L_s$ a little bit further away from the boundary. For curves $L_{s'}$ with $s' > s$, this movement should be a little bit larger for vertices on $\partial \yy$ on the image of the part of $\partial S$ below $l_s$ and a little smaller for vertices on $\partial \yy$ on the image of the part of $\partial S$ above $l_s$. See Figure \ref{fig:modify1}. However, in order to make this compatible with a homotopy of boundary maps $\varphi_t$ we must define this process very precisely in order not to have discontinuities in $t$. This is what we now do.

We define a number $D$ as the minimal length between two sides of $\partial \yy$ which are not neighbours. Next, for any point $P \in \partial \yy$ we define the \emph{inner normal} of $P$, denoted $\ell_P$, as the ray which starts from the point $P$, points towards the interior of $\yy$ near $P$, and forms equal angles with $\partial \yy$ i.e. is an angle bisector for the angle of $\partial \yy$ formed at $P$.

For every vertex $P \in \partial \yy$, we pick a positive number $\epsilon_P < 1$ whose role will become apparent later in making the modification process continuous in $t$. We then define the point $V_P$ as the point on $\ell_P$ which is of distance $\epsilon_P D/3$ away from $P$. By the definition of $D$, the point $V_P$ must be at a distance of at least $2D/3$ away from any other side of $\partial \yy$ than the two $P$ lies on. This means that apart from the point $P$, the segment $PV_P$ cannot intersect $\partial \yy$ nor can it intersect any other such segment $QV_Q$ for another vertex $Q$ of $\partial \yy$.

Note that two of the shortest curves $L_s$ may only intersect at points on $\partial \yy$. Since the point $V_P$ is inside $\yy$, for each $P$ there must be a unique parameter $s_P$ for which $L_{s_P}$ passes through $V_P$. We also define $\hat{s}_P$ as the parameter for which $P$ is one of the endpoints of $L_{\hat{s}_P}$. Thus the curves $L_s$ which intersect the segment $PV_P$ are exactly those for which $s \in [s_P,\hat{s}_P]$. It can also be possible that $s_P = \hat{s}_P$, in which case the segment $PV_P$ belongs fully to the curve $L_{s_P}$. This is also the only case in which a curve $L_s$ intersects $PV_P$ more than once. In this case we will not modify the curve $L_{\hat{s}_P}$ which is equivalent with setting $\epsilon_P = 0$.

Suppose that $s_P > \hat{s}_P$. For each $s \in [\hat{s}_P,s_P]$ there is a unique point $X_s$ on $PS_P$ which belongs to $L_s$. Let $f_P : [\hat{s}_P,s_P] \to [0,\epsilon_P D/3]$ denote the function which sends $s$ to $|X_s - P|$. Now $f_P$ is an increasing and surjective piecewise linear function, strictly increasing on the preimage of $(0,\epsilon_P D/3]$, but it is possible that $f_P$ sends a nontrivial interval of parameters $[\hat{s}_P,x]$ to $0$. In fact, this happens exactly in the case where there are multiple curves $L_s$ that intersect at $P$.

The idea now is the following. We pick a strictly increasing surjective piecewise linear function $f_P^* : [\hat{s}_P,s_P] \to [0,\epsilon_P D/3]$ to act as an injective replacement for $f_P$. We wish to make a canonical choice here so for an increasing surjective function $f_P : [0,1] \to [0,1]$ for which $f^{-1}(\{0\}) = [0,A]$ we set
\[f_P^*(x) =
\left\{
	\begin{array}{ll}
		x/(2A)  & \mbox{when } x \in [0,A], \\
		(f_P(x) + 1)/2 & \mbox{otherwise}.
	\end{array}
\right.\]
The way we will modify each curve $L_s$ for $s \in [\hat{s}_P,s_P]$ is by moving the point $X_s$ on $L_s$ to a new point $X_s^*$ on $PV_P$ so that $|X_s^* - P| = f_P^*(s)$.

If $s_P < \hat{s}_P$, we do the exact same process as above only on the interval $[s_P,\hat{s}_P]$ on which the analogously defined function $f_P$ will be decreasing instead of increasing. Similarly we choose $f_P^*$ as a strictly decreasing function.

We now define the curves $L_s^*$. For each curve $L_s$, we make note of all the segments $PV_P$ which this curve passes through. We only consider segments with $s_P \neq \hat{s}_P$ as to neglect cases where the segment $PV_P$ is fully on $L_s$. On each of the applicable segments $PV_P$ intersecting $L_s$ we move the point $X_s$ on the curve $L_s$ to $X_s^*$. 
Note that the curves $L_{s_P}$ and $L_{\hat{s}_P}$ are unchanged with respect to this process (although they may be changed on other segments $QV_Q$).\\\\

\begin{figure}[H]\label{fig:modify1}
\includegraphics[scale=0.5]{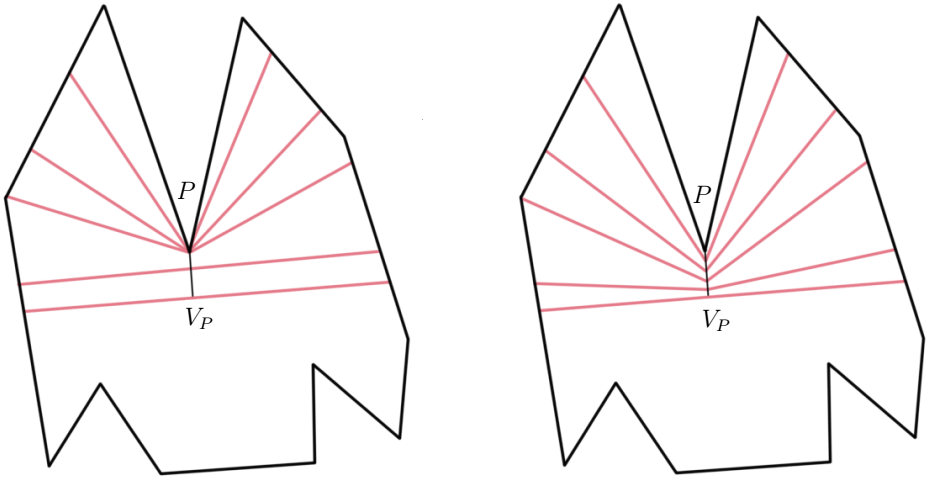}
\caption{Modifying the curves $L_s$ on the segment $PV_P$.}
\end{figure}
\noindent
\emph{Step 1.} Proving that the curves $L_s^*$ do not intersect $\partial \yy$ except at their endpoints.\\\\
Fix $s$ and consider the curve $L_s$. For each vertex $P$ of $\partial \yy$, we consider the segments $P V_P$ which are disjoint. Considering the intersection points of $L_s$ with all such segments $PV_P$, this splits the curve $L_s$ into segments $Q_0 Q_1, Q_1 Q_2, \ldots , Q_{N-1}Q_N$ so that $Q_0, Q_N$ are the endpoints of $L_s$ and for each $Q_j$, there is a point $P_j$ which is a vertex of $\partial \yy$ so that $Q_j \in P_j V_{P_j}$. Moreover, we assume that there are no other such points on $L_s$.

Consider now a segment $Q_j Q_{j+1}$ with $0 < j < N-1$. During the deformation from $L_s$ to $L_s^*$, the point $Q_j$ is moved on the segment $P_j V_{P_j}$ to another point $Q_j^*$. Suppose for the contrary that the segment $Q_j^* Q_{j+1}^*$  intersects the boundary $\partial \yy$. Let $Q_j^r = (1-r)Q_j + rQ_j^*$. As neither $Q_j^*$ or $Q_{j+1}^*$ intersect $\partial \yy$, there must be a minimal number $0 < r < 1$ so that $Q_j^r Q_{j+1}^r$ intersects $\partial \yy$. We now consider two cases:
\begin{enumerate}
\item If a vertex $P$ of $\partial\yy$ intersects $Q_j^r Q_{j+1}^r$. Basic geometry dictates that such a vertex $P$ cannot share a side with $P_j$ or $P_{j+1}$. If $P$ equals $Q_j^r$ or $Q_{j+1}^r$, this contradicts the definition of $D$ as then the distance from $P$ to either $P_j$ or $P_{j+1}$ would be too small, seeing as $|Q_j^r - P_j| \leq D/3$ holds for all $j$ and $r$ due to $Q_j^r \in P_jV_{P_j}$. If $P$ is strictly between $Q_j^r$ and $Q_{j+1}^r$, then again a simple geometrical argument shows that there must be a non-endpoint of $Q_j Q_{j+1}$ which is on $PV_P$, a contradiction with the definition of the points $Q_j$.
\item If a point $X$ of $\partial \yy$ which is not a vertex intersects $Q_j^r Q_{j+1}^r$. We obtain a similar contradiction as above if $X$ is either of $Q_j^r$ or $Q_{j+1}^r$. In the case where $X$ is strictly inside $Q_j^r Q_{j+1}^r$, the segment of $\partial \yy$ on which $X$ is on must be parallel to $Q_j^r Q_{j+1}^r$. But for any two segments which are parallel and intersect each other, one must contain an endpoint of the other one. Thus this reduces to one of the cases already considered.
\end{enumerate}
\emph{Step 2.} Proving that the curves $L_s^*$ do not intersect each other.\\\\
If two of the curves $L_s^*$ and $L_{s'}^*$ intersected each other with $s < s'$. Then for all $r \in (s,s')$ the curve $L_r^*$ would also necessarily intersect both $L_s^*$ and $L_{s'}^*$ or either it would provide a separation between them. But for $r$ close enough to $s$, the curves $L_r^*$ and $L_s^*$ may not intersect. This is due to the fact that these curves may be decomposed into the same number of segments $I_j^r$ and $I_j^s$, $j = 1,\ldots,N$, and so that $I_j^r \to I_j^s$ as $r \to s$. This convergence implies that for $r$ close enough to $s$, the segment $I_j^r$ cannot intersect $I_{j'}^s$ unless $j' \in \{j-1,j,j+1\}$. However, even in this case these segments may not intersect due to geometrical reasons, as the nature of the construction guarantees that $I_j^r$ and $I_j^s$ do not intersect.\\\\
\emph{Step 3.} Uniform estimates in $t$.\\\\
During the construction made in Section \ref{sec:3d}, we have created an extension $h:[0,1]^3 \to [0,1]^3$ of the boundary map $\varphi$ so that each level $[0,1]^2 \times \{t\}$ is mapped to $\R^2 \times \{t\}$ . For each $t$, such a level is divided into a number (depending on $t$) of dyadic squares whose boundaries are mapped to piecewise linear Jordan curves by $h$ on the target side. Moreover, inside these squares the map $h$ is defined by the shortest curve extension of its boundary values. For each dyadic level, there is a specific parameter $t$ at which the construction changes from being based on those dyadic squares to being based on their children. The exact behaviour of $h$ at this parameter was described in Step 5 of Section \ref{sec:3d} at the parameter $t = mid$ in the cube $U_{k,j}$. We let the sequence of such parameters be denoted by $t_1 > t_2 > t_3 > \ldots$ corresponding to each dyadic level.

We first describe how to modify the extension $h$ inside each interval $I_j = (t_{j+1},t_j]$ without paying mind to the continuity between successive intervals. We focus now on a single dyadic square $\tilde{Q}_{k,j} \times \{t\}$ on the domain side and its target set, which we interpret as a planar Jordan domain $\yy_t$ with piecewise linear boundary. We may apply continuity and the fact that there is an upper bound on the number of vertices of each piecewise linear curve to deduce that the quantity $D$ as defined earlier on $\yy_t$ has a uniform lower bound for $t \in I_j$. Here the quantity $D$ and all other quantities introduced in the earlier description of the construction need to be interpreted as functions of $t$.

We now appeal to the behaviour of the piecewise linear curve $\partial \yy_t$. In a neighbourhood of parameters $t$ where the number of vertices of $\partial \yy_t$ is constant the domain $\yy_t$ changes in $t$ only by moving these vertices around in a continuous way. There is hence a correspondence between the segments $PV_P$ in $t$ in this neighbourhood and thus a necessary step to guarantee continuity of the modified extension is to ensure that the length of each such segment is a continuous function in $t$. This length of $PV_P$ was defined as $\epsilon_P D/3$. Since $D$ is locally bounded from below in $t$, $\epsilon_P$ can be chosen for each $t$ in such a way as to make $\epsilon_P D$ a continuous function in $t$ in such a neighbourhood. In fact, we choose $\epsilon_P D$ to be a piecewise linear function to maintain Lipschitz-continuity in $t$ as well (we pay proper attention to estimates later). Another case to account for are shortest curves $L_s$ which completely contain a segment $PV_P$. This happens exactly when the shortest curve $L_{\hat{s}_P}$ with endpoint $P$ bisects the angle of the boundary at $P$. In such a case no other curve $L_s$ may pass through $P$, which allows us to set $\epsilon_P = 0$ at any parameter $t$ where this happens without losing injectivity. This can be done while maintaining the continuity of $\epsilon_P D$ in $t$, for example by multiplying an already chosen function $\epsilon_P(t)$ with a (piecewise linear) function $G(t)$ for which $G(t) \in [0,1)$ and $G(t) = 0$ exactly for those parameters $t$ for which $L_{\hat{s}_P}$ contains $PV_P$.

The number of vertices of $\partial \yy_t$ does not generally remain constant, as there may be new vertices appearing from an edge turning into two edges via a new angle being created at a given point $P$ on that edge. The reverse may also happen to reduce the vertex count by one, but for the purposes of proving continuity both of these cases are symmetric to each other. Let us hence assume that at time $T_0$ the point $P = P(T_0)$ lies on an edge of $\partial \yy_{T_0}$, but on the interval $(T_0,T_1)$ the point $P(t)$ is a true vertex of $\partial \yy_t$. In this case we do as before on $(T_0,T_1)$, choosing $\epsilon_P D$ to be continuous in terms of $t$. Moreover, we choose $\epsilon_P$ in such a way that $\epsilon_P D \to 0$ as $t \to t_0$. This means that the segment $PV_P$ shrinks to a point as $t \to T_0$, which guarantees continuity at this point also.

For a fixed parameter $t$, it is clear that as the numbers $\epsilon_P$ are chosen uniformly small enough, for example, by multiplying each with a small constant $\delta_t > 0$ independent of $P$, the modified extension $H^*_\varphi$ is arbitrarily close to the original extension $H_\varphi$ in the Lipschitz norm. Moreover as the quantities $\epsilon_P D$ were chosen to be Lipschitz continuous, choosing $\delta_t$ as a piecewise linear function in $t$ with small enough Lipschitz norm guarantees that the map $(z,t) \to H^*_{\varphi_t}(z)$ may be chosen arbitrarily close to the original map $h$ in the Lipschitz norm for $t \in (t_{k+1},t_k]$. This shows that the Lipschitz estimates obtained in the previous section may be inherited by the modified extension as well.

Finally, we address the case of the parameters $t_k$ where we switch from one dyadic level to another ($t = mid$ in $U_{k,j}$). We pick a parameter $t_k^* < t_k$ slightly below $t_k$ so that on the level $t_k^*$ the extension $h$ is given by the shortest curve extension in the four dyadic children instead. Choosing $t_k^*$ close enough to $t_k$ lets us assume that the two maps levels $t_k^*$ and $t_k$ are arbitrarily close to each other in the Lipschitz norm. Moreover, due to this we may assume that the two modified maps are also as close in the Lipschitz norm as we want. For the sake of this argument we interpret these modified maps as planar maps $h_{t_k}, h_{t_k^*} : S \to \yy$ from a square to a piecewise Lipschitz Jordan domain, and recall that they have the same boundary values. As both of these maps are piecewise linear and homeomorphic, for $t_k^*$ close enough to $t_k$ we may assume that each of the maps $h^{(\tau)} := (1-\tau) h_{t_k} + \tau h_{t_k^*}$ is also homeomorphic for $\tau \in [0,1]$ due to the fact that the Jacobian determinant of $h^{(\tau)}$ must be bounded away from zero for all $\tau$ when $h_{t_k}$ and $h_{t_k^*}$ are close enough in the Lipschitz norm.

We may then redefine the extension for parameters $t \in [t_k^*,t_k]$ by setting it equal to $h^{(\tau)}$ for $\tau = (t-t_k)/(t_k^*-t_k)$. Note that the Lipschitz norm in $t$ may now be very large here due to the fact that the denominator $t_k^* - t_k$ may be arbitrarily small. To fix this, we rescale the parametrization on the interval $(t_{k+1},t_k]$ on the domain and target side so that if $M$ denotes the midpoint of this interval, we scale $(t_k^*,t_k]$ to $(M,t_k]$ and $(t_{k+1},t_k^*]$ to $(t_{k+1},M]$. The length of the interval $(M,t_k]$ is hence comparable to $2^{-k}$, which means that the Lipschitz constant of the map for parameters $t \in (M,t_k]$ on $U_{k,j}$ is controlled by $2^k\hat{\Gamma}_{k,j}$ as we have wanted. This finishes the construction and the proof.

\end{document}